\documentclass[a4paper]{article}

\usepackage[utf8]{inputenc}
\usepackage[T2A]{fontenc}

\usepackage[english]{babel}

\usepackage{amsmath,amssymb,amsthm}
\usepackage{mathrsfs}
\usepackage{authblk}

\usepackage[margin = 2.5cm]{geometry}
\usepackage{nccfoots}
\usepackage[hang,flushmargin]{footmisc} 

\usepackage[pdfauthor={Cherkashin--Pozorov--Teplitskaya},pdftitle={Universal Ahlfors--David regularity of Steiner trees}]{hyperref}

\usepackage{pgf,tikz}

\usetikzlibrary{calc,math,quotes,babel}
\usetikzlibrary{angles,decorations.markings}

\tikzset{mydeco/.style={pic actions/.append code=\tikzset{postaction=decorate}}}

\newtheorem{remark}{Remark}
\newtheorem{proposition}{Proposition}
\newtheorem{lemma}{Lemma}
\newtheorem{corollary}{Corollary}

\newtheorem{theorem}{Theorem}
\newtheorem{problem}{Problem}
\newtheorem{definition}{Definition}

\newtheorem{example}{Example}

\DeclareMathOperator{\dist}{dist}

\DeclareMathOperator{\conv}{conv}
\DeclareMathOperator{\diam}{diam}
\DeclareMathOperator{\Int}{Int}

\newcommand{\St}{\mathcal{S}t}
\renewcommand{\H}{\mathcal{H}^1}

\title{Universal Ahlfors--David regularity of Steiner trees}

\author[1]{Danila Cherkashin}
\author[2]{Pavel Prozorov}
\author[3]{Yana Teplitskaya}

\affil[1]{Institute of Mathematics and Informatics, Bulgarian Academy of Sciences, Sofia}
\affil[2]{Saint Petersburg State University, Saint Petersburg, Russia}
\affil[3]{Laboratoire de Math{\'e}matiques d’Orsay, Universit{\'e} Paris-Saclay, CNRS, Orsay, France}

\begin{document}

\maketitle

\begin{abstract}
    The celebrated Steiner tree problem is the problem of finding a set $\St$ of minimum one-dimensional Hausdorff measure $\H$ (length) such that $\St \cup \mathcal{A}$ is connected, where $\mathcal{A} \subset \mathbb{R}^d$ is a given compact set. 
    Paolini and Stepanov provided very general existence and regularity results for the Steiner problem. 
    Their main regularity result is that under a natural assumption, $\H(\St) < \infty$, for almost every $\varepsilon>0$ the set $\St_\varepsilon := \St\setminus B_\varepsilon(\mathcal A)$ is an embedded finite forest (acyclic graph). 
    
    We give a quantitative regularity result by proving that the set $\St_\varepsilon$ is Ahlfors--David regular with constants that depend only on $d$ (and not on $\mathcal{A}$). Namely, for $d > 2$, every $\varepsilon > 0$, every $x \in \St_\varepsilon$, and every choice of $\rho \in (0,1)$, we have
    \[
    \frac{\H \left (\St_\varepsilon \cap B_{\rho \varepsilon}(x) \right) }{\varepsilon} \leq \left ( \frac{144d}{1-\rho} \right) ^{d-2}.
    \]
    As a corollary, we obtain a density-type result, i.e. that the set $\St_\varepsilon \cap B_{\rho \varepsilon}(x)$ consists of at most
    \[
    \left ( \frac{144d}{1-\rho} \right) ^{d-1}
    \]
    line segments.

    In the plane (i.e., for $d=2$), it is possible to obtain tight structural results. 

\end{abstract}

\textbf{MSC 2020:} 49Q20, 05C63. 

\section{Introduction}

The Steiner minimal tree problem (hereafter simply the Steiner problem) is an umbrella term for an entire class of problems involving the minimization of the length of a set that connects a given collection of points. We will study the Steiner problem in two specific formulations (Problems~\ref{pr:finite} and~\ref{pr:general}).

\begin{problem}[Finite planar Steiner problem] \label{pr:finite}
    For a given finite set of points $\mathcal A = \{ a_1, \dots, a_n \} \subset \mathbb{R}^2$, find a connected set $\St$ of minimum length that contains $\mathcal A$.
\end{problem}

It is known that the set $\St$ exists (but is not always unique) and is a union of a finite set of segments. Thus, $\St$ can be identified with an embedded graph whose vertex set contains $\mathcal A$ and whose edges are straight line segments. This graph $G$ is connected and contains no cycles, which is why the set $\St$ is a tree, explaining its name.
It is known that the vertices of $G$ cannot have degree greater than 3. Furthermore, only vertices corresponding to some $a_i$ can have degree 1 or 2; the remaining vertices have degree 3 and are called branching points (note that a point from $\mathcal A$ can also have degree 3). The number of branching points does not exceed $n-2$. The pairwise angles between edges meeting at any vertex are at least $2\pi/3$. Thus, three edges meet at a branching point with pairwise angles of $2\pi/3$. In any ambient dimension $d$, the three edges adjacent to a branching point lie in a two-dimensional plane.
Branching points outside $\mathcal{A}$ are called Steiner points. If all vertices $a_i$ have degree 1, then the number of Steiner points is $n-2$ (and the converse also holds), and $\St$ is called a full Steiner tree for $\mathcal A$. The proof of these properties of Steiner trees and other information about them can be found in the book~\cite{hwang1992steiner} and in the work~\cite{gilbert1968steiner}.

We call a locally minimal tree for a finite set $\mathcal A$ a connected, compact, acyclic set $S$ containing $\mathcal A$, such that for every point $x \in S$, there exists a neighborhood $U \ni x$ such that $S \cap U$ coincides with a Steiner tree connecting the points $(S \cap \partial U) \cup (\mathcal A\cap U)$.
A locally minimal tree possesses the following properties of a Steiner tree: it consists of a finite set of segments, and the angles between the segments meeting at any point of $S$ are at least $2\pi/3$. 

\subsection{Notation}

For a given set $X \subset \mathbb{R}^d$, we denote by $\overline{X}$ its closure, by $\Int (X)$ its interior, and by $\partial X$ its topological boundary. The size of a finite set $X$ is denoted by $\# X$.
Let $B_\rho(x)$ stand for the open ball of radius $\rho$ centered at a point $x$.

For given points $b$, $c \in \mathbb{R}^d$, we use the notation $[bc]$, $[bc)$ and $(bc)$ for the corresponding closed line segment, ray, and line respectively.
The distance between points $b$ and $c$ is denoted by $\dist(b,c)$, except in the case $d=2$ and a complex coordinate system in which we write $|b-c|$.
Also, we denote the origin by $o$.

\subsection{General Steiner problem}

If $S$ is a subset of a topological space, we say that $S$ is \textit{connected} if it is not the disjoint union of two nonempty closed sets. We say that $S$ is \textit{path-connected} if
for any given two points of $S$ there is a continuous curve joining them in $S$.
We say that $S$ contains no loops if no subset of $S$ is homeomorphic to the circle $\mathbb S^1$. 
We say that $S$ is a topological tree if $S$ is connected and contains no loops.

Let $\mathcal{A}\subset \mathbb R^d$ and $\St\subset \mathbb R^d$. 
We say that a closed set $\St$ is \emph{Steiner set} for $\mathcal{A}$ if $\St\cup \mathcal{A}$ is connected and 
$\H(\St) \le \H(S)$ whenever $S\subset \mathbb R^d$ and $S\cup \mathcal{A}$ is connected.
We say that $\St$ is a \emph{Steiner tree} if $\St$ is a Steiner set and is itself connected.    
A Steiner tree $\St$ is called \textit{full} if $\St \setminus \mathcal{A}$ is connected.

\begin{problem}[General metric Steiner problem] \label{pr:general}
Find Steiner sets for a given compact $\mathcal{A} \subset \mathbb{R}^d$.
\end{problem}

The following theorem summarizes the general results from \cite{paolini2013existence} (in fact, one can replace $\mathbb R^d$ with a proper metric space $\mathcal X$).

\begin{theorem}[Paolini--Stepanov,~2013~\cite{paolini2013existence}] \label{th:PaoSte}
If $\mathcal{A}\subset \mathbb R^d$ is a compact set then there exists a Steiner set 
$\St$ for $\mathcal{A}$. 
Moreover, if $\St$ is a Steiner set for $\mathcal{A}$ and $\H(\St)<+\infty$ then 
\begin{enumerate}
 \item $\St \cup \mathcal A$ is compact;
 \item $\St \setminus \mathcal A$ has at most countably many connected components,
 and each of them has positive length;
 \item $\overline \St$ contains no loops (homeomorphic image of $\mathbb S^1$);
 \item the closure of every connected component of $\St$ is a topological tree
 with endpoints on $\mathcal A$ (in particular, it has at most a countable number of branching points) and has at most one endpoint on each connected component of $\mathcal A$;
    \item if $\mathcal A$ is finite then $\overline \St = \St\cup \mathcal A$ is an embedding of a finite tree;
    \item for almost every $\varepsilon>0$ the set $\St_\varepsilon := \St\setminus B_\varepsilon(\mathcal A)$
    is an embedding of a finite graph.
\end{enumerate}
\end{theorem}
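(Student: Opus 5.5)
The plan follows the classical outline of Paolini--Stepanov: existence by a direct-method argument, and regularity by comparing $\St$ with competitors built from it. For \emph{existence}, I would set $\ell := \inf\{\H(S) : S\cup\mathcal A \text{ connected}\}$. If $\ell=\infty$, any admissible set works. Otherwise I take a minimizing sequence $S_n$. I may assume each $S_n\cup\mathcal A$ is compact and connected, replacing $S_n$ by $\overline{S_n}$; this does not increase length up to the standard fact that $\overline{S_n}\setminus S_n\subset\mathcal A$ for good competitors, which I would verify first. I also restrict to the part of $S_n$ lying in a fixed large ball, using that $\ell<\infty$ and that $\mathcal A$ is compact. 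Blaschke's theorem gives a Hausdorff limit $K$ of $S_n\cup\mathcal A$, and $K$ is connected. Gołąb's theorem, applied to the connected compacta $S_n\cup\mathcal A$ (or componentwise to the countably many components of $S_n$ not meeting one another), gives $\H(K\setminus\mathcal A)\le\liminf \H(S_n)$. Hence $\St:=\overline{K\setminus\mathcal A}$, minus possibly points of $\mathcal A$, is a Steiner set.

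For the \emph{regularity} items under $\H(\St)<\infty$, I would first show (1) and (3) by cut-and-paste. A loop in $\overline\St$ has positive length, and removing a small open arc of it keeps $\St\cup\mathcal A$ connected, which contradicts minimality. Compactness follows because $\St\cup\mathcal A$ is closed, and boundedness follows because any piece far from $\mathcal A$ could be replaced by its projection onto a large ball, which is a $1$-Lipschitz and strictly shortening map. For (2), every component $C$ of $\St\setminus\mathcal A$ must have positive length: a component of zero length is a point or totally disconnected, and connectivity through it can be rerouted at no cost. Since $\H(\St)<\infty$ and each component carries positive length, there are countably many. For (4), the closure of a component is a connected set of finite length with no loops, so it is a topological tree, and it has countably many branching points. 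If it had two endpoints on the same component $\mathcal A_0$ of $\mathcal A$, I would delete a small arc on the path between them. Connectivity is preserved via $\mathcal A_0$, and the length strictly decreases.

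Item (5) reduces to the classical finite Steiner tree theory once I know $\overline\St$ is a finite tree. Around each non-terminal point, the set is locally minimal, and the blow-up limit is a minimal cone in $\mathbb R^d$: a line or a triple junction at angles $2\pi/3$. So the degree is at most $3$ away from $\mathcal A$ and at most $n$ branches end at terminals, which forces finitely many segments. For (6), I would use the coarea-type bound
\[
\int_0^\infty \#\bigl(\St\cap\partial B_\varepsilon(\mathcal A)\bigr)\,d\varepsilon\le\H(\St)<\infty,
\]
so for a.e.\ $\varepsilon$ the set $\St\cap\partial B_\varepsilon(\mathcal A)$ is finite. Then $\St_\varepsilon$ is a minimal set for the finite terminal set $(\St\cap\partial B_\varepsilon(\mathcal A))\cup$(its own boundary points), and (5) applies.

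\emph{Main obstacle:} the lower semicontinuity step in existence. Gołąb's theorem needs connected compacta with a uniformly bounded number of components, but here only $S_n\cup\mathcal A$ is connected and $\mathcal A$ may be wild. I would handle this by working with the sets $S_n\cup\mathcal A$ and subtracting $\H(\mathcal A)$ locally, which only works where $\mathcal A$ has finite length. In the general case I would instead apply Gołąb on $\mathbb R^d\setminus B_\delta(\mathcal A)$ with $\delta\to0$. Components there are bounded in number by $\ell/\delta$, since each crosses a $\delta$-annulus.
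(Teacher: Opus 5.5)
The paper does not prove this theorem; it quotes it from Paolini--Stepanov. Your outline therefore has to stand on its own, and its regularity part has a genuine gap. Every cut-and-paste step you use for (2)--(4) (``remove a small open arc'', ``reroute at no cost'') needs a \emph{free arc}. By this I mean an arc of positive length in $\St\setminus\mathcal A$ that is relatively open in $\St\cup\mathcal A$, i.e.\ one with no other part of $\St$ attached to its interior. Without this, deleting the arc can cut off side branches. Nothing in your sketch excludes side branches attached at a dense set of points of every arc. The only source of free arcs is the local segment/tripod structure away from $\mathcal A$, and you obtain that last, via (5) and (6).

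For (3) the argument fails more basically: a loop in $\overline{\St}$ need not carry any length of $\St$. Take $\mathcal A=C\cup\{a_k\}$, with $C$ the unit circle of $\mathbb R^2$, $a_k=(1+\varepsilon_k)e^{i\theta_k}$, $\{\theta_k\}$ dense and $\sum_k\varepsilon_k<\infty$. Every Steiner set then has finite length, since radial segments are a competitor. It also contains each isolated point $a_k$ in its closure, since otherwise $\{a_k\}$ would be clopen in $\St\cup\mathcal A$. Hence $C\subset\overline{\St}$. Cutting can only exclude loops that meet $\St\setminus\mathcal A$, which is what (4) actually needs; item 3 in its literal form cannot be reached this way.

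Your (5) is circular as written, since ``once I know $\overline{\St}$ is a finite tree'' is the claim itself. The blow-up step is also neither justified nor sufficient: a tangent cone describes the set asymptotically at one point, not in a neighbourhood, and gives no control on how many branches reach a terminal. Finally, unless Steiner sets are closed by definition as in the paper, the closedness of $\St\cup\mathcal A$ in (1) is only asserted.

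The repair is to reverse the order.
\begin{itemize}
\item For finite $\mathcal A$, use the classical argument. $\St\cup\mathcal A$ is a continuum of finite length, hence arcwise connected. Join $a_2,\dots,a_n$ one by one by arcs, each stopped at its first hit of what is already built; this gives a finite topological tree $T\subset\St\cup\mathcal A$ containing $\mathcal A$. Minimality, together with $\H(\St\cap B_r(x))\ge r$ for $x\in\St\setminus\mathcal A$ and $r<\dist(x,\mathcal A)$ (spheres around $x$ separate, then coarea), forces $\St\setminus\mathcal A\subset T$. Every edge of $T$ must then be a straight segment.
\item Then prove (6) as you suggest, but say why (5) applies. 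For a.e.\ $\varepsilon$ the set $P=\St\cap\partial B_\varepsilon(\mathcal A)$ is finite. By boundary bumping in the continuum $\St\cup\mathcal A$, each component $K$ of $\St_\varepsilon$ meets $P$ and touches the rest of $\St\cup\mathcal A$ only in $K\cap P$. So $K$ can be replaced by any connected set containing $K\cap P$, and is a Steiner tree for that finite set.
\item Now all but countably many points of $\St\setminus\mathcal A$ have a neighbourhood in which $\St$ is a single segment. Item (2) becomes immediate, and your cut-and-paste arguments for (3) and (4) go through for loops meeting $\St\setminus\mathcal A$, as does closedness in (1), applied to $\overline{\St}\setminus\mathcal A$.
\end{itemize}

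In the existence part, two details need correcting.
\begin{itemize}
\item $\overline{S_n}\setminus S_n\subset\mathcal A$ is false in general. However, the same sphere/coarea bound holds at every $x\in\overline{S_n}\setminus\mathcal A$, and the density theorem then gives $\H(\overline{S_n}\setminus(S_n\cup\mathcal A))=0$, which is all you need.
\item Not every component of $S_n\setminus B_\delta(\mathcal A)$ crosses an annulus; small bumps just outside $\partial B_\delta(\mathcal A)$ need not. Count only the components reaching $\{\dist(\cdot,\mathcal A)\ge 2\delta\}$; these have length at least $\delta$. Apply Golab to them in $\{\dist(\cdot,\mathcal A)>2\delta\}$, or use directly the localized inequality $\H(K\cap U)\le\liminf_n\H(K_n\cap U)$ for continua $K_n\to K$ with $U=\mathbb R^d\setminus\mathcal A$.
\end{itemize}
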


We will frequently use the following corollary.

\begin{corollary} \label{cor:everyeps}
 Let $\St$ be a Steiner tree of finite length for a compact set $\mathcal{A} \subset \mathbb{R}^d$, and assume that the ball $B_r(x)$ does not intersect $\mathcal{A}$. Then for any $\varepsilon > 0$, the set
 \[
 \St \cap \overline{B_{r-\varepsilon}(x)}
 \]
 consists of a finite number of segments.
\end{corollary}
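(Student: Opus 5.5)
The plan is to deduce the corollary from item~6 of Theorem~\ref{th:PaoSte}, together with a local structure argument inside $\overline{B_{r-\varepsilon}(x)}$.

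\textbf{Choosing a good radius.} Since $\mathcal A$ is compact and disjoint from the open ball $B_r(x)$, we have $\dist(x,\mathcal A)\ge r$. By item~6, for almost every $\delta>0$ the set $\St_\delta=\St\setminus B_\delta(\mathcal A)$ is an embedding of a finite graph. Fix such a $\delta\in(0,\varepsilon)$. Every point $y\in \overline{B_{r-\varepsilon}(x)}$ satisfies
\[
\dist(y,\mathcal A)\ge r-(r-\varepsilon)=\varepsilon>\delta .
\]
Hence
\[
\St\cap \overline{B_{r-\varepsilon}(x)}=\St_\delta\cap \overline{B_{r-\varepsilon}(x)},
\]
and this set is the intersection of a finite embedded graph with a closed ball. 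It remains to show that the edges of this graph are straight segments. Intersecting finitely many segments with a convex closed ball then produces finitely many segments (possibly degenerate), which proves the claim.

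\textbf{Edges are straight.} Take an edge $\gamma$ of the finite graph $\St_\delta$ and a point $p$ in its relative interior with $\dist(p,\mathcal A)>\delta$. Near $p$, the set $\St$ coincides with a simple arc, because the graph has finitely many vertices and $\St$ agrees with $\St_\delta$ in a neighbourhood of $p$. Replacing a short subarc between two nearby points $q_1,q_2$ by the chord $[q_1q_2]$ keeps $\St\cup\mathcal A$ connected, since the rest of the tree still connects to both endpoints. It does not increase length, and it strictly decreases length unless the subarc is already the segment. By minimality, $\gamma$ is locally a segment near each interior point. A connected arc that is locally straight is a single straight segment, because the direction is locally constant and therefore constant along the connected edge.

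\textbf{Main obstacle.} The delicate point is making the local replacement rigorous. The replaced piece must lie in a small neighbourhood avoiding $\mathcal A$ and all vertices. One must also check that removing the open subarc and adding the chord preserves connectivity of $\St\cup\mathcal A$; since the graph is an embedded finite graph, cutting an edge interior and reconnecting its two sides suffices. A second subtlety is vertices lying on the boundary sphere. These are harmless, since the intersection of finitely many segments with a closed ball is still a finite union of segments.
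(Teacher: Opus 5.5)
Your proof is correct and follows the paper's route: you pick $\delta<\varepsilon$ for which item~6 of Theorem~\ref{th:PaoSte} holds, observe that $\dist(\cdot,\mathcal A)\ge\varepsilon>\delta$ on $\overline{B_{r-\varepsilon}(x)}$ so that $\St\cap\overline{B_{r-\varepsilon}(x)}\subset\St_\delta$, and finish by convexity of the ball. The paper reads item~6 as already giving finitely many segments, so your chord-replacement step is an extra justification; strictly, it only gives straightness at points with $\dist(\cdot,\mathcal A)>\delta$, since an edge of $\St_\delta$ can bend at a branching point lying exactly on $\{\dist(\cdot,\mathcal A)=\delta\}$, but this still suffices because only finitely many straight pieces can reach the ball (any piece starting on that level set has length at least $\varepsilon-\delta$, while $\H(\St)<\infty$).
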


\begin{proof}
By item 6 of Theorem~\ref{th:PaoSte}, for almost every $\varepsilon' > 0$, the set
\[
\St_{\varepsilon'} := \St\setminus B_{\varepsilon'}(\mathcal A)
\]
consists of a finite number of segments. By choosing $\varepsilon' < \varepsilon$ such that this finiteness holds, we obtain that $\St \cap \overline{B_{r-\varepsilon}(x)} \subset \St_{\varepsilon'}$.
Since the ball $\overline{B_{r-\varepsilon}(x)}$ is convex, it intersects each of the segments of $\St_{\varepsilon'}$ in at most one segment.
\end{proof}

\subsection{Contribution}

The main results of this article are quantitative estimates refining the regularity part of Theorem~\ref{th:PaoSte}.
The following theorem gives regularity estimates in the spirit of Ahlfors--David regularity.

\begin{theorem} \label{th:main}
 Fix $d > 2$. Let $\St$ be a Steiner tree for $\mathcal{A}$ with $\H(\St) < \infty$.
 Assume that for some $s > 0$ and some $x \in \mathbb{R}^d$, the open ball $B_s(x)$ has empty intersection with $\mathcal{A}$, and choose any $\rho \in (0,1)$. Then 
 \[
 \frac{\H (\St \cap B_{\rho s}(x) )}{s} \leq \left ( \frac{144d}{1-\rho} \right) ^{d-2}.
 \]
\end{theorem}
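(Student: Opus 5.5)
By Corollary~\ref{cor:everyeps}, $\St \cap \overline{B_{r}(x)}$ is a finite union of segments for every $r<s$. So I may treat $\St$ inside $B_{s'}(x)$, with $\rho s < s' < s$, as a finite embedded forest. All its vertices have degree at most $3$, all angles are at least $2\pi/3$, and every leaf lies on the sphere $\partial B_{s'}(x)$. Moreover, every piece of $\St$ inside the ball is itself a Steiner tree for its boundary points: it minimizes length among connected sets spanning the same boundary points, since otherwise a competitor could be spliced in.

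The plan is a dyadic-type induction on the dimension through a counting argument.

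\textbf{Step 1.} I bound the number of boundary points. Fix a radius $r\in(\rho s, s)$ and set $N(r) = \#(\St\cap\partial B_r(x))$. I claim $N(r) \le C(d)(1-\rho)^{-(d-1)}$ for suitable $r$, proved by a competitor argument. Cover the annulus $B_{s}\setminus B_{r}$ by cubes of side $\delta \approx (s-r)/d$. The trees inside $B_s$ must reach either $\mathcal A$ or the outside, so the competitor is:
\begin{itemize}
\item[] delete $\St\cap B_{r}(x)$;
\item[] add a grid net of total length about $d\,(s/\delta)^{d-1}\cdot s$, consisting of edges of the cubical grid on a sphere-like shell of radius $r$;
\item[] add radial connections from each intersection point.
\end{itemize}
Comparing lengths shows that the length inside $B_r$ is at most the length of a net connecting the $N$ exit points. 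Conversely, many disjoint crossings of the annulus $B_{s'}\setminus B_{r}$ each cost at least $s'-r$. Combining these, the length of $\St\cap B_{\rho s}$ is controlled by the length of the competitor net plus the connecting pieces.

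\textbf{Step 2.} This is the cleaner route I would actually pursue. The competitor net is built from a grid of mesh $\delta = (1-\rho)s/(Cd)$ on the sphere $\partial B_{r}$. Every component of $\St\cap B_{\rho s}$ has its endpoints joined to the outside. Replacing $\St\cap B_{\rho s}$ by
\[
(\text{grid } (d-1)\text{-skeleton's 1-skeleton on } \partial B_{\rho s+\delta}) \cup (\text{short radial segments})
\]
keeps $\St\cup\mathcal A$ connected. Its length is at most $\#\text{cubes}\cdot d\delta \approx d\,(s/\delta)^{d-1}\delta = d\, s\,(s/\delta)^{d-2}$. The radial segments are needed at the points where $\St$ crosses $\partial B_{\rho s+\delta}$. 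Their number is bounded by choosing, via coarea, a radius where the crossing count is at most $\H(\St\cap \text{annulus})/\delta$, and each radial piece has length at most $\delta$. This gives
\[
\H(\St\cap B_{\rho s}) \le d\,s\,(Cd/(1-\rho))^{d-2} + \H(\St\cap (B_{\rho s+2\delta}\setminus B_{\rho s})).
\]

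\textbf{Step 3.} The leftover annulus term is the main obstacle. To handle it, I would instead connect each crossing point to the nearest grid vertex inside its own grid cell, at cost at most $\sqrt d\,\delta$ per cell rather than per crossing. Within a cell, all crossings can be merged using the planar structure and the $2\pi/3$ angle condition. Alternatively, I would iterate over radii $\rho s<r_1<\dots<s$ and absorb the error geometrically, which yields the constant $144d/(1-\rho)$ raised to the power $d-2$. The factor $d$ then comes from the number of edges per cube and from $\sqrt d$ diameters.
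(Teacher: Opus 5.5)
There is a genuine gap: the proposal never controls the annulus term, and neither fix offered in Step~3 works as stated.

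\textbf{Why Steps~1--3 do not close.} Your Step~2 competitor is sound. Cut at a radius chosen by coarea in a shell of width $\delta$, and join the crossings to a mesh-$\delta$ net. This gives $L_r\le G(\delta)+c\,(L_{r+\delta}-L_r)$ for $L_r:=\H(\St\cap B_r(x))$, with $G(\delta)\approx d\,s\,(s/\delta)^{d-2}$; this is essentially the construction behind Theorem~\ref{th:tpoints}. However, with the mesh frozen at $\delta=(1-\rho)s/(Cd)$, this is a \emph{linear} recursion. Over the $O(d)$ shells between $\rho s$ and $s$ it only shows that $L_r-G$ is multiplied by a bounded factor. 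That contradicts nothing, because near $\partial B_s(x)$ all you know is $L_r<\infty$ for each $r<s$, with no universal bound (put many terminals on $\partial B_s(x)$).

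The per-cell fix is false for $d>2$. A cell of side $\delta$ can contain arbitrarily many crossings, joining $m$ well-separated ones costs on the order of $\delta\, m^{(d-2)/(d-1)}$, and there is no planar structure to exploit. Step~1 adds nothing either. A bound on the number of crossings is a \emph{consequence} of the length bound (Corollary~\ref{cor:main}). Also, crossings of an annulus need not be disjoint, since they can share branches.

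\textbf{The missing idea.} You need to tie the scale of the competitor to the number of crossings. This makes the inequality superlinear, so $L$ is forced to blow up strictly before radius $s$. The paper's argument, with $s=1$ and $t_r=\#(\St\cap\partial B_r(x))$, runs as follows.
\begin{enumerate}
\item Theorem~\ref{th:tpoints} and Lemma~\ref{lm:lengthbound} give $L_r\le 12\,r\,t_r^{(d-2)/(d-1)}$, hence $t_r\ge L_r^{(d-1)/(d-2)}/144$.
\item Lemma~\ref{lm:coarea} turns this into $L_R\ge L_\rho+\frac{1}{144}\int_\rho^R L_r^{(d-1)/(d-2)}\,dr$.
\item Compare with the solution of $f'=f^{(d-1)/(d-2)}/144$, $f(\rho)=L_\rho$. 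It shows $L$ would be infinite at radius $\rho+144(d-2)L_\rho^{-1/(d-2)}$.
\item By Corollary~\ref{cor:everyeps} this radius must be at least $1$, which gives $L_\rho\le(144(d-2)/(1-\rho))^{d-2}$.
\end{enumerate}

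Your discrete scheme can be repaired along the same lines. At each step choose $\delta_k$ with $G(\delta_k)\le L_{r_k}/2$. Then $L$ grows by a fixed factor $q>1$ per shell while $\delta_{k+1}\le q^{-1/(d-2)}\delta_k$, so the widths sum to at most $\delta_0/(1-q^{-1/(d-2)})=O(d\,\delta_0)$. If $L_\rho>2G(\delta_0)$ with $\delta_0\approx(1-\rho)s/(Cd)$ (your scale, now used only as the starting one), the radii $r_k$ converge below $s$ to a radius where $L$ is infinite. That contradicts Corollary~\ref{cor:everyeps}.

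Finally, the constant $144=12^2$ comes from $c_d\le c_3<12$ in Theorem~\ref{th:tpoints} together with $(d-1)/(d-2)\le 2$. A cubical grid would produce a different constant, so your claim that the route yields exactly $144d/(1-\rho)$ is not supported.
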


It is more or less clear (and we discuss it in the next subsection) that the condition $\rho < 1$ is necessary.

The bound on the length of $\St\cap B_{\rho s}(x)$ in Theorem~\ref{th:main} implies the following density-type corollary controlling the number of segments and branching points.

\begin{corollary} \label{cor:main}
   Under the assumptions of Theorem~\ref{th:main}, the set $\St \cap B_{\rho s}(x)$ consists of at most
    \[
    \left ( \frac{144d}{1-\rho} \right) ^{d-1}
    \]
    line segments. In particular, this means that the number of branching points inside the ball $B_{\rho s}(x)$ is bounded by the same quantity.
\end{corollary}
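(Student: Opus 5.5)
We derive Corollary~\ref{cor:main} from Theorem~\ref{th:main} by turning the length bound into a count of segments. Two ingredients are needed: a lower bound on the length of each segment of $\St$ that meets $B_{\rho s}(x)$, and the structure of $\St$ near the ball given by Corollary~\ref{cor:everyeps}.

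First we fix the structure. Set $\rho' = (1+\rho)/2 \in (\rho,1)$. By Corollary~\ref{cor:everyeps}, the set $\St \cap \overline{B_{\rho' s}(x)}$ is a finite union of segments. So $\St \cap B_{\rho s}(x)$ is a finite embedded forest whose vertices inside the ball have degree at least $3$. There are no endpoints inside the ball, since endpoints lie on $\mathcal A$ by item~4 of Theorem~\ref{th:PaoSte}. Degree-$2$ vertices only occur at points of $\mathcal A$, and the angle condition at every vertex is at least $2\pi/3$.

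Next we count segments. Call the maximal segments of $\St \cap B_{\rho s}(x)$ its edges. Each edge is either
\begin{enumerate}
\item an edge with both ends at branching points inside the ball, or
\item an edge with at least one end on $\partial B_{\rho s}(x)$.
\end{enumerate}
In a forest whose internal vertices have degree $3$, the number of edges is at most $2L-1$ per component, where $L$ is the number of boundary leaves. So it suffices to bound the number of boundary crossings and the number of components.

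To bound these, we apply Theorem~\ref{th:main} to the slightly larger ball $B_{\rho' s}(x)$, which gives
\[
\H\bigl(\St \cap B_{\rho' s}(x)\bigr) \le s \left(\frac{144d}{1-\rho'}\right)^{d-2} = s \left(\frac{288d}{1-\rho}\right)^{d-2}.
\]
Take a leaf on $\partial B_{\rho s}(x)$ and follow $\St$ outward. Since there are no endpoints away from $\mathcal A$, the path reaches $\partial B_{\rho' s}(x)$. These continuations can be chosen essentially disjoint: branching only helps, because we may choose one branch per leaf. Each continuation has length at least $(\rho'-\rho)s = (1-\rho)s/2$. Hence the number of boundary leaves is at most
\[
\frac{2}{1-\rho}\left(\frac{288d}{1-\rho}\right)^{d-2}.
\]
This is of the form $C\bigl(144d/(1-\rho)\bigr)^{d-1}$ up to a factor $2^{d-1}\cdot d^{-1}$-type constant. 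So we get the claimed bound provided the constants are arranged as in the author's normalization, possibly by choosing $\rho'$ closer to $\rho$ or by absorbing factors using $d>2$.

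The step I expect to be the main obstacle is making the continuations disjoint. Two leaves can merge outside $B_{\rho s}(x)$ at a branching point, so the outward paths are not automatically disjoint. I would handle this with a tree argument. Consider the forest $\St \cap B_{\rho' s}(x)$ and contract $\St \cap \overline{B_{\rho s}(x)}$. Each component of the annulus part that touches the inner sphere $k$ times is a tree with no internal leaves. Such a tree has at least $k$ leaves on the outer sphere, or else it meets the inner sphere in a way that forces an internal cycle, which is impossible. Therefore its total length is at least $k(1-\rho)s/2$, by pairing leaves along a depth-first structure. Simpler still, one can use the classical fact that a tree with $k$ marked vertices on the inner sphere and no other leaves except on the outer sphere has length at least about $k$ times the radial gap divided by $2$. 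I would first try to prove this inequality by induction on the number of branch points. The branching-point bound then follows, since a tree has fewer branching points than edges.
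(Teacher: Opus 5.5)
There is a genuine gap, at exactly the step you flag. Your plan bounds the number of crossings $\#(\partial B_{\rho s}(x)\cap\St)$ at the \emph{fixed} radius $\rho s$ by the annulus length divided by its width $(\rho'-\rho)s$, using disjoint outward continuations. Neither the disjointness nor your proposed repair holds.

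Your claim that an annulus component meeting the inner sphere $k$ times has $k$ leaves on the outer sphere ``or else a cycle appears'' already fails for a single regular tripod. Put its branching point at distance $\eta\ll(\rho'-\rho)s$ outside $\partial B_{\rho s}(x)$, with one leg pointing radially outward. The other two legs make angle $\pi/3$ with the inward radial direction, so for small $\eta$ both re-enter $B_{\rho s}(x)$. This situation occurs near any branching point of any Steiner tree once the ball is small enough. The two legs are then disjoint chords of $B_{\rho s}(x)$, so no cycle arises. Yet their outward continuations share the single radial leg, and the component has length about $(\rho'-\rho)s$, not $2(\rho'-\rho)s$.

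In $d\ge 3$ the planes of neighbouring tripods can be tilted independently. This gives locally minimal components with one outer leaf that re-enter the inner ball three times and have length tending to $(\rho'-\rho)s$. So the ``classical fact'' (length $\gtrsim k(\rho'-\rho)s/2$) cannot come from an induction on branching points that uses only the local $2\pi/3$ structure. You offer no global minimality argument in its place.

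The missing idea is that you never need to control crossings at radius $\rho s$. With $s=1$, the paper picks $q\in(\rho,1)$. Lemma~\ref{lm:coarea} together with Theorem~\ref{th:main} applied to $B_q(x)$ gives a radius $u\in[\rho,q]$ with
\[
t:=\#(\partial B_u(x)\cap\St)\le \frac{1}{q-\rho}\left(\frac{144d}{1-q}\right)^{d-2}.
\]
Since $B_u(x)$ contains no terminals, the forest $\St\cap B_u(x)$ has at most $2t-3$ segments. By convexity, each of them meets $B_\rho(x)$ in at most one segment, which transfers the count to the smaller ball with no information about crossings at radius $\rho$.

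The constant also needs care. Your choice $\rho'=(1+\rho)/2$ would give $2^d(144d)^{d-2}(1-\rho)^{1-d}$, which exceeds the claimed bound as soon as $2^d>144d$, i.e.\ for $d\ge 11$. The paper instead takes $q=(1+(d-2)\rho)/(d-1)$ and uses $(d-1)^{d-1}/(d-2)^{d-2}<e(d-1)<72d$.
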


The reasons for requiring the condition $\rho \in (0,1)$ are more delicate than those for the finiteness of $\H(\St)$.
However, Theorem~\ref{th:example} shows that even in the plane the statement of Corollary~\ref{cor:main} fails for $\rho = 1$.

Finally, in the notation of Paolini and Stepanov~\cite{paolini2013existence} the results can be reformulated as the following (slightly weaker) statement.

\begin{corollary} \label{cor:inNotationofPaoSte}
  Fix $d > 2$. For every $\varepsilon>0$, every $\rho\in(0,1)$, and every $x \in \St_\varepsilon$, we have
  \[
  \H(\St_\varepsilon \cap B_{\rho \varepsilon}(x)) < \varepsilon \left ( \frac{144d}{1-\rho} \right) ^{d-2}.
  \]
  The set $\St_\varepsilon \cap B_{\rho \varepsilon}(x)$ consists of at most 
   \[
    \left ( \frac{144d}{1-\rho} \right) ^{d-1}
   \]
   line segments.
\end{corollary}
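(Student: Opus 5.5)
This corollary should follow from Theorem~\ref{th:main} and Corollary~\ref{cor:main} by a change of notation, applied to each component of a Steiner set. The plan is to check that a ball of radius $\varepsilon$ around a point of $\St_\varepsilon$ misses $\mathcal A$, so the hypotheses of Theorem~\ref{th:main} hold with $s=\varepsilon$.

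\emph{Step 1: the ball misses $\mathcal A$.} Fix $\varepsilon>0$, $\rho\in(0,1)$ and $x\in\St_\varepsilon = \St\setminus B_\varepsilon(\mathcal A)$. Then $\dist(x,\mathcal A)\ge\varepsilon$, hence $B_\varepsilon(x)\cap\mathcal A=\emptyset$. This is exactly the hypothesis of Theorem~\ref{th:main} with $s=\varepsilon$.

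\emph{Step 2: reduce to Steiner trees.} Theorem~\ref{th:main} is stated for Steiner trees, while $\St$ here is a Steiner set of finite length. By item 2 of Theorem~\ref{th:PaoSte}, $\St\setminus\mathcal A$ has countably many components. I would argue that the closure $T$ of the component containing the relevant pieces is itself a Steiner tree for $\mathcal A\cap T$. Otherwise it could be replaced by something shorter keeping $\St\cup\mathcal A$ connected, which contradicts minimality.

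\emph{Step 3: control the number of components.} This is the main obstacle: several components may meet $B_{\rho\varepsilon}(x)$, and Theorem~\ref{th:main} bounds only one tree at a time. I would try either of two routes.
\begin{itemize}
\item[(a)] Show that at most one component meets $B_\varepsilon(x)$ away from $\mathcal A$.
\item[(b)] Rerun the competitor argument behind Theorem~\ref{th:main} for the union of the components, since that argument only uses minimality among sets $S$ with $S\cup\mathcal A$ connected, and $\St\cap B_\varepsilon(x)$ can be modified locally without breaking this.
\end{itemize}
Either route yields
\[
\H(\St\cap B_{\rho\varepsilon}(x))\le \varepsilon\left(\frac{144d}{1-\rho}\right)^{d-2}.
\]
Since $\St_\varepsilon\subset\St$, the same bound holds for $\H(\St_\varepsilon\cap B_{\rho\varepsilon}(x))$.

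\emph{Step 4: strict inequality.} To upgrade $\le$ to $<$, apply Theorem~\ref{th:main} with $\rho$ replaced by a slightly larger $\rho'\in(\rho,1)$ and $s$ slightly smaller than $\varepsilon$, chosen so that $\rho' s>\rho\varepsilon$. Alternatively, use the slack in the constant $144$. The problem statement itself calls this corollary slightly weaker, so this slack should be available.

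\emph{Step 5: count segments.} Intersecting with the ball only reduces the number of segments, since $B_{\rho\varepsilon}(x)$ is convex and meets each segment of $\St$ in at most one segment. Hence the segment bound $\left(\frac{144d}{1-\rho}\right)^{d-1}$ for $\St_\varepsilon\cap B_{\rho\varepsilon}(x)$ follows directly from Corollary~\ref{cor:main} with $s=\varepsilon$.
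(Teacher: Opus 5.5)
Your core argument (Step~1, then Theorem~\ref{th:main} with $s=\varepsilon$, the inclusion $\St_\varepsilon\subset\St$, and Corollary~\ref{cor:main}) is the paper's proof. The paper applies Theorem~\ref{th:main} directly to $\St$, so Steps~2--3 are not needed there.

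If you want the statement for Steiner sets, route~(b) is the right one and is immediate. Corollary~\ref{cor:everyeps} and Lemma~\ref{lm:lengthbound} only use that $\St$ is a Steiner set of finite length. The competitor $(\St\setminus B_r(x))\cup\mathcal A\cup S$ is connected, because every component of $(\St\cup\mathcal A)\setminus B_r(x)$ meets $\partial B_r(x)\cap\St\subset S$ (boundary bumping in the continuum $\St\cup\mathcal A$). The reduction in Step~2 and route~(a) are unnecessary, and as stated unproved.

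In Step~5, convexity of $B_{\rho\varepsilon}(x)$ does not control the passage from $\St$ to $\St_\varepsilon$. That step also removes the non-convex set $B_\varepsilon(\mathcal A)$, which can cut a segment into several pieces. What the inclusion gives, in your argument and in the paper's, is that $\St_\varepsilon\cap B_{\rho\varepsilon}(x)$ is contained in a union of at most $\bigl(\frac{144d}{1-\rho}\bigr)^{d-1}$ segments. The segment count has to be read that way.

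Step~4 contains a genuine error. The rescaling device fails in general. Fix the radius $r=\rho\varepsilon$ and apply Theorem~\ref{th:main} with outer radius $s\le\varepsilon$ and $\rho'=r/s$. This gives the bound $(144d)^{d-2}s^{d-1}/(s-r)^{d-2}$. Its logarithmic derivative $\frac{d-1}{s}-\frac{d-2}{s-r}$ is negative for $s<(d-1)r$. Enlarging the radius $\rho's$ beyond $r$ only increases the bound further. Hence for $\rho\ge 1/(d-1)$ (e.g.\ $d=3$, $\rho\ge 1/2$), every admissible choice with $s<\varepsilon$ or $\rho's>\rho\varepsilon$ gives a \emph{weaker} bound than $s=\varepsilon$, $\rho'=\rho$.

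The slack option is the correct one, but you have to locate the slack. The proof of Theorem~\ref{th:main} actually ends with $L_\rho\le\bigl(144(d-2)/(1-\rho)\bigr)^{d-2}$, and $d-2<d$ gives the strict inequality. The paper calling the corollary ``slightly weaker'' is no evidence of spare constants. Note that the paper's own proof of this corollary only writes $\le$. With this fix you justify the strict sign, which the paper does not.
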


\paragraph{Structure of the paper.} In Subsection~\ref{subsec:ADregularity} we introduce the notion of Ahlfors--David regularity and 
explain why the assumption $\H(\St) < + \infty$ and the restriction to a smaller ball are unavoidable. Section~\ref{sec:classics} contains a few classical results we use; despite the fact that some of them seem to be well-known, they are difficult to find in the required form, so we accompany them with proofs. 
In Section~\ref{sec:example} we present a trickier example, showing that even in the plane we need a smaller ball to ensure that the combinatorial structure inside the ball is finite; in Subsection~\ref{subsec:planar} we prove a certain extremal property of this example. 
Subsection~\ref{subsec:mainproofs} contains the proofs of the main results (Theorem~\ref{th:main} and Corollaries~\ref{cor:main} and~\ref{cor:inNotationofPaoSte}). 
Finally, Section~\ref{sec:open} collects open questions.

\subsection{Ahlfors--David Regularity} \label{subsec:ADregularity}

Recall that for every point $x \in \St \setminus \mathcal{A}$, there exists $\varepsilon > 0$ such that the set $\St \cap B_\varepsilon(x)$ is either a segment or a regular tripod. However, this $\varepsilon$ can significantly depend on $x$.

\begin{definition}[Ahlfors--David regularity]
A set $T$ is called Ahlfors--David regular if there exist constants $c,C > 0$
and a radius $r_0$ such that for every positive $r < r_0$ and every $t \in T$, the following estimates hold:
\[
c \leq \frac{\H(T \cap B_r(t))}{r} \leq C.
\]
\end{definition}

Solutions to the Steiner problem for finite $\mathcal{A}$ (i.e., solutions to Problem~\ref{pr:finite}) are Ahlfors--David regular with sharp constants $c = 1$ and $C = 3$.

Solutions to Problem~\ref{pr:general} are somewhat more complex even in the case of a countable compact $\mathcal{A}$ with a single accumulation point. Let us illustrate this property with the following example (see Fig~\ref{pic:1badA}). 

\begin{example} \label{ex:Abad}
   Consider the following explicitly defined set (see Fig.~\ref{pic:1badA})
   \[
    \mathcal{A}_{bad} = \left \{ \left( \frac{a}{4^k}, \frac{b}{4^k} \right); 0 \leq \frac{a}{4^k}, \frac{b}{4^k} \leq \frac{1}{2^k}, a,b,k \in \mathbb{N} \cup \{0\} \right\}.
   \]
   Clearly, $\mathcal{A}_{bad}$ is a certain set of points with rational coordinates in the unit square and $\mathcal{A}_{bad}$ has a unique accumulation point $(0,0)$.
   It turns out that any Steiner set for $\mathcal{A}_{bad}$ has infinite length.
\end{example}

\begin{proof}

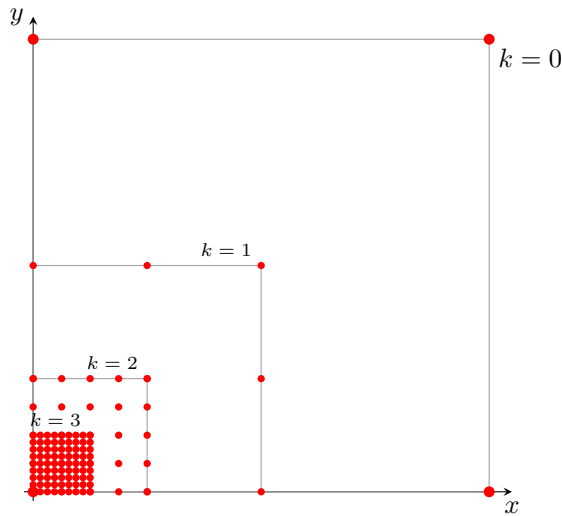
\begin{figure}[h]
    \centering
    \begin{tikzpicture}[scale=6, >=stealth]
  \draw[->] (-0.02,0) -- (1.05,0) node[below] {$x$};
  \draw[->] (0,-0.02) -- (0,1.05) node[left] {$y$};

  \draw[gray!75] (0,0) rectangle (1,1);

  \def\KMAX{3} 

  \foreach \k in {1,...,\KMAX}{
    \pgfmathsetmacro{\step}{1/pow(4,\k)}
    \pgfmathsetmacro{\mx}{1/pow(2,\k)}
    \pgfmathtruncatemacro{\n}{pow(2,\k)}

    \draw[gray!75] (0,0) rectangle (\mx,\mx);
    \node[anchor=south east, font=\scriptsize] at (\mx,\mx) {$k=\k$};

    \foreach \i in {0,...,\n}{
      \foreach \j in {0,...,\n}{
        \fill[red] ({\i*\step},{\j*\step}) circle[radius=0.008];
      }
    }
  }
  \foreach \X/\Y in {0/0, 1/0, 0/1, 1/1} {
    \fill[red] (\X,\Y) circle[radius=0.012];
  }
  \node[below right] at (1,1) {$k=0$};
  
\end{tikzpicture}
    \caption{Example of a compact $\mathcal{A}$ that has a Steiner set of infinite length}
    \label{pic:1badA}
\end{figure}

By construction, the points of $\mathcal{A}_{bad}$ lying in the corner region with vertices $(0, 2^{-k})$, $(0,2^{1-k})$, $(2^{1-k},2^{1-k})$, $(2^{1-k},0)$, $(2^{-k},0)$ and $(2^{-k},2^{-k})$ have neighborhoods of radius $10^{-1}4^{-k}$ that are pairwise disjoint (also disjoint from the corresponding neighborhoods for other values of $k$). Any Steiner tree for $\mathcal{A}_{bad}$ must connect each point to the boundary of its neighborhood, and the corresponding paths do not intersect. This means that the length of any connected set containing $\mathcal{A}_{bad}$ is at least the sum of the neighborhood radii, that is, the length of any Steiner tree is at least
\[
\sum_{k=1}^\infty \frac{1}{10 \cdot 4^k} 16^{k-1} = \infty.
\]
In such a situation, \textit{any} connected set containing $\mathcal{A}_{bad}$ is a Steiner tree, and there is no regularity to discuss.
\end{proof}

Thus, it is necessary to require $\H(\St) < +\infty$. 
However, even in this case, the Steiner tree can be as irregular as $\mathcal{A}$; in particular, it may have an arbitrarily long length in a given ball.
An alternative explanation of this phenomenon is the fact that the length of a Steiner tree for $N$ random points in the unit ball in $\mathbb{R}^d$ grows proportionally to $N^{\frac{d-1}{d}}$ (the maximum length has the same order of growth).

There are two ways to solve this problem: impose additional conditions on $\mathcal{A}$, or modify the definition. The following modification seems natural to us.

\begin{definition}[Ahlfors--David regularity of Steiner trees] \label{def:AD}
A Steiner tree $\St$ is called Ahlfors--David regular if there exist constants $c,C > 0$
such that for every ball $B_r(x)$ with center $x\in\St$ that does not intersect $\mathcal A$ we have
\[
c \leq \frac{\H(\St \cap B_r(x))}{r} \leq C.
\]
\end{definition}

It is worth noting that the intersection $\St \cap B_r(x)$ is not necessarily connected.

It is clear that in any Euclidean space $\mathbb{R}^d$, we can take $c = 2$, and this estimate is sharp for points in the interior of any segment from any Steiner tree and small enough $r$. In the plane, regularity with constant $C = 2\pi$ is obvious (a competitor can be produced by a replacement of a part of a Steiner set inside a circle with the boundary of the circle), but finding the smallest possible value of $C$ is a nontrivial problem.

In higher dimensions it is impossible to have universal regularity in the sense of Definition~\ref{def:AD}.
Indeed, if we take $N$ random points on the sphere $\mathbb{S}^{d-1} = \partial B_1(o)$, then the length of a Steiner tree for them is at most $N$ (and so finite). 
However, the sum of the radii of disjoint neighborhoods as in the analysis of Example~\ref{ex:Abad} will tend to infinity as $N \to \infty$. 
So a constant $C$ in Definition~\ref{def:AD} depends on $\St$. 
On the other hand, if we consider a smaller ball $B_{\rho}(o)$, then most of the length lies outside $B_{\rho}(o)$ and a universal estimate like the one in Theorem~\ref{th:main} becomes possible.

\section{Classical results} \label{sec:classics}

In this section, we present a number of classical results concerning Steiner trees. 
We could not find some of them in the required form in the literature, so we prove suitable variants.

\subsection{Convex hull property}

It is well-known that a Steiner tree for a finite $\mathcal{A}$ lies in $\conv \mathcal{A}$. We need the following generalization.

\begin{lemma} \label{lm:conv}
    A Steiner set $\St$ for a compact $\mathcal{A} \subset \mathbb{R}^d$ is a subset of $\conv \mathcal{A}$, provided that $\H(\St) < \infty$.
\end{lemma}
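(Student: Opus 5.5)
We argue by contradiction, using the nearest-point projection onto $K := \conv \mathcal{A}$. Since $\mathcal{A}$ is compact, $K$ is a compact convex set. Let $\pi \colon \mathbb{R}^d \to K$ be the metric projection. It is $1$-Lipschitz, and it is the identity on $K$. Suppose $\St \not\subset K$, and set $S' := \pi(\St)$. Because $\pi$ is $1$-Lipschitz, we get $\H(S') \le \H(\St) < \infty$.

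First we check that $S'$ is an admissible competitor. $S' \cup \mathcal{A} = \pi(\St \cup \mathcal{A})$, since $\pi$ fixes $\mathcal{A} \subset K$. A continuous image of a connected set is connected, so $S' \cup \mathcal{A}$ is connected. The definition of a Steiner set in the paper requires it to be closed, but the comparison is against arbitrary $S$, so closedness of $S'$ is not needed. It does hold anyway: by item 1 of Theorem~\ref{th:PaoSte}, $\St \cup \mathcal{A}$ is compact, so $\pi(\St \cup \mathcal{A})$ is compact.

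The main step is a strict inequality $\H(S') < \H(\St)$. Take $y \in \St \setminus K$ and let $\delta = \dist(y, K) > 0$. The set $U = \{z : \dist(z,K) > \delta/2\}$ is open, contains $y$, and is disjoint from $\mathcal{A}$. We split $\H(\St) = \H(\St \cap U) + \H(\St \setminus U)$. Then
\[
\H(\pi(\St)) \le \H(\pi(\St \cap U)) + \H(\St \setminus U),
\]
so it suffices to show $\H(\pi(\St \cap U)) < \H(\St \cap U)$. There are two ways to get this.

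\emph{Option (a).} Let $\St'$ be the connected component of $\St$ containing $y$. Its closure is a tree with endpoints on $\mathcal{A}$ (item 4 of Theorem~\ref{th:PaoSte}), so $\overline{\St'}$ connects $y$ to some point $a \in \mathcal{A} \subset K$. Along the path from $y$ to $a$, the function $\dist(\cdot, K)$ drops from $\delta$ to $0$. Hence the arc lying in $\{\delta/4 < \dist(\cdot, K) < \delta/2\}$ has length at least $\delta/4$. Its projection does not shrink it strictly in general, so option (a) alone is insufficient.

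\emph{Option (b), preferred.} Compose $\pi$ with an extra contraction. Define $\phi(z) = \pi(z) + \lambda(z)\,(z - \pi(z))$, where $\lambda(z) = \max\bigl(0, 1 - \tfrac{\delta}{2\,\dist(z,K)}\bigr)$ pulls points outward of $K$ partially. Simpler still: use $\phi = \pi_{K_t}$, the projection onto the convex body $K_t = \{\dist(\cdot,K) \le t\}$ with $t = \delta/2$. This is again $1$-Lipschitz, fixes $\mathcal{A}$, and moves $y$.

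The strictness should come from the following fact. The nearest-point projection onto a convex set $C$ strictly shortens any rectifiable arc lying outside $C$ that joins a point $p$ at positive distance from $C$ to a point $q \in \partial C$. To see this, parametrize the arc by arc length. The map $z \mapsto \dist(z, C)$ is $1$-Lipschitz, and on this arc its derivative has total variation at least $\dist(p, C)$. The tangent vector splits into a normal component, which is killed by the projection, and a tangential component, which is contracted. So the projected length is at most $\int \sqrt{1 - \langle \gamma', \nu \rangle^2} < \text{length}$, because $\int |\langle \gamma', \nu \rangle| \ge \dist(p, C) > 0$.

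We apply this to the arc of $\St$ from $y$ toward $\mathcal{A}$ that lies outside $K_{\delta/2}$. This arc exists by item 4 of Theorem~\ref{th:PaoSte} together with the fact that $\St \cap (\mathbb{R}^d \setminus \overline{K_{\delta/4}})$ is locally a finite union of segments (Corollary~\ref{cor:everyeps}). The arc gives a length loss of order $\delta$, while the rest of $\St$ does not increase in length. Therefore $\H(\phi(\St)) < \H(\St)$, which contradicts minimality.

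The main obstacle is making the strict decrease rigorous. Lipschitz images can overlap, which only helps, but the strict inequality has to be localized on a set of positive measure. The finite-segment structure away from $\mathcal{A}$ (Corollary~\ref{cor:everyeps}) reduces the infinitesimal argument to segments, where it is elementary.
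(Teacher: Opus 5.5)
Your final argument (Option (b)) is correct, but it takes a different route from the paper.

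\textbf{The paper's route.} The paper never builds a competitor. It applies Krein--Milman to the compact set $\conv(\St\cup\mathcal{A})$, which is compact by item 1 of Theorem~\ref{th:PaoSte}. By item 6, near every point of $\St\setminus\mathcal{A}$ the set $\St$ is a segment or a regular tripod, so no such point is extreme. Hence all extreme points lie in $\mathcal{A}$, and $\conv(\St\cup\mathcal{A})=\conv\mathcal{A}$.

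\textbf{Your route.} You use that the nearest-point projection onto a convex set containing $\mathcal{A}$ is a $1$-Lipschitz retraction, so $\phi(\St)$ is an admissible competitor. You get strictness from a rectifiable arc $\gamma\subset\St$ running from $y$ to the boundary, via the a.e.\ bound $|(\phi\circ\gamma)'|\le\sqrt{1-\alpha^2}$, where $\alpha=\langle\gamma',\nu\rangle$, together with $\int|\alpha|\ge\dist(y,C)>0$ (here $C$ is the convex set you project onto).

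\textbf{Comparison.} The paper's proof takes a few lines but relies on the full local structure of $\St$ off $\mathcal{A}$. Yours needs only one thing from the regularity theory: that a rectifiable arc of $\St$ joins $y$ to the convex set. That follows from compactness, connectedness and finite length of the closure of the component of $\St$ containing $y$. It is the classical variational argument and does not use the segment/tripod picture; the price is the small differential estimate.

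\textbf{Minor inaccuracies (none affects validity).}
\begin{itemize}
\item Passing from $K$ to $K_{\delta/2}$ is unnecessary. The same estimate applies to the arc from $y$ to its first point in $K$, so projecting onto $K$ already works, with no appeal to the polygonal structure.
\item Your dismissal of Option (a) is wrong. The same estimate shows the projection strictly shortens the shell arc as well.
\item The gain is not ``of order $\delta$''. Using $1-\sqrt{1-\alpha^2}\ge\alpha^2/2$ and Cauchy--Schwarz, it is at least $\delta^2/(8L)$, where $L$ is the arc length. This is positive, and positivity is all you need.
\end{itemize}
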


\begin{proof}
    Set $\mathcal{B} = \conv (\St \cup \mathcal{A})$ and note that $\mathcal{B}$ is a compact set.
    Consider $x \in \St \setminus \mathcal{A}$. For a sufficiently small $\varepsilon > 0$, the point $x$ belongs to the interior of a finite forest $\St_\varepsilon$.
    For a small enough $\delta > 0$ we have $\St_\varepsilon \cap B_\delta(x) = \St \cap B_\delta(x)$ and this intersection is a segment or a regular tripod. Thus $x$ cannot be an extreme point of $\mathcal{B}$. 
    By the Krein--Milman theorem (see Theorem 3.23 in~\cite{RudinFA}), the set $\mathcal{B}$ is the convex hull of its extreme points and thus $\mathcal{B} = \conv \mathcal{A}$. Finally,
    \[
    \St \subset \conv \St \subset \mathcal{B} = \conv \mathcal{A}.
    \]
\end{proof}

\subsection{Combinatorics of finite locally minimal trees}

The \textit{topology} of an embedded forest $F$ is the corresponding abstract graph whose vertex set consists of leaves and branching points of $F$.
We say that the topology $T$ of a tree $S$ is full if $S$ is full.

In this subsection we consider the set of topologies with $n$ terminal points.

The topology is called \textit{degenerate} if a point from $\mathcal{A}$ has degree 3 in it. The name is due to the fact that a random tree on $n$ vertices has a degenerate topology with probability 0, and any other topology occurs with positive probability.
Note that if all vertices lie on a sphere, no locally minimal tree can have a degenerate topology.

Next, we call a topology $T$ \textit{realizable} for $\mathcal{A}$ if there exists a locally minimal tree $S(\mathcal{A})$ with topology $T$; we will denote this tree by $S_T(\mathcal{A})$.

It follows from Proposition~\ref{pr:convex} that $S_T(\mathcal{A})$ is well-defined. Moreover, one can construct $S_T(\mathcal{A})$ (or show that it is impossible) in linear time $O(n)$, where $n$ is the number of terminals, see~\cite{hwang1986linear}. However, we rarely know a priori which realizable topologies correspond to
Steiner trees. Although the number of possible topologies for an $n$-point configuration is finite, checking all of them can take a long time, as this number of topologies grows factorially with $n$, see~\cite{gilbert1968steiner}. Furthermore, the Steiner problem is NP-hard, see~\cite{garey1977complexity}.

For a full topology $T$ with $n$ terminals, we define $D(T)$ as the set of topologies with $n$ terminals that can be obtained from $T$ by contracting some edges (possibly none, i.e., $T \in D(T)$) connecting a terminal to a Steiner point (these edges must have pairwise distinct ends).

\begin{proposition}[Gilbert--Pollak~\cite{gilbert1968steiner}, Hwang--Weng~\cite{hwang1992shortest}] \label{pr:convex}
Let $T$ be a full topology with $n$ terminals and $\mathcal{A}$ be a finite subset of cardinality $n$ in the plane. Consider the function 
\[
L(y_1, \ldots , y_{n-2}) : (\mathbb{R}^2)^{n-2} \to \mathbb{R},
\]
which is the length of the tree on the set of vertices $\mathcal{A} \cup \{y_1, \dots , y_{n-2}\}$ with branching points $y_1, \dots , y_{n-2}$ connected by straight edges according to the topology $T$ (we allow $y_i$ to coincide with each other and with terminals, as well as edge intersections). Then
$L$ has a unique local minimum, and consequently, there is at most one locally minimal tree for $\mathcal{A}$ with a topology from $D(T)$.
\end{proposition}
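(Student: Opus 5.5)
We need to prove that the length function $L$ on $(\mathbb{R}^2)^{n-2}$ has a unique local minimum, and deduce that there is at most one locally minimal tree with topology in $D(T)$.

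\textbf{Convexity.} Write $L$ as a sum over the edges of $T$. Each edge contributes either $\dist(y_i,y_j)$ or $\dist(a_k,y_i)$. Each such term is the Euclidean norm of an affine function of $(y_1,\dots,y_{n-2})$, hence convex. So $L$ is convex, and every local minimum is a global minimum. The set of minimizers $M$ is convex.

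\textbf{Coercivity.} $L$ is coercive: $L\ge \max_i \dist(y_i,\mathcal A)$, since every $y_i$ is joined to some terminal by a path in the tree. Hence $M$ is nonempty and compact. It remains to show that $M$ is a single point, which is where strict convexity has to be extracted.

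\textbf{Uniqueness (main obstacle).} Suppose $Y\neq Y'$ are both in $M$. Then $L$ is affine on the segment $[Y,Y']$, so every edge term is affine along $Y_t=(1-t)Y+tY'$. The norm $\|u+tv\|$ is affine in $t$ only if $u$ and $v$ are parallel with $u+tv$ not changing direction on $[0,1]$ (or $v=0$). So along the segment every edge vector keeps a fixed direction and changes length linearly. I would then argue inductively from the leaves:
\begin{itemize}
\item A terminal edge $a_k y_i$ has a fixed direction and fixed endpoint $a_k$, so $y_i$ moves along a fixed line through $a_k$.
\item Edges between Steiner points keep their directions, so the whole configuration moves in a direction-preserving way.
\end{itemize}
Take the displacement field $v_i=y_i'-y_i$ and pick the $y_i$ with largest $|v_i|$, or argue by a tree-peeling count. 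A full tree has $n-2$ Steiner points, each of degree 3. At a Steiner point, the three incident edges at a (global, hence local) minimum with positive lengths meet at $120^\circ$. Motion with fixed edge directions then forces all $v$'s to be parallel to each edge direction at each vertex, which is impossible unless $v=0$, because the three directions at a Steiner vertex are not collinear.

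The delicate part is degenerate minimizers, where some edges have zero length (Steiner points collapsing onto terminals or onto each other). There the directions are undefined and the $120^\circ$ condition fails. I would handle this in two steps. First, edges of length zero along the whole segment can be contracted. Second, an interior-of-segment point of $M$ can be chosen where the set of zero-length edges is constant. Then run the rigidity argument on the contracted topology, using that a degree-$\ge 2$ vertex with two non-parallel fixed edge directions cannot move. The remaining case of collinear configurations needs a separate check, likely via planarity: a straight chain of edges forces degree-2 vertices, which contradicts the minimality angle condition of at least $2\pi/3$ only if the chain bends.

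\textbf{The consequence for $D(T)$.} Any locally minimal tree with topology in $D(T)$ gives a point of $(\mathbb{R}^2)^{n-2}$: contracted Steiner points are placed at their terminals. I expect this point to be a local minimum of $L$. This follows from the angle conditions, which make the point a critical point of the convex function $L$; the subgradient condition at coincident points uses the $\ge 2\pi/3$ angle at the terminal. By uniqueness of the minimizer, at most one such tree exists.
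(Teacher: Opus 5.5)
The paper gives no proof of this proposition (it is quoted from Gilbert--Pollak and Hwang--Weng), so your argument has to stand on its own. Your convexity and coercivity steps are correct. So is the observation that on a segment $[Y,Y']\subset M$ every edge term is affine, hence every edge vector keeps its direction.

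The genuine gap is the goal of your uniqueness paragraph: $M$ need not be a single point, so the ``remaining collinear case'' you postpone cannot be closed. Put $a_1,a_2,a_3,a_4$ at $0,1,2,3$ on the real axis and take the full topology $y_1\sim a_1,a_3,y_2$ and $y_2\sim a_2,a_4,y_1$ (where $\sim$ lists neighbours). Then $L\ge |a_1a_3|+|a_2a_4|=4$, with equality exactly when $y_1=y_2=p$ for some $p\in[1,2]$: a whole segment of minimizers. Collinearity of $\mathcal A$ is not essential. Add $a_5=4+i$ and a third Steiner point $y_3\sim y_2,a_4,a_5$ (now $y_2\sim y_1,a_2,y_3$). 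This gives the minimizers $(p,p,3)$, $p\in[1,2]$, since $\angle a_2a_4a_5=3\pi/4\ge 2\pi/3$ makes $a_4$ the Fermat point of the triangle $a_2a_4a_5$. What defeats rigidity is a cluster of at least two coincident Steiner points whose external edges all lie on one line, equally many in each direction, so that edges overlap. No angle condition at degree-$2$ vertices detects this. Read as uniqueness of the minimizer, the first claim of the proposition is therefore false, and your closing step ``by uniqueness of the minimizer'' rests on it.

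What is true is the consequence, and your own ingredients prove it once rigidity is applied only to minimizers that come from trees. Your subgradient argument correctly shows that a locally minimal tree with topology in $D(T)$ gives a minimizer $Y$ of $L$. In it every Steiner--Steiner edge of $T$ has positive length, because contracted edges have pairwise distinct ends. Given two such minimizers $Y,Y'$, pass to the midpoint $Z$, with positions $z_i$ and displacements $v_i=y_i'-y_i$. Edge lengths at $Z$ are averages, so Steiner--Steiner edges remain positive. A terminal edge $[ay]$ vanishes at $Z$ only if $y=a$ along the whole segment, in which case $v_y=0$. Every other Steiner point has three edges of positive length at $Z$, hence meets them at $120^\circ$.

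Note that the constraint along the segment is $v_i-v_j\parallel z_i-z_j$ for each edge, not that $v_i$ is parallel to every edge at $i$. Your largest-$|v_i|$ idea does not close on its own, so peel instead. In the forest spanned by the Steiner points whose displacement is not yet known to vanish, a leaf has at least two neighbours with zero displacement. It is joined to them by positive edges in non-parallel directions, so its own displacement is $0$. Induction gives $Y=Y'$, i.e.\ at most one locally minimal tree with topology from $D(T)$.
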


It is easy to see that the sets $D(T)$ form a partition of the set of all non-degenerate topologies with $n$ terminals.
Indeed, consider a non-full topology $R$ that is not degenerate. For each vertex $x$ of degree two in $R$, we add a new branching point, connect it to $x$'s neighbors (say, $y$ and $z$), and also to $x$ itself, and remove the edges $xy$, $xz$. The resulting full abstract tree $T$ is the unique one for which $R \in D(T)$.

\subsection{Geometry of planar finite locally minimal trees}

In the plane there is a nice geometric argument, introduced by Melzak. 

\begin{lemma}[Melzak's reduction,~\cite{melzak1961problem}] \label{lm:Melzak}
Let $S$ be a full locally minimal tree for a finite $\mathcal{A} \subset \mathbb{R}^2$.
Let $q$ be a branching point adjacent to two pendant vertices of $S$, say $p_1$ and $p_2$.
Then the length $\H(S)$ is equal to the length of the connected set $S'$ obtained from $S$ 
by removing the points $p_1$ and $p_2$ and the edges $[p_1q]$ and $[p_2q]$ and prolonging the remaining edge arriving 
in $q$ up to a new vertex $p$ such that $p p_1 p_2$ is an equilateral triangle and (among the two possibilities) $p$ and $q$ are on opposite sides of the line  $(p_1,p_2)$.
\end{lemma}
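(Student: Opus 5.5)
The plan is to follow Melzak's original argument: show that $S$ and $S'$ have the same length by comparing the two pieces that differ. These are the tripod $[p_1q]\cup[p_2q]$ on one side and the segment from $q$ to $p$ on the other. So the whole statement reduces to the identity
\[
\dist(q,p_1)+\dist(q,p_2)=\dist(q,p),
\]
together with a check that $p$ lies on the prolongation of the third edge $[qr]$ beyond $q$. Here $r$ denotes the third neighbour of $q$. Then $S'$ consists of $S\setminus([p_1q]\cup[p_2q])$ with the edge $[rq]$ extended to $[rp]$. Its length is $\H(S)-\dist(q,p_1)-\dist(q,p_2)+\dist(q,p)$, which equals $\H(S)$.

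For the collinearity I will use that $q$ is a branching point of a locally minimal tree, so the three edges at $q$ meet at angles $2\pi/3$. In particular $\angle p_1qp_2=2\pi/3$. Let $p$ be the apex of the equilateral triangle on $[p_1p_2]$ on the side opposite to $q$. The quadrilateral $qp_1pp_2$ then has opposite angles $2\pi/3$ at $q$ and $\pi/3$ at $p$, which sum to $\pi$. Hence $q$ lies on the circumcircle of the triangle $pp_1p_2$.

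By the inscribed angle theorem, $\angle p_1qp = \angle p_1p_2p = \pi/3$ and likewise $\angle pqp_2=\pi/3$. Thus the ray $[qp)$ bisects $\angle p_1qp_2$. Since the third edge $[qr]$ makes angle $2\pi/3$ with both $[qp_1]$ and $[qp_2]$, it is the ray opposite to this bisector. Therefore $r$, $q$, $p$ are collinear with $q$ between $r$ and $p$.

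The length identity is Ptolemy's theorem for the cyclic quadrilateral $qp_1pp_2$:
\[
\dist(q,p)\cdot\dist(p_1,p_2)=\dist(q,p_1)\cdot\dist(p,p_2)+\dist(q,p_2)\cdot\dist(p,p_1).
\]
All three of $\dist(p_1,p_2)$, $\dist(p,p_1)$, $\dist(p,p_2)$ equal the side of the equilateral triangle, so they cancel. This gives $\dist(q,p)=\dist(q,p_1)+\dist(q,p_2)$. As a cross-check that avoids Ptolemy, one can rotate by $\pi/3$ about $p_1$, which maps $p_2$ to $p$. The image $q^\ast$ of $q$ satisfies $\dist(q,q^\ast)=\dist(q,p_1)$ and $\dist(q^\ast,p)=\dist(q,p_2)$, and the angle computation shows $q^\ast\in[qp]$.

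The main point requiring care is not the metric identity but the degenerate or configuration issues. One must check that $q\ne p_1,p_2$; this holds by fullness, since pendant vertices have degree $1$ and $q$ has degree $3$. One must also justify that $q$ and $p$ lie on opposite sides of $(p_1p_2)$, so that the concyclicity argument applies. This follows because $\angle p_1qp_2=2\pi/3<\pi$ places $q$ strictly on one side, and $p$ is chosen on the other. Finally, $S'$ should be regarded only as a connected set, not necessarily locally minimal, since the extension $[qp]$ may cross other edges. The statement only claims equality of lengths, so this causes no problem.
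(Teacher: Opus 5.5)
Your proposal is correct and follows essentially the same route as the paper: opposite angles $2\pi/3$ and $\pi/3$ give concyclicity of $q,p_1,p,p_2$, the inscribed angle theorem gives $\angle p_1qp=\pi/3$ so that $[qp]$ prolongs the third edge, and Ptolemy's theorem gives $\dist(q,p)=\dist(q,p_1)+\dist(q,p_2)$. Your rotation cross-check and the remarks on nondegeneracy ($q\neq p_1,p_2$, and $p$, $q$ lying on opposite sides of $(p_1p_2)$) are harmless additions that the paper leaves implicit.
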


\begin{proof}
    Clearly, $\angle p_1pp_2 = \pi/3$, $\angle p_1qp_2 = 2\pi/3$ and the line $(p_1,p_2)$ separates $p$ and $q$. Hence $p,p_1,p_2$ and $q$ are cocircular. Then $\angle p_1qp = \angle p_1p_2p = \pi/3$, so $[pq]$ is a continuation of the remaining edge adjacent to $q$ in $S$.
By Ptolemy's theorem 
\[
|pq| \cdot |p_1p_2| = |p_1q| \cdot |pp_2| + |p_2q| \cdot |pp_1|,
\]
so $|pq| = |p_1q| + |p_2q|$ and $\H(S') = \H(S)$.    
\end{proof}

In higher dimensions there is no such beautiful algorithm, see the discussion of obstacles in~\cite{smith1992find}.

\subsection{Stability of finite solutions}

\begin{lemma} \label{lm:stab}
 Let $\St$ be a minimal Steiner tree for an $n$-point set $\mathcal{A}$ with topology $R \in D(T)$. 
 Assume that any locally minimal tree for $\mathcal{A}$ with a topology not in $D(T)$ has length at least $\H(\St) + \varepsilon$.
 Then for any shift of each terminal by a distance less than $\varepsilon/(5n)$, any Steiner tree for the shifted set of points will have a topology from $D(T)$. 
\end{lemma}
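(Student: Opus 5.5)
We compare lengths of Steiner trees before and after the shift. Let $\mathcal{A}'$ be obtained from $\mathcal{A}$ by moving each terminal $a_i$ to $a_i'$ with $\dist(a_i,a_i') < \delta := \varepsilon/(5n)$. The basic tool is the elementary observation that attaching segments perturbs lengths in a controlled way. Take any connected set $S$ containing $\mathcal{A}$. Adding the $n$ segments $[a_i a_i']$ produces a connected set containing $\mathcal{A}'$, of length less than $\H(S) + n\delta$. The same construction works symmetrically from $\mathcal{A}'$ to $\mathcal{A}$. Consequently the Steiner lengths satisfy
\[
|L(\mathcal{A}') - L(\mathcal{A})| < n\delta = \varepsilon/5,
\]
where $L(\cdot)$ denotes the length of a Steiner tree.

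Next we argue by contradiction. Suppose a Steiner tree $\St'$ for $\mathcal{A}'$ has topology $Q \notin D(T)$. We want to transport $\St'$ back to a locally minimal tree for $\mathcal{A}$ with a topology outside $D(T)$ whose length is at most $\H(\St') + c\,n\delta$, with $c \le 4$. Combined with the hypothesis, this would give
\[
\H(\St) + \varepsilon \le L(\mathcal{A}) + 5n\delta \cdot \tfrac{4}{5} + \dots,
\]
a contradiction once the constants are tracked: $\varepsilon \le \varepsilon/5 + 4\varepsilon/5$ with strict inequalities. The transport is done through the length function of Proposition~\ref{pr:convex}. Let $T'$ be the full topology with $Q \in D(T')$; since $Q \notin D(T)$ and the classes $D(\cdot)$ partition the non-degenerate topologies, we have $T' \neq T$. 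Note that a Steiner tree is non-degenerate, or we treat degenerate topologies as limits. Consider the function $L_{T'}(\mathcal{A}; y)$ from Proposition~\ref{pr:convex}. It is $1$-Lipschitz in each terminal, because each terminal lies on exactly one edge. Hence
\[
\min_y L_{T'}(\mathcal{A}; y) \le \min_y L_{T'}(\mathcal{A}'; y) + n\delta = \H(\St') + n\delta.
\]
The minimizer of $L_{T'}(\mathcal{A};\cdot)$ exists by coercivity and convexity. It realizes a tree for $\mathcal{A}$ in which points may coincide. After deleting zero-length edges, this is a locally minimal tree whose topology lies in $D(T')$ or in some degenerate contraction.

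The main obstacle is this last step. The minimizer of $L_{T'}$ may collapse Steiner points onto each other or onto terminals, in ways not of the type allowed in $D(T')$. The resulting tree could then have a topology lying in $D(T)$, or fail to be locally minimal. I would handle this by working with the closure of the classes: compare instead with the minimum over all topologies outside $D(T)$, that is, the quantity $m := \inf$ of lengths of trees for $\mathcal{A}$ whose topology is not in $D(T)$. The hypothesis, together with the fact that minimizers of each $L_{T''}$ are locally minimal trees, shows $m \ge \H(\St) + \varepsilon$. If a collapse lands in $D(T)$, the collapsed configuration is itself a critical point of $L_T$. By the uniqueness in Proposition~\ref{pr:convex} it equals $\St$, which lets us redo the comparison with the extra slack $\varepsilon$. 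Counting the errors, we get $n\delta$ from transporting $\St'$ and $n\delta$ from the Steiner length comparison, plus at most a further $2n\delta$ if degenerate edges must be re-split. The total stays below $5n\delta = \varepsilon$, yielding $\H(\St') > L(\mathcal{A}')$, which is the desired contradiction.
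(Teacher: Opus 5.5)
Your mechanism is the paper's: transport trees between $\mathcal{A}$ and $\mathcal{A}'$ through the length functional of Proposition~\ref{pr:convex} and pay with the triangle inequality. Your bookkeeping is sharper, since $2n\delta$ of error would suffice. But there is a genuine gap exactly where you located it, and your patch does not close it.

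The hypothesis speaks only about locally minimal trees. A minimizer of $L_{T'}(\mathcal{A};\cdot)$ can place two Steiner points on a terminal and leave an angle smaller than $2\pi/3$ there. It is then not locally minimal, and the hypothesis says nothing about its length.

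Several parts of your patch fail:
- Your bound $m\ge\H(\St)+\varepsilon$ rests on ``minimizers of each $L_{T''}$ are locally minimal trees''. That is precisely the unproved claim, and the example below shows it is false.
- The case you do discuss is not the danger. An edge contraction of a full $T'\neq T$ never produces a topology in $D(T)$: keeping terminal degrees $\le 2$ and Steiner degrees $3$ forces contracting only terminal--Steiner edges with distinct ends, which lands in $D(T')$, disjoint from $D(T)$.
- Even if the minimizer were $\St$, you would only get $\min L_{T'}(\mathcal{A};\cdot)=\H(\St)$ and no contradiction. So ``redo the comparison with the extra slack $\varepsilon$'' has no content.
- The closing budget $n\delta+n\delta+2n\delta$ is not derived from anything.
- Steiner trees can be degenerate (a terminal at the centre of an equilateral triangle of terminals), so ``a Steiner tree is non-degenerate'' is false and that case needs an argument.

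With the hypothesis read literally, no patch can exist. Let $X,Y,Z$ be the vertices of an equilateral triangle with centre $o$, and let $a\in[oZ]$ with $\dist(a,o)=\eta>0$ small. For $\{a,X,Y,Z\}$:
- the star $[oX]\cup[oY]\cup[oZ]$ is the Steiner tree, and its topology lies in $D(T)$ for the full $T$ separating $\{a,Z\}$ from $\{X,Y\}$;
- it is the only locally minimal tree, because $X,Y,Z$ see the convex hull under the angle $\pi/3$ and so must be leaves, $\angle XaZ=\angle YaZ>2\pi/3>\angle XaY$, and Melzak's construction rules out the full topologies.

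So the hypothesis holds for every $\varepsilon>0$. Yet for $a'=2o-a$, a shift by $2\eta$, one has $\angle Xa'Y>2\pi/3$, no topology of $D(T)$ is realizable, and the Steiner tree leaves $D(T)$. Here the minimum of $L_{T'}(\mathcal{A};\cdot)$, for $T'$ separating $\{a,X\}$ from $\{Y,Z\}$, is attained by the star centred at $a$. That star is not locally minimal and is only $O(\eta^2)$ longer than the Steiner tree.

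The paper's proof does not confront this either. It applies the assumption directly to $S_{old}$, the minimizer supplied by Proposition~\ref{pr:convex}. In effect it uses the hypothesis in the form $\min L_{T'}(\mathcal{A};\cdot)\ge\H(\St)+\varepsilon$ for every full topology $T'\neq T$.

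Adopt that reading and your first chain finishes at once, with no collapse analysis. Realize $\St'$ as a configuration of some full $T'\neq T$:
- if $Q\notin D(T)$ is non-degenerate, then $Q\in D(T')$;
- if $Q$ has a terminal of degree $3$, split it into two coincident Steiner points, choosing among the three possible splits one that does not give $T$.

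Then
\[
\H(\St)+\varepsilon\le\min L_{T'}(\mathcal{A};\cdot)<\H(\St')+n\delta<\H(\St)+2n\delta,
\]
which contradicts $2n\delta<\varepsilon$.
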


\begin{proof}

Let $\mathcal{A} = \{a_1, \dots, a_n\}$, and let the shifted vertices form the set $\mathcal{B} = \{b_1, \dots, b_n\}$, where $b_i \in B_{\varepsilon/(5n)}(a_i)$.
    
Add segments $[a_ib_i]$ to $\St$, where $b_i$ is the shifted vertex; then replace all edge-segments $[a_ix]$ with $[b_ix]$, and consider the unique local minimum $\St_{new}$ for the terminal set $\mathcal{B}$ from Proposition~\ref{pr:convex}.
Each terminal has degree 1 or 2 in $R$, so by the triangle inequality 
\[
\H (\St_{new}) < \H (\St) + 2n \cdot \varepsilon/(5n) = \H (\St) + 2\varepsilon/5.
\]

Consider an arbitrary locally minimal tree $S$ for $\mathcal{B}$ with a topology not in $D(T)$.
Similarly to the previous paragraph, add segments $[a_ib_i]$ to $S$, replace all edge-segments $[b_ix]$ with $[a_ix]$ and consider the unique local minimum $S_{old}$ for the terminal set $\mathcal{A}$ from Proposition~\ref{pr:convex}. Similarly, each terminal has degree 1, 2 or 3, so
    \[
    \H (S_{old}) <  \H (S) + 3n \cdot \varepsilon/(5n) = \H (S) + 3\varepsilon/5.
    \]
By assumption $\H(\St) + \varepsilon < \H (S_{old})$; putting all together we have the inequality
\[
\H (\St_{new}) < \H(\St) + 2\varepsilon/5 < \H (S_{old}) - 3\varepsilon/5 < \H(S),
\] 
which finishes the proof.
\end{proof}

\subsection{Length formula for a planar tree} \label{subsec:maxwell}

In the following lemma, we identify the Euclidean plane with the complex plane $\mathbb{C}$.

\begin{lemma}[Maxwell-type formula] \label{lm:Maxwell}
Let $\St$ be a full locally minimal tree with terminals $p_1,p_2,\dots, p_n \in \mathbb C$. 
Suppose $n>1$ and define $c_k$ as the unit complex number representing the outer direction of 
the unique edge incident to $p_k$.  

Then one can write the length of the tree as
\begin{equation}\label{eq:maxwell}
\H (\St) = \sum_{k=1}^n \bar{c_k} p_k.
\end{equation}
In particular, the right-hand side of~\eqref{eq:maxwell} is a real number.
\end{lemma}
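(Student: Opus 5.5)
The plan is to write the length as a sum over edges and then regroup the terms vertex by vertex, so that all contributions from Steiner points cancel. A full locally minimal tree with $n>1$ terminals is a finite tree. Its vertices are the terminals $p_1,\dots,p_n$, each of degree $1$, and Steiner points $s_j$, each of degree $3$. Every edge is a straight segment, so $\H(\St)$ is the sum of the edge lengths.

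For an oriented edge from $u$ to $v$, let $e$ be the unit complex number in the direction $v-u$. Then $|v-u|=\bar e\,(v-u)$, because $\bar e (v-u)=|v-u|\,\bar e e=|v-u|$.

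At each vertex $w$ and each edge $[wv]$ incident to it, define the outer unit direction $e_{w,v}$. This is the direction pointing away from the rest of the edge, so $e_{w,v}=(w-v)/|w-v|$. This agrees with the convention for $c_k$, since the outer direction at $p_k$ points from the tree towards $p_k$. With this notation, the length of the edge $[uv]$ can be written symmetrically as
\[
|u-v| = \bar e_{u,v}\, u + \bar e_{v,u}\, v.
\]
This holds because $e_{v,u}=-e_{u,v}$, which gives $\bar e_{u,v}u+\bar e_{v,u}v=\bar e_{u,v}(u-v)=|u-v|$.

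Summing over all edges and regrouping by vertices gives
\[
\H(\St)=\sum_{w} \Bigl(\sum_{v\sim w} \bar e_{w,v}\Bigr) w .
\]

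At a terminal $p_k$ the inner sum has a single term, namely $\bar c_k$. At a Steiner point $s$, the three incident edges meet at pairwise angles $2\pi/3$, which is a property of locally minimal trees recalled above. The three outer unit directions at $s$ are therefore $e,\ e\omega,\ e\omega^2$ with $\omega=e^{2\pi i/3}$, and they sum to zero. Their conjugates then also sum to zero, so every Steiner point contributes nothing. This yields~\eqref{eq:maxwell}.

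The right-hand side is real because it equals a sum of edge lengths. No step is a real obstacle. The only points to check carefully are the sign convention for the outer direction and the fact that fullness guarantees every terminal has degree exactly one, so each $c_k$ is well defined. That fullness gives degree one is stated in the paper's introductory discussion of full Steiner trees.
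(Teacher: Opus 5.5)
Your proof is correct and is essentially the paper's argument. Both write each edge length as $\bar e\,(u-v)$, split it symmetrically between the two endpoints, and regroup by vertices. At each Steiner point the three outer unit directions sum to zero, so only the terminal terms $\bar c_k p_k$ remain.
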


\begin{proof}[Proof of Lemma~\ref{lm:Maxwell}]
Let $v_1,\ldots,v_{2n-2} \in \mathbb{C}$ be the vertices of $\St$ in the complex plane (they include the terminal points $v_k=p_k$ for $k=1,\dots, n$ and $n-2$ branching points) and let $E$ be the set of edges of $\St$.
Let $u(z) := \frac{z}{\lvert z\rvert} = e^{i\arg z}$ be the unit complex number representing the direction of the complex number $z$
so that $\lvert z\rvert = z\cdot \overline{u(z)}$.
One has
\begin{align*}
\H (\St) 
&= \sum_{(k,j) \in E} |v_k - v_j| 
= \sum_{(k,j) \in E} (v_k - v_j) \cdot \overline{u(v_k - v_j)}\\
&= \sum_{(k,j)\in E} \left ( v_k \cdot \overline{u(v_k-v_j)} + v_j \cdot \overline{u(v_j-v_k)} \right )
 = \sum_{k=1}^{2n-2} v_k \cdot \sum_{(k,j)\in E} \overline{u(v_k-v_j)}.
\end{align*}
If $v_k$ is a branching point, then the corresponding sum of directions is zero, so we have only a sum over the terminal points.
\end{proof}

The following corollary follows from the fact that the length does not depend on translation.

\begin{corollary} \label{cor:windrose}
 One has $\sum_{k=1}^n \bar{c_k} = 0$.
\end{corollary}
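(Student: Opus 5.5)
The statement to prove is Corollary~\ref{cor:windrose}: $\sum_{k=1}^n \bar{c_k} = 0$. The plan is to apply the Maxwell-type formula of Lemma~\ref{lm:Maxwell} to two configurations: the original tree $\St$, and its translate $\St + w$ by an arbitrary vector $w \in \mathbb{C}$.

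\textbf{Step 1: the translate is still admissible with the same directions.} The set $\St+w$ is again a full locally minimal tree, with terminals $p_k + w$. Translation is an isometry, so it preserves the local minimality property, the topology and all edge directions. Hence the unit outer directions at the terminals are the same numbers $c_k$.

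\textbf{Step 2: compare the two lengths.} Lemma~\ref{lm:Maxwell} gives
\[
\H(\St + w) = \sum_{k=1}^n \bar{c_k}(p_k + w) = \H(\St) + w \sum_{k=1}^n \bar{c_k}.
\]
Since $\H$ is translation invariant, $\H(\St+w) = \H(\St)$. Therefore
\[
w \sum_{k=1}^n \bar{c_k} = 0 \quad \text{for every } w \in \mathbb{C}.
\]

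\textbf{Step 3: conclude.} Taking $w = 1$ yields $\sum_{k=1}^n \bar{c_k} = 0$.

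\textbf{Alternative.} One could instead argue directly, without invoking the lemma. Summing the unit direction vectors over all vertices, each edge contributes two opposite vectors, so the total is $0$. At a branching point the three unit vectors at mutual angles $2\pi/3$ also sum to $0$. What remains is exactly $\sum_k c_k = 0$, and taking the complex conjugate gives the claim.

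The only point needing care is Step 1, checking that translation preserves the hypotheses of Lemma~\ref{lm:Maxwell} together with the directions $c_k$. This is immediate, so there is no real obstacle.
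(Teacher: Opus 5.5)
Your proof is correct and is the paper's own argument: the paper justifies this corollary in one line, by applying Lemma~\ref{lm:Maxwell} to a translate of the tree and using that length is translation invariant. Your alternative direct argument is also valid. In it, the two contributions of each edge cancel, and the three unit vectors at each branching point sum to zero, so only the terminal directions $c_k$ remain.
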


Another corollary concerns a cocircular $\mathcal{A}$.

\begin{corollary} \label{cor:maxwellCocircular}
Let $\mathcal{A} = \{a_1,\dots, a_n\}$ be a finite subset of a unit circle $C$. Suppose that $\St$ is a full Steiner tree for $\mathcal{A}$. Then
\[
\H(\St) = \sum_{i=1}^n \cos \beta_i,
\]
where $\beta_i$ is the angle between the segment of $\St$ adjacent to $a_i$ and the radius $[oa_i]$. 
\end{corollary}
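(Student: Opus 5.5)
Corollary~\ref{cor:maxwellCocircular} will follow from Lemma~\ref{lm:Maxwell} by evaluating each summand $\bar{c_k} a_k$ and using that $|a_k| = 1$. I identify the plane with $\mathbb{C}$ and take the center of $C$ as the origin $o$, so $|a_k|=1$.

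First I check that Lemma~\ref{lm:Maxwell} applies. A Steiner tree is locally minimal, and fullness means every terminal has degree $1$, so each $a_k$ has a unique incident edge with outer unit direction $c_k$. If $n=1$ the tree is a point, the length is $0$, and the sum is empty, so I may assume $n>1$. The lemma then gives
\[
\H(\St) = \sum_{k=1}^n \bar{c_k}\, a_k ,
\]
and the right-hand side is real. Hence
\[
\H(\St) = \operatorname{Re}\sum_k \bar{c_k} a_k = \sum_k \operatorname{Re}(\bar{c_k} a_k).
\]

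Next I compute each real part. Since $|c_k|=|a_k|=1$, the quantity $\operatorname{Re}(\bar{c_k} a_k)$ is the cosine of the angle between the unit vectors $c_k$ and $a_k$. Here $a_k = a_k - o$ is the outward radial direction at $a_k$. The vector $c_k$ is the outer direction of the edge, pointing from the edge towards $a_k$. So the angle between $c_k$ and $a_k$ is the angle between the edge direction (from its other endpoint to $a_k$) and the direction from $o$ to $a_k$. This is exactly $\beta_i$, the angle between the segment and the radius $[oa_i]$, when both are oriented towards $a_i$.

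The only delicate point is fixing this orientation convention, namely checking that the sign is $+\cos\beta_i$ rather than $-\cos\beta_i$. By Lemma~\ref{lm:conv} the tree lies in the disk $\conv \mathcal{A}$. Therefore the edge reaches $a_i$ from inside the disk, and the unoriented angle between the segment and the radius is at most $\pi/2$. This is consistent with reading $\beta_i$ as the angle between the two vectors pointing into $a_i$, whose cosine is nonnegative. Summing over $k$ gives the formula.
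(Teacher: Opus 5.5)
Your proof is correct and is essentially the paper's argument: apply Lemma~\ref{lm:Maxwell} with $|a_k|=1$, use that the sum is real, and identify $\operatorname{Re}(\bar{c_k}a_k)$ with $\cos\beta_k$; the paper phrases this last step as $\bar{c_k}a_k=e^{\pm i\beta_k}$. Your appeal to Lemma~\ref{lm:conv} to get $\beta_i\le\pi/2$ only settles what ``angle between segment and radius'' means, which the paper leaves implicit; note also that $\conv\mathcal{A}$ is the inscribed polygon, which is contained in the disk rather than equal to it.
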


\begin{proof}
    By Lemma~\ref{lm:Maxwell} 
    \[
    \H(\St) = \sum_{i=1}^n e^{\pm \beta_i},
    \]
    where a sign depends on which of the two possible rays forming an angle $\beta_i$ with the radius $[oa_i]$ contains the segment of $\St$. Since $\H(\St)$ is a real number, we have
    \[
    \H(\St) = \Re \sum_{i=1}^n e^{\pm \beta_i} = \sum_{i=1}^n \Re e^{\pm \beta_i} = \sum_{i=1}^n \cos \beta_i.
    \]
\end{proof}

Next, the stability of solutions can be combined with the Maxwell-type formula as follows. 
Let $v_1, \dots, v_n$ be points in the plane for which the Steiner tree $\St_v$ with topology $T$ is unique and full. 
Let $u_1, \dots, u_n$ be the points obtained by shifting $v_i$, such that $|u_j - v_j| < \varepsilon$ for each $j$.
By Lemma~\ref{lm:stab}, for sufficiently small $\varepsilon$, the Steiner tree for the new points $\St_u$ preserves the topology from $D(T)$. 
For an even smaller $\varepsilon$, the tree $\St_u$ has topology $T$, because the possibility of drawing a full tree with a given topology is determined by a finite system of strict inequalities. 

Then by the Maxwell-type formula, the lengths of the Steiner trees are expressed as
\[
\H(\St_v) = \sum_{j=1}^n \bar{c_j} v_j, \quad \quad \H(\St_u) = e^{i\alpha} \sum_{j=1}^n \bar{c_j} u_j,
\]
where $\alpha$ is the angle between the corresponding segments of the trees $\St_v$ and $\St_u$.

Accordingly, if
\[
\sum_{j=1}^n \bar{c_j} (u_j - v_j) \in \mathbb{R},
\]
then the corresponding tree segments are parallel. Moreover, if 
\[
\sum_{j=1}^n \bar{c_j} (u_j - v_j) = 0,
\]
then additionally $\H(\St_v) = \H(\St_u)$.

\subsection{Minimum spanning trees. Steiner ratio}

A reasonably good approximation to the NP-hard Steiner problem is the Minimum Spanning Tree (MST) problem, which can be solved in polynomial time. There are many polynomial-time algorithms, as well as heuristics for special classes of graphs. Prim's algorithm (first described by Jarn{\' i}k) will suffice for us. 

First, we choose an arbitrary vertex and an incident edge of minimum weight. The found edge and the two vertices it connects form a tree. Consider all edges with one endpoint in the current tree and the other outside it; from these edges, the edge of the smallest weight is chosen. The edge selected at each step is attached to the tree. The tree continues to grow until all vertices of the original graph are included.

The Steiner ratio of a metric space $X$ is defined as the supremum of the ratio between the length of a minimum spanning tree and that of a Steiner minimal tree over finite sets $\mathcal{A} \subset X$.

\begin{proposition}[Graham--Hwang, 1976~\cite{graham1976remarks}] \label{pr:ratio}
For any Euclidean space $\mathbb{R}^d$, the Steiner ratio is at most $\sqrt{3}$.
\end{proposition}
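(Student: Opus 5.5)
We need to prove the Graham--Hwang statement: for any finite $\mathcal{A}\subset\mathbb{R}^d$, the minimum spanning tree has length at most $\sqrt{3}$ times the length of a Steiner minimal tree $\St$ for $\mathcal{A}$.

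The plan is to compare a spanning tree built from $\St$ with $\St$ itself, working one full component at a time. First, split $\St$ at the terminals of degree $2$ or $3$ into full Steiner trees. Each full component connects its own terminals, and a spanning tree of $\mathcal{A}$ is obtained by gluing spanning trees of the components' terminal sets. So the MST of $\mathcal{A}$ has length at most the sum, over full components $S$ with terminal set $\mathcal{A}_S$, of the MST length of $\mathcal{A}_S$. It therefore suffices to show $\mathrm{MST}(\mathcal{A}_S)\le\sqrt3\,\H(S)$ for a full Steiner tree $S$.

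Next, reduce to the plane as far as possible. The three edges at each Steiner point are coplanar with angles $2\pi/3$, but the tree as a whole need not be planar. Instead, argue by induction on the number of terminals $n$ of a full tree, with the base cases $n=2$ trivial and $n=3$ handled directly. For $n=3$, with a Steiner point $q$ and legs of lengths $a\ge b\ge c$, the spanning tree using two of the sides costs at most $\sqrt{a^2+ab+b^2}+\sqrt{a^2+ac+c^2}$. This should be compared with $\sqrt3(a+b+c)$, noting the MST takes the two shortest sides.

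For the inductive step, pick a Steiner point $q$ adjacent to two terminals $p_1,p_2$; such a point exists since the tree is full and finite. Replace $p_1,p_2$ by the single point $p$ such that $pp_1p_2$ is equilateral in the plane of the three edges at $q$, on the far side of the line $(p_1p_2)$. The Ptolemy argument of Lemma~\ref{lm:Melzak} is local, living in the $2$-plane of the tripod at $q$, so it works verbatim in $\mathbb{R}^d$. It gives a tree $S'$ on $(\mathcal{A}_S\setminus\{p_1,p_2\})\cup\{p\}$ of equal length. However, $S'$ is merely some connected tree, not necessarily minimal, which is fine since we only need an upper bound for $\mathrm{MST}$. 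Then we must bound $\mathrm{MST}(\mathcal{A}_S)$ by $\mathrm{MST}$ of the reduced set plus a correction, and compare that correction with the lost length $|p_1q|+|p_2q|$.

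The main obstacle is that this naive Melzak induction loses too much. The correct Graham--Hwang bound instead uses the local estimate at each Steiner point. The approach I would try first is the following. Orient each terminal edge and charge the spanning tree edges to triples of tree edges at Steiner points. Show that for a tripod with legs $x,y,z$ meeting at angles $2\pi/3$, the two shortest of the three pairwise distances sum to at most $\sqrt3(x+y+z)$. This holds because each pairwise distance satisfies $\sqrt{x^2+xy+y^2}\le\frac{\sqrt3}{2}(x+y)\cdot\frac{2}{\sqrt3}$... and more precisely $\sqrt{x^2+xy+y^2}\le x+y$, while the sum of the two shortest is at most $\frac{2}{3}\cdot\sqrt3\cdot$(perimeter-type bound).

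Gluing these local spanning structures requires contracting edges between Steiner points. Inductively, shrink an internal edge to zero while keeping the terminal-to-terminal spanning cost controlled. This is where the care lies, and where the constant $\sqrt3$ rather than the sharp $2/\sqrt3$ is what makes the bookkeeping close.
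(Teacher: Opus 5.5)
The paper gives no proof of this proposition (it simply cites Graham--Hwang), so your proposal has to stand on its own. As written it never reaches a complete argument.

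You are right to abandon the Melzak induction. Since $\H(S')=\H(S)$, no length is freed. Yet converting a spanning tree of the reduced set into one of $\mathcal A_S$ means re-attaching every edge at $p$ to $p_1$ or $p_2$ and adding $[p_1p_2]$, at an extra cost of order $(\deg p+1)\,|p_1p_2|$ with nothing to pay for it.

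The replacement plan, however, is only gestured at, and as stated it cannot be iterated. The tripod estimate (the two shortest pairwise distances are at most $\sqrt3(x+y+z)$, in fact at most $\frac43(x+y+z)$) concerns the three neighbours of a single Steiner point. At an internal Steiner point those neighbours are Steiner points, so the edges it produces do not join terminals. Once you contract an edge between two Steiner points, the merged vertex has degree $4$ and the $2\pi/3$ angle condition --- the only input to the local estimate --- is lost. The sentence ``this is where the care lies'' marks exactly the step that must be supplied, and the slack between $\sqrt3$ and $2/\sqrt3$ does not supply it. Moreover, for $d\ge3$ the true ratio is known to exceed $2/\sqrt3$, so that value is not sharp there.

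A smaller slip: in the case $n=3$, the two sides you display, $\sqrt{a^2+ab+b^2}+\sqrt{a^2+ac+c^2}$, are the two longest. Their sum exceeds $\sqrt3(a+b+c)$ when $b,c$ are small. You need the two shortest: $\sqrt{b^2+bc+c^2}+\sqrt{a^2+ac+c^2}\le a+b+2c\le\frac43(a+b+c)$.

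The missing idea is to delete a terminal rather than merge two, and to let the induction hypothesis re-optimize. Induct on $\#\mathcal A$, assuming that for every finite set $\mathcal B$ with fewer points the minimum spanning tree is at most $\sqrt3$ times as long as a Steiner tree for $\mathcal B$.

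If a Steiner tree $S$ for $\mathcal A$ is not full, your splitting into full components (each a Steiner tree for its own smaller terminal set) finishes the step. If $S$ is full with $n\ge3$ terminals, take a Steiner point $q$ adjacent to terminals $p_1,p_2$, labelled so that $|qp_1|\ge|qp_2|$. Since $\angle p_1qp_2=2\pi/3$,
\[
|p_1p_2|^2=|qp_1|^2+|qp_1|\,|qp_2|+|qp_2|^2\le 3\,|qp_1|^2 .
\]
Removing the edge from $q$ to $p_1$ (keeping $q$) leaves a connected set containing $\mathcal A\setminus\{p_1\}$ of length $\H(S)-|qp_1|$. So a Steiner tree for $\mathcal A\setminus\{p_1\}$ is no longer than that, and
\[
\mathrm{MST}(\mathcal A)\le \mathrm{MST}(\mathcal A\setminus\{p_1\})+|p_1p_2|\le \sqrt3\bigl(\H(S)-|qp_1|\bigr)+\sqrt3\,|qp_1|=\sqrt3\,\H(S).
\]
The hypothesis is applied to a Steiner tree of the smaller set, whose topology may be completely different. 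It is not applied to the pruned tree, which has a degree-two vertex at $q$. This is why the induction has to range over all smaller point sets rather than over full trees of a fixed shape, and why no contraction of internal edges or global charging scheme is needed.
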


It is worth noting that a direct application of the triangle inequality gives that the Steiner ratio of an arbitrary metric space is at most two, see~\cite{cieslik2013steiner}.

\subsection{Steiner tree on a surface}

\begin{theorem} \label{th:tpoints}
Let $d > 2$. A Steiner tree for arbitrary $t$ points on the sphere $\mathbb{S}^{d-1}$ has length at most
\[
c_d t^\frac{d-2}{d-1}.
\]
Moreover, $c_d = 2+o_d(1)$ and $c_d \leq c_3 = 11.78\dots$ for every $d$.
\end{theorem}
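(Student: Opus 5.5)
We prove Theorem~\ref{th:tpoints} by bounding the Steiner tree length by the length of an explicit connected set containing the $t$ points, which we build by a greedy grid (or net) construction on the sphere. Minimality of the Steiner tree then gives the bound. Fix a scale $r>0$, to be chosen later. Cover $\mathbb{S}^{d-1}$ by $N(r)$ caps of radius $r$, for instance via a maximal $r$-separated set; a volume argument gives $N(r)\le C_d' r^{-(d-1)}$. We then build the competitor in two layers.

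\textbf{The two layers.} The first layer is a short connected network $\Gamma_r$ joining the centers of all caps. A minimum spanning tree on the net has at most $N(r)$ edges, each of length $O(r)$, so $\H(\Gamma_r)\le C'' r^{-(d-2)}$. The second layer attaches each of the $t$ points to the center of its cap by a segment of length at most $r$, which costs at most $tr$. The total length is therefore
\[
\H(\St)\le C'' r^{-(d-2)} + t r .
\]
Optimizing with $r\asymp t^{-1/(d-1)}$ gives $c_d\, t^{(d-2)/(d-1)}$. For small $t$ (say $t^{1/(d-1)}$ small) we use instead the trivial bound: the star from the origin, of length $t$, or the MST bound $2(t-1)$. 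This yields the claim with some constant $c_d$.

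\textbf{Sharpening the constant.} To get $c_d=2+o_d(1)$ and $c_d\le c_3$, we replace the crude net by a more careful argument. A cleaner route is a direct MST bound. Order the $t$ points and run Prim's algorithm. The caps of radius $\ell_i/2$ around the points, where $\ell_i$ is the length of the $i$-th MST edge, are pairwise disjoint; this is the standard packing property of MST edges. Comparing the sum of cap areas, each $\gtrsim \ell_i^{d-1}$, with the total area $|\mathbb{S}^{d-1}|$, and applying H\"older's inequality, gives
\[
\sum_i \ell_i \le t^{\frac{d-2}{d-1}}\Big(\sum_i \ell_i^{d-1}\Big)^{\frac1{d-1}} \le C\, t^{\frac{d-2}{d-1}} .
\]
Since the Steiner tree is no longer than the MST, the same bound holds for it. Proposition~\ref{pr:ratio} is not needed in this direction. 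Computing the constant requires:
\begin{itemize}
\item a precise lower bound for the area of a spherical cap of chordal radius $\ell/2$ relative to $|\mathbb{S}^{d-1}|$;
\item handling long edges, where $\ell_i$ is comparable to $2$ and caps are no longer small, by splitting off edges longer than a threshold and noting that each such edge has length at most $2$.
\end{itemize}

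\textbf{Main obstacle.} The hard step is the explicit numerics: showing that $c_d\to2$ and that $c_d$ is maximized at $d=3$. As $d$ grows, the normalized cap area behaves like $(\ell/2)^{d-1}$ times a factor $\sim\sqrt{d}$, and the dimensional factors raised to the power $1/(d-1)$ tend to $1$; the leading constant $2$ comes from the diameter. We would write $c_d$ as an explicit expression from the cap-volume ratio (an incomplete Beta integral), check it for $d=3$ numerically to obtain $11.78\dots$, and prove monotonicity in $d$ by elementary estimates on $\Gamma$-function ratios. That monotonicity is the part most likely to need case checks for small $d$.
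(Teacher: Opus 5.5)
Your first construction (maximal separated net, spanning tree on the centers, each point attached to its nearest center, scale optimized) is the paper's proof. However, you treat it as giving only \emph{some} $c_d$, and you rest the quantitative part ($c_d=2+o_d(1)$, $c_d\le c_3$) on the MST--H\"older argument. That argument uses a false claim: the caps of radius $\ell_i/2$ around the points attached by Prim's algorithm are not pairwise disjoint. Take two tight clusters at distance $D$. When Prim first crosses from one to the other it adds an edge of length about $D$. The cap of radius about $D/2$ around the newly attached point then contains the whole second cluster, and hence the tiny caps of every point attached after it. Centering the caps at edge midpoints does not fix this: two unit MST edges meeting at a right angle have overlapping diametral balls. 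So your bound on $\sum\ell_i^{d-1}$, and with it the constants, is unsupported. A bound on $\sum\ell_i^{d-1}$ needs a genuinely different packing argument, and it is not clear that its constant gives $c_d\to 2$.

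No second argument is needed, because the paper simply tracks constants in your first construction. Take disjoint caps of angular radius $\varphi$ with $\sin\varphi=\varepsilon=c\,t^{-1/(d-1)}$. By maximality the doubled caps cover the sphere, so Prim's algorithm on the $k$ centers uses edges of length at most $2\sin 2\varphi<4\varepsilon$, and each point is attached at cost at most $2\varepsilon$. The sharp cap-area bound of B\"or\"oczky--Wintsche gives $k\le\sqrt{2\pi d}/\varepsilon^{d-1}$. Hence $\H(\St)\le 2\bigl(2\sqrt{2\pi d}\,c^{2-d}+c\bigr)t^{\frac{d-2}{d-1}}$. Optimizing in $c$ gives the explicit $c_d=2\bigl(2\sqrt{2\pi d}(d-2)\bigr)^{\frac1{d-1}}\frac{d-1}{d-2}$, which tends to $2$ and equals $11.78\dots$ at $d=3$; no incomplete Beta integrals are needed.

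If you prefer an MST route, replace H\"older by a threshold count. The number of MST edges longer than $r$ equals the number of components of the graph joining points at distance $\le r$, minus one. This is less than the size of a set with pairwise chordal distances $>r$, hence at most $Ar^{-(d-1)}$ with $A=2^{d-1}\sqrt{2\pi d}$ by the same cap bound. Then
\[
\H(\St)\le\int_0^2\#\{e:\ell_e>r\}\,dr\le\int_0^\infty\min\bigl(t,Ar^{-(d-1)}\bigr)\,dr=\frac{d-1}{d-2}\,A^{\frac{1}{d-1}}\,t^{\frac{d-2}{d-1}},
\]
which also gives $c_d\to2$ (about $8.3$ at $d=3$).

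Your separate treatment of small $t$ by the star bound is a real detail that the paper leaves implicit: its choice $\sin\varphi=c\,t^{-1/(d-1)}$ requires $t\ge c^{d-1}$.
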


\begin{proof}
    Consider arbitrary points $a_1,\dots, a_t$ on the sphere $\mathbb{S}^{d-1}$; let $\St$ be a Steiner tree for $\{a_i\}$.
    Let $\varepsilon = ct^{-\frac{1}{d-1}}$, where we will choose the constant $c$ later. 
    Let the angle $\varphi$ be such that $\sin \varphi = \varepsilon$ and put $\delta = 2\sin (\varphi/2)$.
    This gives that a spherical cap $B_{\delta} (x) \cap \mathbb{S}^{d-1}$, where $x \in \mathbb{S}^{d-1}$ has angular radius $\varphi$, i.e. consists of all points $x' \in \mathbb{S}^{d-1}$ such that $\angle xox' < \varphi$.
    
    Consider a maximal (with respect to inclusion) set of points $x_1,\dots, x_k \in \mathbb{S}^{d-1}$ for which the spherical caps 
    $B_{\delta} (x_i) \cap \mathbb{S}^{d-1}$ are pairwise disjoint. 
    Then the union of the spherical caps $B_{2 \sin \varphi} (x_i) \cap \mathbb{S}^{d-1}$ covers the sphere $\mathbb{S}^{d-1}$.
    Connect the points $x_1,\dots,x_k$ with a spanning tree, and then each point $a_i$ with the closest of the points $\{x_j\}$.
    Prim's algorithm adds an edge no longer than $2 \sin 2 \varphi$ at each iteration, so
    \[
    \H(\St) \leq \H(MST) +  2 t\sin \varphi \leq 2(k-1)\sin 2\varphi + 2 t\sin \varphi < 2(2k+t)\varepsilon,
    \]
    since $\sin 2\varphi < 2\sin \varphi$.
    Estimate $k$; a direct volume estimate gives
    \[
    k \leq \frac{\mathcal{H}^{d-1}(\mathbb{S}^{d-1})}{\mathcal{H}^{d-1}(B_{\delta} (x_i) \cap \mathbb{S}^{d-1})} \leq \frac{\sqrt{2\pi d}}{\sin^{d-1}\varphi} = \frac{\sqrt{2\pi d}}{\varepsilon^{d-1}},
    \]
    where the last inequality is proved in~\cite{boroczky2003covering}.
    
    Expressing everything in terms of $t$ and $c$, we obtain
    \[
    \H(\St) \leq 2 \left( 2\sqrt{2\pi d} \cdot c^{2-d} + c \right) t^\frac{d-2}{d-1}.
    \]
    Optimizing over $c$, we get the following upper bound, which is obtained by the substitution of $c_{min} = (2\sqrt{2\pi d}(d-2))^{1/(d-1)}$
    \[
    \H(\St) \leq 2 \left (2\sqrt{2\pi d}(d-2) \right)^{1/(d-1)} \frac{d-1}{d-2} t^\frac{d-2}{d-1} = \left(2+o_d(1) \right) t^\frac{d-2}{d-1}.
    \]    
    A numerical analysis of small cases finishes the proof.
\end{proof}

The estimate for $k$ can be improved using classical results on spherical codes by Kabatiansky and Levenshtein~\cite{kabatiansky1978bounds} and Sidel'nikov~\cite{sidel1974new}.

\subsection{Two bounds on length of a set}

The following bound is based on the fact that a subset of the optimal structure inherits certain optimality properties.

\begin{lemma} \label{lm:lengthbound}
Let $\St$ be a Steiner set for $\mathcal{A}$ and $\H(\St) < +\infty$. 
Suppose that $X \subset \St$ and $Y \subset \mathbb{R}^d$ are such that
\[
\St \cup \mathcal A \cup Y \setminus X
\]
is connected. Then $\H(X) \leq \H(Y)$.
\end{lemma}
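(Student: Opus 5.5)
The plan is a direct competitor argument. Put $S' := (\St \setminus X) \cup Y$. Then $S' \cup \mathcal A = (\St \cup \mathcal A \cup Y) \setminus X$ up to points of $\mathcal A \cap X$, so we first check that removing $X$ from $\St$ but keeping $\mathcal A$ gives exactly the set in the hypothesis. Since $X \subset \St$, we have $(\St \setminus X)\cup \mathcal A \cup Y \supseteq (\St\cup\mathcal A\cup Y)\setminus X$. The difference consists only of points of $X\cap(\mathcal A\cup Y)$.

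To avoid this subtlety, I would interpret the hypothesis, as the paper's notation suggests, as the connectedness of $(\St\setminus X)\cup\mathcal A\cup Y$. That is the natural reading of $\St\cup\mathcal A\cup Y\setminus X$ with $X$ removed from $\St$ only.

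Under this reading, $S'\cup\mathcal A$ is connected. The definition of a Steiner set then gives $\H(\St)\le \H(S')\le \H(\St\setminus X)+\H(Y)$. Since $\H(\St)<\infty$ and $X\subset\St$, we have $\H(\St)=\H(\St\setminus X)+\H(X)$, provided $X$ is $\H$-measurable. We can subtract the finite quantity $\H(\St\setminus X)$ and obtain $\H(X)\le\H(Y)$.

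The one technical point is measurability of $X$, which is needed for additivity. In the applications $X$ will be $\St$ intersected with an open or closed ball, hence Borel. If $X$ is an arbitrary subset, the argument becomes the following. Since $\H$ is a Borel-regular outer measure, we can choose a Borel (in fact $G_\delta$) set $X'\supseteq X$ with $\H(X'\cap\St)=\H(X)$. Then $\H(\St) = \H(\St\cap X') + \H(\St\setminus X')$. The competitor $(\St\setminus X)\cup Y$ satisfies $\H((\St\setminus X)\cup Y) \le \H(\St\setminus X')+\H(X'\setminus X\cap \St)+\H(Y)$, which is not enough in general. So I would simply assume $X$ is Borel (or $\H$-measurable), which covers every use in the paper.

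I expect no serious obstacle beyond this measurability convention and the correct reading of the set-theoretic expression. The step to watch is the subtraction: it requires $\H(\St\setminus X)<\infty$, which follows from $\H(\St)<\infty$.
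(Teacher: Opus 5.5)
Your proof is correct and is essentially the paper's argument: compare $\St$ with the competitor $(\St\setminus X)\cup Y$, then use additivity of $\H$ on $\St$ and subtract the finite quantity $\H(\St\setminus X)$. The paper's version puts $\mathcal A$ into the competitor, which is unnecessary, and writes equalities where only your inequality holds in general. Your reading of the hypothesis as connectedness of $(\St\setminus X)\cup\mathcal A\cup Y$ is exactly how the paper applies the lemma (e.g.\ in the proof of Theorem~\ref{th:main}), and the measurability of $X$ that you assume is also tacitly used in the paper's additivity step.
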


\begin{proof}
    Consider a competitor $\St' = \St \cup \mathcal A \cup Y \setminus X$. It is connected and 
    \[
    \H(\St') = \H(\St) - \H(X \setminus Y) + \H(Y \setminus X) = \H(\St) - \H(X) + \H(Y).
    \]
    Since $\St$ is a Steiner set, $\H(\St) \leq \H(\St')$ and therefore $\H(X) \leq \H(Y)$.
\end{proof}

The following coarea inequality can be obtained from Theorem 2.1 in~\cite{paolini2013existence} by substituting $X = \mathbb{R}^d$ and $f (\cdot) = \dist(\cdot,x)$ (which is 1-Lipschitz by the triangle inequality).

\begin{lemma}[Coarea inequality] \label{lm:coarea}
Let $S$ be a nonempty measurable subset of $\mathbb{R}^d$ and $x \in \mathbb{R}^d$ be an arbitrary point. Then
\[
\H(S) \geq \int_{r=0}^\infty \# (\partial B_r(x) \cap S) d r.
\]
\end{lemma}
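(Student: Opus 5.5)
The statement to prove is the coarea inequality (Lemma~\ref{lm:coarea}): $\H(S)\ge\int_0^\infty \#(\partial B_r(x)\cap S)\,dr$. The plan is to deduce it from the Eilenberg inequality, i.e. Theorem~2.1 of~\cite{paolini2013existence}, applied to the $1$-Lipschitz function $f(y)=\dist(y,x)$. For a self-contained argument I would reprove this special case directly from the definition of $\H$ through $\delta$-covers.

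The first step is the covering estimate. Fix $\delta>0$ and take a countable cover of $S$ by sets $E_j$ with $\diam E_j<\delta$. Since $f$ is $1$-Lipschitz, each image $f(E_j)$ lies in an interval of length at most $\diam E_j$. Hence
\[
\int_0^\infty \chi_{f(E_j)}(r)\,dr\le \diam E_j,
\]
where we use outer Lebesgue measure if measurability of $f(E_j)$ is in doubt. We may replace $E_j$ by its closure, which does not change the diameter, so that the function $r\mapsto\chi_{f(\overline{E_j})}(r)$ is Borel.

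The second step is the counting function. Define
\[
N_\delta(r)=\sum_j \chi_{f(E_j)}(r).
\]
Summing the covering estimate over $j$ gives $\int N_\delta\,dr\le\sum_j\diam E_j$. Taking the infimum over $\delta$-covers, this is at most $\mathcal H^1_\delta(S)$, up to the normalisation in which $\mathcal H^1$ of a segment equals its length.

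The key observation concerns points of the sphere. If $\partial B_r(x)\cap S$ contains $m$ distinct points with pairwise distances at least $\eta$, then for $\delta<\eta$ no set $E_j$ contains two of them. These $m$ points must therefore lie in $m$ distinct sets $E_j$, each meeting $\partial B_r(x)$, so $N_\delta(r)\ge m$. It follows that $\liminf_{\delta\to0}N_\delta(r)\ge\#(\partial B_r(x)\cap S)$, and this also holds when the count is infinite.

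The third step is to pass to the limit. Choose $\delta_k\to0$ and covers nearly realizing $\mathcal H^1_{\delta_k}(S)$. Fatou's lemma then gives
\[
\int\#(\partial B_r(x)\cap S)\,dr\le\liminf_k\int N_{\delta_k}\le\lim_k\mathcal H^1_{\delta_k}(S)=\H(S).
\]
The main technical obstacle is measurability of $r\mapsto\#(\partial B_r(x)\cap S)$. I would handle it in one of two ways: interpret the integral as a lower integral, which Fatou's lemma tolerates, or pass to Borel hulls via closed covers, as in Federer 2.10.25. In the application only finite unions of segments arise, and for these measurability is clear.
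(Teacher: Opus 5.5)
Your proposal is correct and takes essentially the paper's route: the paper likewise just invokes Theorem~2.1 of \cite{paolini2013existence} (Eilenberg's inequality) with $X=\mathbb{R}^d$ and the $1$-Lipschitz function $f=\dist(\cdot,x)$. Your optional self-contained reproof is the standard covering argument for the case $k=0$ and is sound; since $\#(\partial B_r(x)\cap S)\le\liminf_k N_{\delta_k}(r)$ and the right-hand side is Borel (thanks to your closed covers), the argument actually bounds the upper integral, so you never need to fall back on a lower integral.
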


\subsection{Cocircular terminal sets}

\begin{theorem}[Rubinstein--Thomas~\cite{rubinstein1992graham}, 1992] \label{th:cocircular}
    Suppose that a given set $P = \{p_1, p_2, \dots, p_n\}$ lies on a circle $C \subset \mathbb{R}^2$ of radius $r$ with a given cyclic order.
    If at most one of the segments $[p_ip_{i+1}]$, $i = 1,\dots, n$ (here $n+1 = 1$) has length strictly greater than $r$, then a Steiner tree for $P$ has no Steiner points.
\end{theorem}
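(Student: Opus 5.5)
The plan is to argue by contradiction: suppose a Steiner tree $\St$ for $P$ has a Steiner point. We would then find a competitor without Steiner points that is strictly shorter, contradicting minimality via Lemma~\ref{lm:lengthbound}.

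\textbf{Step 1: reduce to one full component.} First we decompose $\St$ into full components, i.e.\ the closures of the connected components of $\St\setminus P$. Since $P$ is finite, $\St$ is a finite planar embedded tree. Each full component $F$ is a full Steiner tree for its terminal set $Q=\{q_1,\dots,q_k\}\subset P$, and $F$ contains a Steiner point exactly when $k\ge 3$. Planarity and the absence of crossings in a Steiner tree imply that the components are non-crossing with respect to the cyclic order on $C$. Moreover, every arc of $C$ between two cyclically consecutive terminals $q_j,q_{j+1}$ of $F$ carries only points of $P$ that are joined to the rest of the tree through other components.

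We pick a component $F$ with $k\ge3$. By Lemma~\ref{lm:lengthbound}, applied with $X=F$ and $Y$ equal to the union of all chords $[q_jq_{j+1}]$ except one, minimality gives
\[
\H(F)\le \sum_{j=1}^{k}|q_jq_{j+1}|-\max_j|q_jq_{j+1}|.
\]
So it suffices to show that this inequality is strict in the reverse direction for every full tree on at least three cocircular terminals satisfying the hypothesis.

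\textbf{Step 2: transfer the gap hypothesis to $Q$.} The hypothesis must be carried from the gaps of $P$ to the gaps of $Q$. A gap of $Q$ corresponds to an arc containing possibly several gaps of $P$. Those points of $P$ lie in a different part of the tree, and that part is itself attached to $F$ only through terminals of $F$. Because of this, we can instead choose $F$ \emph{innermost}: one whose terminals, apart from at most one arc, are consecutive in $P$. Such an $F$ exists by the non-crossing structure, arguing by induction on the nesting. For an innermost $F$, all gaps of $Q$ except at most two are gaps of $P$, and hence have length at most $r$.

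\textbf{Step 3: the main inequality.} We normalize $r=1$ and use Corollary~\ref{cor:maxwellCocircular}, which gives $\H(F)=\sum_i\cos\beta_i$, where $\beta_i$ is the angle between the terminal edge at $q_i$ and the radius. We combine this with Corollary~\ref{cor:windrose}, which says the terminal directions sum to zero.

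Each chord of length at most $1$ subtends a central angle of at most $\pi/3$. The terminal edges leave their endpoints at angles that are multiples of $\pi/3$ relative to one another, since all edges of a full tree are parallel to three fixed directions. Using this, we aim to bound $\sum\cos\beta_i$ from below by the sum of the short chords. Concretely, we would pair each terminal $q_i$ with the arc to its neighbour and show that $\cos\beta_i$ dominates the corresponding half-chords, with strict inequality. Then only the chord over the possibly long gap is left uncounted.

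An alternative for this step is induction on $k$ via Melzak's reduction (Lemma~\ref{lm:Melzak}). Two pendant terminals in a full tree are consecutive in cyclic order, and the new vertex $p$ lies outside $C$ at controlled distance when the chord is at most $r$. The length is preserved, and one would check that the chord inequality survives the replacement. I expect this final inequality, the strict comparison between a full cocircular tree and the chord path in the presence of one long gap, to be the main obstacle. I would first test it on $k=3$ by direct computation, where the Fermat point lies inside a triangle with two sides at most $r$. Then I would attempt the Melzak induction, using Lemma~\ref{lm:stab} only if a perturbation to general position is needed.
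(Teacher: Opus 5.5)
The paper does not prove this statement; it only quotes it from Rubinstein--Thomas. Your proposal therefore has to stand on its own, and as written it does not. The decisive inequality is never proved, and the reduction meant to feed it produces the wrong hypothesis.

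In Step~2, an innermost $F$ only gives you that all gaps of $Q$ except \emph{two} have length at most $r$: the outer arc, and possibly the long gap of $P$. Steps~1 and~3 need at most one. Under the weaker hypothesis the strict inequality $\H(F)>\sum_j|q_jq_{j+1}|-\max_j|q_jq_{j+1}|$ is false. For $Q=\{\pm e^{\pi i/6},\pm e^{5\pi i/6}\}$ on the unit circle the gaps are $1,\sqrt3,1,\sqrt3$, and the full tree $\St_0$ of Section~\ref{sec:example} has length $2\sqrt3<2+\sqrt3$. Localizing to a single full component discards exactly the information that matters: the arcs of $Q$ containing other points of $P$ are densely filled by them. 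A correct argument has to compare with the chord path through all of $P$, or otherwise use the points of $P\setminus Q$. Separately, the existence of an innermost $F$ with $k\ge 3$ does not follow from the non-crossing structure alone, because single-chord components can hang off $F$ in several of its arcs.

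Step~3 contains no proof. For full trees with at most one long gap, the inequality you want is just the theorem restricted to full trees. For such cocircular points the minimum spanning tree is the cyclic chord path minus its longest chord, so nothing has been reduced.

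The termwise domination of adjacent half-chords by $\cos\beta_i$ is only announced. The angle $\beta_i$ is fixed by the position of $q_i$ relative to the three global edge directions of $F$, not by the two neighbouring chords, and you give no mechanism linking them.

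The Melzak alternative breaks down as stated. The new vertex $p$ lies outside $C$, so at the next step neither cocircularity, nor the gap hypothesis, nor Corollary~\ref{cor:maxwellCocircular} is available. The required comparison of chord sums before and after the reduction is also left unchecked.

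The case $k=3$ is indeed easy, since the inscribed angle at the vertex between the two short chords is at least $2\pi/3$. But it does not start a working induction. What is missing is precisely the substance of the Rubinstein--Thomas paper.
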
 

We need to adapt the result for the case of infinite $\mathcal{A}$.

\begin{corollary} \label{cor:5pointsaremany}
    Let $C \subset \mathbb{R}^2$ be a circle of radius $r$.
    Let $\mathcal{A} \subset C$ be an infinite compact set such that at most one connected component (arc) of $C \setminus \mathcal{A}$ has diameter greater than $r - \delta$ for some $\delta > 0$. Then a solution $\St$ to the Steiner problem for $\mathcal{A}$ has no Steiner points.
\end{corollary}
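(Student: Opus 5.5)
The plan is to reduce Corollary~\ref{cor:5pointsaremany} to Theorem~\ref{th:cocircular} by approximating $\mathcal{A}$ with finite subsets, and then to pass to the limit using the stability of the topology.

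\textbf{Finite approximations.} Since $\mathcal{A}$ is compact and every arc of $C\setminus\mathcal{A}$ except possibly one has diameter at most $r-\delta$, for every $\eta>0$ there is a finite set $\mathcal{A}_\eta\subset\mathcal{A}$ such that:
\begin{itemize}
\item $\mathcal{A}$ lies in the $\eta$-neighbourhood of $\mathcal{A}_\eta$;
\item all consecutive chords of $\mathcal{A}_\eta$ have length at most $r-\delta+2\eta<r$, except at most one, namely the chord spanning the long gap.
\end{itemize}
Theorem~\ref{th:cocircular} then says that every Steiner tree for $\mathcal{A}_\eta$ has no Steiner points. A tree with no Steiner points on cocircular terminals is a union of chords joining points of $\mathcal{A}_\eta$; since the angles at vertices are at least $2\pi/3$, it is in fact the path along consecutive chords with the longest one omitted.

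\textbf{Passing to the limit (main obstacle).} We cannot use a limit argument alone: Hausdorff limits of trees without Steiner points could in principle acquire them. So we argue locally.

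Suppose $\St$ has a Steiner point $s$. By Theorem~\ref{th:PaoSte}, $\St\cap B_\tau(s)$ is a regular tripod for some small $\tau$. Take the three branches leaving $s$ and follow each of them inside the open disc; by Lemma~\ref{lm:conv} the tree lies in $\conv\mathcal{A}$.

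Now localise. Choose a ball $U$ around $s$ whose boundary meets $\St$ in finitely many points; Corollary~\ref{cor:everyeps} makes this possible, since $s\notin\mathcal{A}$. Lemma~\ref{lm:lengthbound} shows that $\St\cap U$ is a Steiner tree for the finite set $\partial U\cap\St$. This gives a finite configuration, but its terminals are not on $C$, so a different route is needed.

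\textbf{Global comparison instead.} Every Steiner tree for $\mathcal{A}$ connects the same set, so the plan is to compare lengths directly.
\begin{itemize}
\item The path $P$ along $C$-chords between consecutive points of $\mathcal{A}$, omitting the long gap, is a competitor. Its length is the limit of $\H(\St_\eta)$, where $\St_\eta$ are Steiner trees for $\mathcal{A}_\eta$.
\item Conversely, any Steiner tree $\St$ for $\mathcal{A}$ connects $\mathcal{A}_\eta$, so $\H(\St)\ge\H(\St_\eta)\to\H(P)$.
\item Hence $\H(\St)=\H(P)$ and $\St\cup\mathcal{A}$ is a Hausdorff limit of the trees $\St_\eta$.
\end{itemize}

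To exclude a Steiner point $s$ in $\St$, replace a small tripod around $s$ and compare with the finite case. Pick three points of $\St$ near $s$ and project along the branches towards $\mathcal{A}$: each branch ends on $\mathcal{A}$ at points $a_1,a_2,a_3$, or spreads out into subtrees. Then add finitely many points $a_i$ so that the approximating set $\mathcal{A}'\supset\{a_1,a_2,a_3\}$ still satisfies the gap hypothesis.

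The restriction of $\St$ to the branch subtrees, completed by connections to $\mathcal{A}'$, produces a tree for $\mathcal{A}'$ that has a Steiner point and whose length is at most $\H(P')+o(1)$. Using the uniform margin $\delta$ in Lemma~\ref{lm:stab}, the minimum over topologies with a Steiner point exceeds $\H(P')$ by a definite amount once the chords are shorter than $r-\delta$. This gives a contradiction.

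Making this quantitative gap uniform in the number of points is the step I expect to be hardest. The first thing to try is to use Corollary~\ref{cor:maxwellCocircular}: the length of a full subtree equals $\sum\cos\beta_i$, and a Steiner point forces some angles $\beta_i$ to be bounded away from $\pi/2$ in a way incompatible with chords of length at most $r-\delta$.
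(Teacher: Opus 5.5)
Your proposal has a genuine gap. The only step that actually rules out a Steiner point is the last one, and you leave it open. The tool you invoke cannot supply it.

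Lemma~\ref{lm:stab} does not \emph{produce} a length gap from the chord condition. It takes a gap $\varepsilon$ for one fixed finite configuration as a hypothesis, and its stability radius $\varepsilon/(5n)$ shrinks as the number of points grows. In your scheme the sets $\mathcal{A}'$ must grow to make the error $o(1)$ small. You would therefore need a gap that is uniform in $\#\mathcal{A}'$, and nothing in the paper provides one.

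Moreover, a tree that merely contains a branching point can be arbitrarily close in length to $\H(P')$; for example, add a tiny spur to the path. Minimizing within your topology may also collapse the Steiner point onto a terminal. So any such gap would have to exploit the rigid tripod at $s$, which you never do.

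The ``global comparison'' does not help either. Even granting $\H(\St)=\H(P)$, equality of lengths cannot exclude a second minimizer that does have Steiner points, and that is exactly what must be shown.

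The missing idea is the right localization. You tried a small ball around $s$ and discarded it because its boundary points are not on any useful circle. The paper instead cuts with the \emph{concentric} disc $B_{r-\varepsilon_0}(o)$, where $o$ is the centre of $C$. Since $B_r(o)\cap\mathcal{A}=\emptyset$, Corollary~\ref{cor:everyeps} shows that $\St\cap\overline{B_{r-\varepsilon_0}(o)}$ is a finite union of segments. So the closure $\St_0$ of the relevant component is a finite full tree whose terminal set $\mathcal{A}_0$ lies on the circle $\partial B_{r-\varepsilon_0}(o)$.

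The gap hypothesis transfers to $\mathcal{A}_0$, as follows.
\begin{enumerate}
\item Take a $\delta/4$-net $x_1,\dots,x_m\in\mathcal{A}$.
\item Since $\St\setminus\mathcal{A}$ is connected and of finite length, a path in $\St$ runs from each $x_i$ to a fixed interior point. This path first meets $\partial B_{\delta/8}(x_i)$ at some $y_i$.
\item For $\varepsilon_0$ small, all $y_i$ lie in one component $\St_0$. Hence every $B_{\delta/8}(x_i)$ contains a point of $\mathcal{A}_0$.
\item Consequently, all but one arc of $\partial B_{r-\varepsilon_0}(o)\setminus\mathcal{A}_0$ have diameter at most $r-\delta/4<r-\varepsilon_0$.
\end{enumerate}

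Theorem~\ref{th:cocircular} then applies directly to the finite cocircular set $\mathcal{A}_0$. Hence the full tree $\St_0$, which has Steiner points, is not a Steiner tree for $\mathcal{A}_0$. Replacing $\St_0$ by a Steiner tree for $\mathcal{A}_0$ contradicts Lemma~\ref{lm:lengthbound}. No limit passage or uniform quantitative gap is needed.
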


\begin{proof}
Assume the contrary and consider a full Steiner tree $\St$ for $\mathcal{A} \subset C$.
Choose a $\delta/4$-network $x_1, \dots, x_m \in \mathcal{A}$ for $\mathcal{A}$. Note that at most one connected component of $C\setminus \{x_1,\dots,x_m\}$ has diameter greater than 
$(r - \delta) + 2\delta/4 = r - \delta/2$. 

Fix some point $x_0 \in \St \cap B_{r-\delta}(o)$, where $o$ is the center of $C$ (if it does not exist, we may consider a smaller $\delta$). 
Since $\St \setminus \mathcal{A}$ is connected and has a finite length, it is path-connected (see for example~\cite{EiSaHa}). For every $1 \leq i \leq m$ consider the first intersection $y_i$ of the path from $x_i$ to $x_0$ with $\partial B_{\delta/8}(x_i)$, and denote by $P_i$ the path from $x_i$ to $y_i$.
By continuity, for every $\varepsilon < \dist(y_i,C)$ the circle $\partial B_{r-\varepsilon}(o)$ intersects $P_i$.
Now consider an $\varepsilon_0$ for which all $y_i$ belong to the same closure $\St_0$ of a connected component of the set $\St \cap B_{r-\varepsilon_0}(o)$.
Since $\St$ is a full tree, $\St_0$ is also a full tree. Also, $\St_0$ contains a terminal point in every $B_{\delta/8}(x_i)$, so $\mathcal{A} \subset B_{3\delta/8}(\mathcal{A}_0)$, where 
$\mathcal{A}_0$ is the terminal set of $\St_0$. Hence at most one connected component of $\partial B_{r-\varepsilon_0}(o) \setminus \mathcal{A}_0$ has diameter greater than 
$r - \delta + 2 \cdot 3 \delta / 8 = r - \delta/4 < r - \varepsilon_0$.

Thus $\St_0$ satisfies the conditions of Theorem~\ref{th:cocircular}, so it cannot be Steiner tree for $\mathcal{A}_0$. Therefore one can replace in $\St$ the part $\St_0$ with a Steiner tree for $\mathcal{A}_0$, which contradicts Lemma~\ref{lm:lengthbound}.
\end{proof}

\begin{corollary}\label{cor:cocircularlength}
   Assume that an open ball $B_r(x)$ does not intersect the terminal set $\mathcal{A} \subset \mathbb{R}^2$ of a Steiner tree $\St$. Then
   \begin{itemize}
       \item[(i)]  the length of $\St \cap B_r(x)$ is at most $2\pi r$;
       \item[(ii)] the length of the union $F$ of connected components of $\St \cap B_r(x)$ which are not segments (i.e., that have at least one branching point) is at most $4\pi r/3 + r = 5.188 \ldots \cdot r$.  
   \end{itemize}
\end{corollary}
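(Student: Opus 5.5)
For both parts I would work in a slightly smaller closed disc and then let the radius tend to $r$. Fix $r'<r$ and put $C'=\partial B_{r'}(x)$. By Corollary~\ref{cor:everyeps}, $\St\cap\overline{B_{r'}(x)}$ is a finite union of segments, so it splits into finitely many components $K_1,\dots,K_m$, each a finite embedded tree.

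First I would show that every $K_j$ meets $C'$. If some $K_j$ stayed inside the open disc $B_{r'}(x)$, it would be a connected component of $\St\cup\mathcal A$ disjoint from $\mathcal A$, because $\mathcal A\cap B_r(x)=\emptyset$. Deleting it would keep $\St\cup\mathcal A$ connected and reduce length, which is impossible. Once this is known, $\St\cup\mathcal A\cup C'\setminus X$ is connected for $X=\St\cap B_{r'}(x)$ and $Y=C'$. Lemma~\ref{lm:lengthbound} then gives $\H(\St\cap B_{r'}(x))\le 2\pi r'$, and letting $r'\to r$ proves (i).

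For (ii), let $F_{r'}$ be the union of those $K_j$ that contain a branching point. Each such $K_j$ is a finite tree whose leaves $T_j$ lie on $C'$, and it is full with respect to $T_j$, since interior points are not terminals. By Lemma~\ref{lm:lengthbound} it is a Steiner tree for $T_j$: any shorter connector could be substituted. Since $K_j$ has a Steiner point, Theorem~\ref{th:cocircular} says that at least two of the arcs of $C'\setminus T_j$ between cyclically consecutive points of $T_j$ have chord longer than $r'$, i.e. angular length greater than $\pi/3$.

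The competitor for $X=F_{r'}$ is $Y=(C'\setminus\gamma)\cup[\gamma]$. Here $\gamma$ is an open arc of angular length exactly $\pi/3$ placed inside a long gap of a suitably chosen component, and $[\gamma]$ is its chord, of length $r'$. Then
\[
\H(Y)=2\pi r'-\pi r'/3+r'\le 4\pi r'/3+r'.
\]
Applying Lemma~\ref{lm:lengthbound} and letting $r'\to r$ gives (ii).

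The main obstacle is checking that $\St\cup\mathcal A\cup Y\setminus F_{r'}$ is connected. The terminal sets $T_j$ are pairwise non-crossing on $C'$, since the trees are disjoint and planar. I would therefore take $\gamma$ in a long gap of an innermost component in the nesting order, so that no terminal of any component of $F_{r'}$ lies in $\gamma$. After this choice, everything in $F_{r'}$ reattaches through $C'\setminus\gamma$.

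The segment components not in $F_{r'}$ are the remaining danger. Any such segment with an endpoint on $\gamma$ either crosses the chord $[\gamma]$, or both its endpoints lie on $\gamma$. In the second case it lies in the small circular segment and is disconnected from the rest, which would contradict the connectedness of $\St\cup\mathcal A$ unless it exits the disc. So in the second case the segment exits the disc, and its attachment to the rest of $\St$ is preserved outside.

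If placing $\gamma$ inside a single gap fails for some configuration, the fallback is to exploit both long gaps given by Theorem~\ref{th:cocircular}. One removes a $\pi/3$-arc from each gap and uses one chord to rejoin, which gives the same length budget $4\pi r'/3+r'$.
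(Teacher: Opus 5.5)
Your part (i) is fine, and working at radius $r'<r$ with the finite Theorem~\ref{th:cocircular} instead of Corollary~\ref{cor:5pointsaremany} is a legitimate variation. Part (ii), however, has a genuine gap: the length of your main competitor is miscomputed. Deleting one open arc of angle $\pi/3$ from $C'$ and adding its chord gives
\[
\H(Y)=2\pi r'-\tfrac{\pi}{3}r'+r'=\tfrac{5\pi}{3}r'+r'\approx 6.24\,r',
\]
not $\tfrac{4\pi}{3}r'+r'$. This is barely below the trivial bound $2\pi r'$ from part (i). It is also useless for Corollary~\ref{cor:2components}, which needs a bound below $3\sqrt3\,r$.

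To get $\tfrac{4\pi}{3}r+r$ you must delete \emph{two} disjoint arcs $\gamma_1,\gamma_2$ of angle at least $\pi/3$ and add back only the chord of $\gamma_1$. The set $Y=(C'\setminus(\gamma_1\cup\gamma_2))\cup[\gamma_1]$ is connected, because that chord joins the two remaining arcs. Your fallback describes this competitor, but you treat it as an equivalent alternative giving ``the same budget'' rather than as the necessary construction, and you do not justify it. The justification is precisely the nontrivial part: \emph{both} arcs must avoid the endpoints of every branching component at once. Choosing an innermost component secures only one such arc. Its second long gap may be its outer gap and contain endpoints of other components. For example, take two branching components whose endpoints lie in two disjoint arcs of $C'$, each having only one long inner gap.

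The missing step, which is how the paper argues, is a descent carried out separately in two gaps. Fix one branching component $S$ and two long gaps $R_1,R_2$ of it. By planarity, any other branching component $S'$ with an endpoint in $R_1$ has all its endpoints in $R_1$. The connected arc $C'\setminus R_1$ then lies in a single gap of $S'$, so $S'$ has another long gap lying inside $R_1$, and you replace $R_1$ by it. There are finitely many components, so this stops and produces a long arc in $R_1$ free of endpoints of $F_{r'}$; likewise in $R_2$. Then take $\gamma_i\subset R_i$ of angle exactly $\pi/3$.

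Two smaller remarks. First, your case analysis of segment components is unnecessary. They are not removed, and connectivity of $(\St\cup\mathcal A\setminus F_{r'})\cup Y$ only needs $Y$ to be connected and to contain all points of $F_{r'}$ on $C'$. Second, choose $r'$ so that $C'$ contains no vertex of $\St$. Then each component in $F_{r'}$ genuinely has a Steiner point off $C'$, and Theorem~\ref{th:cocircular} applies.
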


\begin{proof}
    Item~(i) follows directly from Lemma~\ref{lm:lengthbound} with $X = \St \cap B_r(x)$ and $Y = \partial B_r(x)$.

    Now consider the closure of a connected component $S$ of $\St \cap B_r(x)$ that has a branching point. By Corollary~\ref{cor:5pointsaremany} the set $\partial B_r(x) \setminus S$ has at least two connected components (arcs) of diameter at least $r$; fix two of them, say $R_1 = \breve{yz}$ and $R_2$. We claim that the set $\partial B_r(x) \setminus \overline{F}$ has a connected component (arc) of diameter at least $r$ inside both $R_1$ and $R_2$. Indeed, every closure of a connected component $S'$, whose terminals belong to $R_1$ has a large connected component (arc) of $\partial B_r(x) \setminus S'$ inside the arc $R_1$; replace $R_1$ with this arc. Since every such component has diameter (and thus length) at least $r$, by item~(i) we have at most 6 such components, so this reduction process is finite. 

    Now apply Lemma~\ref{lm:lengthbound} with $X = F$ and $Y = (\partial B_r(x) \setminus R_1 \setminus R_2) \cup [yz]$.
    Clearly $\H(Y)$ is maximal when $\H([yz])$ is minimal. If $|yz| = r$, then $\H(Y) = 4\pi r/3 + r$.
\end{proof}

\section{Example of a non-regular tree} \label{sec:example}

In this section, we construct an example of a full Steiner tree with infinitely many branching points in a disk, whose terminals lie on the unit circle.

\begin{theorem} \label{th:example}
    There is a full tree $\St$ for a countable terminal set $\mathcal{A} \subset \mathbb{R}^2$ lying on a circle such that the set of branching points of $\St$ has 4 accumulation points. 
\end{theorem}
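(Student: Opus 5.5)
We construct the example by an inductive gluing procedure, keeping all terminals on the unit circle $C$ and the tree full. The intended picture is a four-fold symmetric configuration. The terminal set accumulates at four points $z_1,\dots,z_4 \in C$ (say $\pm 1, \pm i$). Near each $z_j$ the tree looks like an infinite ``ladder'' of small full Steiner trees, whose branching points converge to $z_j$. Between the $z_j$ the tree contains only finitely many branching points.

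\textbf{Step 1: the inductive construction.} Start with a finite cocircular set $\mathcal{A}_0 \subset C$ whose Steiner tree $\St_0$ is unique, full, and has a fixed topology $T_0$. For instance, take four clusters of points so that $\St_0$ has an ``H''-type shape, with one edge ending at a terminal near each $z_j$. At step $k$, replace a terminal $a$ lying near $z_j$ by a small cluster of terminals on $C$ near $a$. This adds a new small full subtree, a ``Melzak-type'' splitting of $a$ into two or three terminals. By Lemma~\ref{lm:Melzak} and the Maxwell-type formula (Lemma~\ref{lm:Maxwell} and Corollary~\ref{cor:maxwellCocircular}), we choose the new terminals so that the direction of the old edge into $a$ is preserved to first order. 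By the discussion after Corollary~\ref{cor:maxwellCocircular}, the new terminals should satisfy $\sum \bar c_j (u_j - v_j) \in \mathbb{R}$ (or $=0$). The rest of the tree is then unchanged up to a small perturbation.

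\textbf{Step 2: stability at each step.} At each step we use Lemma~\ref{lm:stab}. The topology $T_k$ is the unique optimal one with a positive gap $\varepsilon_k$ to all other topologies. The next modification has size at most $\varepsilon_k/(5n_k)$ in the appropriate sense, so the Steiner tree for $\mathcal{A}_{k+1}$ keeps the intended topology $T_{k+1}$, which refines $T_k$. The modifications are chosen summable and shrinking toward $z_j$, so that:
\begin{itemize}
\item the trees $\St_k$ converge in Hausdorff distance to a set $\St$ of finite length;
\item the terminal sets $\mathcal{A}_k$ converge to a countable compact $\mathcal{A} = \overline{\bigcup \mathcal{A}_k}$ with accumulation points exactly $z_1,\dots,z_4$.
\end{itemize}
Corollary~\ref{cor:5pointsaremany} constrains the construction: gaps of length $\ge r$ must remain, so the clusters must stay confined to four short arcs, which leaves room for four accumulation points.

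\textbf{Step 3: minimality of the limit.} We pass to the limit using the lower semicontinuity of $\H$ on connected sets (Go\l{}\k{a}b's theorem) together with the fact that Steiner lengths depend continuously on the terminal set in Hausdorff distance. Together these show that $\St$ is a Steiner tree for $\mathcal{A}$. Its combinatorial structure away from the $z_j$ stabilizes, since Corollary~\ref{cor:everyeps} applies on compact pieces away from $\mathcal{A}$. Hence every branching point created at step $k$ survives in $\St$, and the branching points accumulate at each $z_j$. Fullness is inherited because every $\St_k$ is full.

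\textbf{Main obstacle.} The hardest part is Step 2. The gaps $\varepsilon_k$ must not shrink faster than the perturbations we need, because each new cluster must still be large enough to force a genuine branching point rather than a degenerate configuration. I would first try to make the construction self-similar near each $z_j$, so that each step is a scaled copy of the previous one. The gap ratio would then be a fixed constant verified once, numerically or by Corollary~\ref{cor:maxwellCocircular}, and a geometric scaling factor would make all the perturbation sums converge.
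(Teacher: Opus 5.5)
Your proposal lacks the mechanism that allows infinitely many branching points near a point of the circle, and the heuristic you offer for the hardest step cannot work. In a full tree all edges are parallel to three fixed lines. Corollary~\ref{cor:maxwellCocircular}, applied to the truncations $\St\cap\overline{B_{1-\varepsilon}(o)}$, expresses their length as a sum of strictly positive terms $(1-\varepsilon)\cos\alpha_k$, one for each crossing point. Hence, exactly as in the proof of Theorem~\ref{th:4pointsmax}, terminals can accumulate only at points where one of the three edge directions is tangent to the circle, and the legs reaching the new terminals must become asymptotically tangent there. Two things in your plan follow from this. First, your tentative choice $\pm 1,\pm i$ is impossible, since the admissible points are six points spaced by $\pi/3$. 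Second, a self-similar cascade of scaled copies cannot exist: scaled copies keep the angles at their terminals fixed, so every level would add a fixed positive amount to this sum and the length would be infinite. The true local picture is parabolic rather than self-similar. In the paper the new branching point $t(x)_j$ lies within $\varepsilon_j$ of $\omega$, while the new terminal $z(x)_j$, reached by a nearly tangent leg, lies at distance of order $\sqrt{\varepsilon_j}$ (at most $2\sqrt{2\varepsilon_j}$) from $x$. A fixed gap ratio is also incompatible with Lemma~\ref{lm:stab}, whose admissible shift $\varepsilon/(5n)$ deteriorates as the number $n$ of terminals grows; the paper needs $\varepsilon_j<\delta_{j-1}^2/(128j)$.

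The inductive step and the passage to the limit also need ideas that are absent. The paper starts from the rectangle $\mathcal{A}_0=\{\pm e^{\pi i/6},\pm e^{5\pi i/6}\}$, whose H-shaped Steiner tree has one of its three directions tangent to $\omega$ at every terminal, and performs each step in two stages. First, the four points of $\mathcal{A}_0$ are shifted symmetrically along $\omega$. The topology is unchanged (Lemma~\ref{lm:stab}), so the discussion after Corollary~\ref{cor:maxwellCocircular} does apply, the difference $\Delta_j$ is real, and the new tree is \emph{exactly} parallel to the old one. You invoke that discussion for a step that splits a terminal, where it says nothing, and you only obtain directions ``to first order''. Second, a regular tripod is spliced into the edge at each shifted point, with ends the original point $x$, the old neighbour $y(x)_j$ and a new terminal $z(x)_j\in\omega$. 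Minimality of $\St_j$ follows by deleting the new terminals from a competitor's topology and using the gap $\delta_{j-1}$.

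Consequently every edge stays parallel to three fixed lines and segments only shorten and stabilize. This is what gives survival of the branching points and fullness of the limit, neither of which is automatic for Hausdorff limits. The terminal sets are also nested, $\mathcal{A}_j=\mathcal{A}_{j-1}\cup\{z(x)_j : x\in\mathcal{A}_0\}$, and nestedness is what proves minimality of the limit: every competitor for $\mathcal{A}_\infty$ connects $\mathcal{A}_j$, so its length is at least $\H(\St_j)$. Your Step 3 instead invokes continuity of the Steiner length under Hausdorff convergence of terminal sets, which fails precisely in the direction you need. By Theorem~\ref{th:cocircular}, $k\ge 6$ equally spaced points on a unit circle have Steiner length $2(k-1)\sin(\pi/k)\to 2\pi$, while the circle itself needs length $0$. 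Since your scheme replaces terminals instead of adding them, this step would require a separate quantitative comparison.
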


\begin{remark}
    Actually, these accumulation points are exactly the accumulation points of $\mathcal{A}$. Theorem~\ref{th:4pointsmax} shows that the quantity 4 cannot be increased.
\end{remark}

\begin{proof}[Proof of Theorem~\ref{th:example}]
Here we again identify points in the plane with complex numbers.
Consider the unit circle $\omega$. 
Let $\St_0$ be the Steiner tree for the set $\mathcal{A}_0 = \{\pm e^{\pi i/6}, \pm e^{5\pi i/6}\}$, whose points form rectangle $P$, see the leftmost part of Fig.~\ref{pic:2step0}.
It is easy to see that $\St_0$ makes angles of $\pi/3$ with $\omega$ at the vertices.
We construct a sequence of full trees $\St_j$ for terminals $\mathcal{A}_j$, whose edges are parallel to the same three straight lines.
Moreover, every tree $\St_j$ is unique Steiner tree for $\mathcal{A}_j$, and any other locally minimal tree for $\mathcal{A}_j$ is longer than $\St_j$ by at least $\delta_j > 0$, which will be defined later.

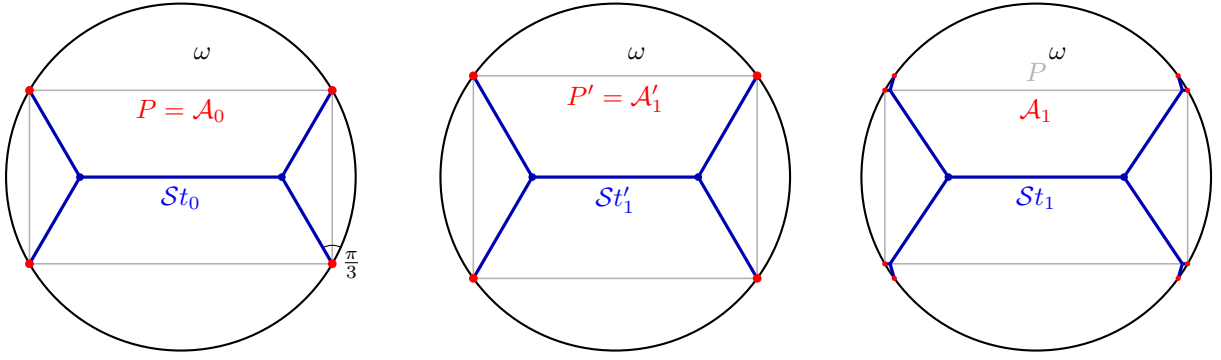
\begin{figure}[h]
    \centering
    \begin{tikzpicture}[scale=2.3]

  \pgfmathsetmacro{\rtthree}{1.7320508075688772} 
  \pgfmathsetmacro{\xC}{\rtthree/2}  
  \pgfmathsetmacro{\yC}{0.5}         
  \pgfmathsetmacro{\xs}{1/sqrt(3)}   

  \draw[thick] (0,0) circle (1);
  \node[below right] at (0.02,0.8) {$\omega$};

  \coordinate (UR) at  (\xC,\yC);     
  \coordinate (UL) at  (-\xC,\yC);    
  \coordinate (LL) at  (-\xC,-\yC);   
  \coordinate (LR) at  (\xC,-\yC);    

  \draw[gray!60, line width=0.6pt] (UL)--(UR)--(LR)--(LL)--cycle;
  \node[red, below] at (0,\yC) {$P=\mathcal{A}_0$};

  \coordinate (SL) at (-\xs,0);
  \coordinate (SR) at (\xs,0);

  \draw[blue!70!black, very thick] (SL)--(SR);         
  \draw[blue!70!black, very thick] (SL)--(UL);
  \draw[blue!70!black, very thick] (SL)--(LL);
  \draw[blue!70!black, very thick] (SR)--(UR);
  \draw[blue!70!black, very thick] (SR)--(LR);
  \node[blue!70!black, below] at (0,0) {$\St_0$};

  \foreach \P in {UL,UR,LL,LR} \fill[red] (\P) circle (0.025);
  \foreach \P in {SL,SR} \fill[blue!70!black] (\P) circle (0.022);


  \pgfmathsetmacro{\eps}{0.25}



  \coordinate (LRt) at ($(LR)+\eps*(\yC,\xC)$);  
  \pic [draw, below right, "$\frac{\pi}{3}$", angle eccentricity=1.2, angle radius=7pt]
      {angle = LRt--LR--SR};


\end{tikzpicture}
\hfill
\begin{tikzpicture}[scale=2.3]

  \pgfmathsetmacro{\eps}{0.1}                 
  \pgfmathsetmacro{\thet}{pi/6 + \eps}        

  \draw[thick] (0,0) circle (1);
  \node[below right] at (0.02,0.8) {$\omega$};

  \pgfmathsetmacro{\cx}{cos(\thet r)}
  \pgfmathsetmacro{\sy}{sin(\thet r)}
  \coordinate (UR) at (\cx,\sy);
  \coordinate (UL) at (-\cx,\sy);
  \coordinate (LL) at (-\cx,-\sy);
  \coordinate (LR) at (\cx,-\sy);

  \draw[gray!60, line width=0.6pt] (UL)--(UR)--(LR)--(LL)--cycle;
  \node[red, below] at (0,\sy) {$P'=\mathcal{A}_1'$};

  \pgfmathsetmacro{\s}{\cx - \sy/sqrt(3)}   
  \coordinate (SL) at (-\s,0);
  \coordinate (SR) at (\s,0);

  \draw[blue!70!black, very thick] (SL)--(SR);   
  \draw[blue!70!black, very thick] (SL)--(UL);
  \draw[blue!70!black, very thick] (SL)--(LL);
  \draw[blue!70!black, very thick] (SR)--(UR);
  \draw[blue!70!black, very thick] (SR)--(LR);
  \node[blue!70!black, below] at (0,0) {$\St_1'$};

  \foreach \P in {UL,UR,LL,LR} \fill[red] (\P) circle (0.025);
  \foreach \P in {SL,SR} \fill[blue!70!black] (\P) circle (0.022);

\end{tikzpicture}
\hfill
\begin{tikzpicture}[scale=2.3]

  \pgfmathsetmacro{\rtthree}{1.7320508075688772} 
  \pgfmathsetmacro{\xC}{\rtthree/2}  
  \pgfmathsetmacro{\yC}{0.5}         
  \pgfmathsetmacro{\xs}{1/sqrt(3)-0.075}   

  \draw[thick] (0,0) circle (1);
  \node[below right] at (0.02,0.8) {$\omega$};

  \coordinate (UR) at  (\xC,\yC);     
  \coordinate (UL) at  (-\xC,\yC);    
  \coordinate (LL) at  (-\xC,-\yC);   
  \coordinate (LR) at  (\xC,-\yC);    

  \pgfmathsetmacro{\eps}{0.1}                 
  \pgfmathsetmacro{\deltax}{0.03}             
  \pgfmathsetmacro{\deltay}{0.00}             
  \pgfmathsetmacro{\thet}{pi/6 + \eps}        

  \pgfmathsetmacro{\cx}{cos(\thet r)}
  \pgfmathsetmacro{\sy}{sin(\thet r)}
  \coordinate (URr) at (\cx,\sy);
  \coordinate (ULl) at (-\cx,\sy);
  \coordinate (LLl) at (-\cx,-\sy);
  \coordinate (LRr) at (\cx,-\sy);

  \draw[gray!60, line width=0.6pt] (UL)--(UR)--(LR)--(LL)--cycle;
  \node[red, below] at (0,\yC) {$\mathcal{A}_1$};
  \node[gray!60, above] at (0,\yC) {$P$};
  
  \coordinate (SL) at (-\xs,0);
  \coordinate (SR) at (\xs,0);


  \coordinate (SLu) at (-\xC+\deltax,\yC-\deltay);
  \coordinate (SLd) at (-\xC+\deltax,-\yC+\deltay);
  \coordinate (SRu) at (\xC-\deltax,\yC-\deltay);
  \coordinate (SRd) at (\xC-\deltax,-\yC+\deltay);

  \draw[blue!70!black, very thick] (SL)--(SR);         
  \draw[blue!70!black, very thick] (SL)--(SLu);
  \draw[blue!70!black, very thick] (SL)--(SLd);
  \draw[blue!70!black, very thick] (SR)--(SRu);
  \draw[blue!70!black, very thick] (SR)--(SRd);
  \draw[blue!70!black, very thick] (SLu)--(ULl);
  \draw[blue!70!black, very thick] (SLu)--(UL);
  \draw[blue!70!black, very thick] (SLd)--(LLl);
  \draw[blue!70!black, very thick] (SLd)--(LL);
  \draw[blue!70!black, very thick] (SRu)--(URr);
  \draw[blue!70!black, very thick] (SRu)--(UR);
  \draw[blue!70!black, very thick] (SRd)--(LRr);
  \draw[blue!70!black, very thick] (SRd)--(LR);
  \node[blue!70!black, below] at (0,0) {$\St_1$};

  \foreach \P in {UL,UR,LL,LR,ULl,URr,LLl,LRr} \fill[red] (\P) circle (0.015);
  \foreach \P in {SL,SR} \fill[blue!70!black] (\P) circle (0.022);

\end{tikzpicture}
    \caption{Transition from $\St_0$ to $\St_1$}
    \label{pic:2step0} 
\end{figure}

At the $j$-th iteration, we shift the terminals from $\mathcal{A}_0$ in $\mathcal{A}_{j-1}$ (by construction $\mathcal{A}_0 \subset \mathcal{A}_{j-1}$ for every $j$) along the circle $\omega$ by an amount $\varepsilon_j < \delta_{j-1}^2/128j$ inside the angle of size $2\pi/3$, obtaining a new rectangle with the vertex set $P_j$. 
Formally, the direction can be defined by the decreasing of $(-1)^j |p_y|$, where $p_y$ is the $y$-coordinate of the point $p \in \mathcal{A}_0$, so the directions of the shifts differ between even and odd iterations.  
Let $\St'_j$ be the Steiner tree for $\mathcal{A}'_j := \mathcal{A}_{j-1} \cup P_j \setminus \mathcal{A}_0$.
By Lemma~\ref{lm:stab}, the tree $\St'_j$ is a unique Steiner set for the terminal set $\mathcal{A}'_j$ and has the same topology as $\St_{j-1}$.

Analogously to the reasoning from the end of Subsection~\ref{subsec:maxwell}, we will show that the tree $\St'_j$ is parallel to $\St_{j-1}$.
Indeed, let $\{c_i\}$ be the set of coefficients in Maxwell-type formula~\eqref{eq:maxwell} for $\St_{j-1}$. 
Write the difference of~\eqref{eq:maxwell} for $\St_{j-1}$ and the same expression with the same coefficients for the terminal set $\mathcal{A}'_j$
\[
\Delta_j := \sum_{u \in \mathcal{A}_{j-1}} \bar{c_u} u - \sum_{u \in \mathcal{A}'_j} \bar{c_u} u =
\sum_{u \in \mathcal{A}_0} \bar{c_u} u - \sum_{u \in P_j} \bar{c_u} u.
\]
In the case of odd $j$ the coefficients $\bar{c_u}$ for $e^{\pi i/6}$, $e^{5\pi i/6}$, $e^{7\pi i/6}$ and $e^{11\pi i/6}$ are $e^{-\pi i/3}$, $e^{-2\pi i/3}$, $e^{2\pi i/3}$ and $e^{\pi i/3}$, respectively. 
In this case 
\[
\Delta_j = -2e^{-i\pi/3} v_j - 2e^{i\pi/3} \bar{v_j},
\]
where $v_j$ is the displacement vector of the vertex $e^{\pi i/6} \in \mathcal{A}_0$.    
Clearly $\Delta_j$ is real, so the trees $\St'_j$ and $\St_{j-1}$ are segment-wise parallel.
The case of even $j$ is completely analogous with 
\[
\Delta_j = -2v_j - 2\bar{v_j}.
\]

Now, for each $x \in \mathcal{A}_0$ denote by $x_j$ the corresponding shifted terminal in $\St'_j$ (so for $x = e^{\pi i/6}$ we have $x_j = x + v_j$). To finalize the $j$-th step we replace for every $x \in \mathcal{A}_0$ the edge $[x_jy(x)_j]$ in $\St'_j$ with a regular tripod with ends $x, y(x)_j, z(x)_j$ and branching point $t(x)_j$, where the point $z(x)_j \in \omega$ is chosen so that the tripod intersects $[x_jy(x)_j]$ along a segment. 
Let $\St_j$ be the resulting tree on terminals $\mathcal{A}_j$ with topology $T_j$; so $\mathcal{A}_j$ is the union of $\mathcal{A}_{j-1}$ and four points $z(x)_j$.

\begin{figure}[h]
    \centering
    \hfill
\begin{tikzpicture}[scale=2.6]
  \usetikzlibrary{calc,angles,quotes}
  \draw[thick] (-0.6,0) -- (1.2,0);
  \node[above] at (-0.5,0.02) {$\omega$};

  \coordinate (X) at (0.35,0);
  
  \coordinate (D)  at ($(X)+0.75*(-0.5,-0.8660254)$); 
  \draw[blue!70!black,ultra thick] (X) -- node[above left,near end] {$\St_{j-1}$} (D);

  \fill[red] (X) circle (0.9pt);
  \node[above] at (X) {$x\in \mathcal{A}_0$};

  \coordinate (Xleft) at ($(X)+(-0.6,0)$);
  \pic[draw,->,"$\frac{\pi}{3}$", below left, angle radius=10.5pt,angle eccentricity=1.15]
     {angle = Xleft--X--D};
\end{tikzpicture}
\hfill
\begin{tikzpicture}[scale=2.6]
  \usetikzlibrary{calc}

  \draw[thick] (-0.6,0) -- (1.2,0);
  \node[above] at (-0.5,0.02) {$\omega$};
  \coordinate (X) at (0.35,0);
  \node[above left] at (X) {$x\in \mathcal{A}_0$};

  \draw[ultra thick, blue!70!black]
        (X) --++ (-60:0.25) coordinate (J) --++ (0:0.6) coordinate (H);
  \draw[thick, blue!70!black, dotted]
        (J) --++ (60:0.25) coordinate (F);
  \draw[ultra thick, blue!70!black]
        (J) --++ (-120:0.6) coordinate (T);

  \draw[blue!70!black] (0,-0.25) node[below] {$\St_{j}$};
  
  \fill[red] (X) circle (0.9pt);
  \fill[red] (F) circle (0.9pt);
  \fill[blue!70!black] (J) circle (0.9pt);
  \node[above right] at (F) {$x_j' \in P_j$};
  \node[below right] at (J) {$t(x)_j$};
 
  \draw[thick, blue!70!black, dotted]
        (J) --++ (0:1.3) coordinate (Z);
 
  \draw[thick,dotted] (1.2,0) to[out=0, in=150] (Z);
  \node[above right] at (Z) {$z(x)_j$};
  \fill[red] (Z) circle (0.9pt);
\end{tikzpicture}
\hfill~
    \caption{Transition from $\St_{j-1}$ to $\St_j$ in the neighborhood of vertices 
    $x = e^{\pi i/6}, e^{7\pi i/6}$ in the case of even $j$ and $x = e^{5\pi i/6}, e^{11\pi i/6}$ in the case of odd $j$}
    \label{pic:3stepI}
\end{figure}

The combinatorial structure of the tree $\St_4$ is depicted in Fig.~\ref{pic:4topology}. However, the figure is not drawn to scale; as is evident from the rightmost part of Figure~\ref{pic:2step0}, even the transition from $\St_1$ to $\St_2$ cannot be depicted accurately.

The distance from $t(x)_j$ to $\omega$ is at most $\varepsilon_j$, and so
\[
|xz(x)_j| < 2|t(x)_jz(x)_j| < 2\sqrt{2\varepsilon_j}.
\]
Suppose that a tree $\St$ with topology $R$ not in $D(T_j)$ is a Steiner tree for $\mathcal{A}_j$. Then
\[
\H (\St) \leq \H(\St_j) < \H(\St_{j-1}) + 8\sqrt{2\varepsilon_j} < \H(\St_{j-1}) + \delta_{j-1}.
\]

Now, let us forget that vertices $P_j = \mathcal{A}_j \setminus \mathcal{A}_{j-1}$ are terminals of $\St$: if a vertex $p\in P_j$ has degree 1, then we delete $p$ from the topology $R$ (viewed as an abstract graph) and in the case that $p$ has degree two we contract the adjacent edges $\{p,v_1\}$, $\{p,v_2\}$ into $\{v_1,v_2\}$ and delete the vertex $p$. Let $\St'$ be the unique (by Proposition~\ref{pr:convex}) local minimizer for the reduced topology $R'$.
Since $\H(\St') \leq \H(\St) < \H(\St_{j-1}) + \delta_{j-1}$, the definition of $\delta_{j-1}$ implies $R' \in D(T_{j-1})$.
Thus the only possibility for $R \notin D(T_j)$ is that there exist point $p \in P_j$ that is adjacent to a point $q$ that is neither the corresponding point to $p_0 \in \mathcal{A}_0$ nor a common branching point with $p_0$. But in this case $|pq| > |pp_0|$, which is impossible in a Steiner tree. This is a contradiction.

By construction, the tree $\St_{j}$ is full, so by Proposition~\ref{pr:convex} the tree $\St_j$ is the unique solution to the Steiner problem for $\mathcal{A}_j$.
Thus, we can define $\delta_j > 0$\footnote{Calculations show that one can take $\delta_j =  const \cdot \varepsilon_j$, but we do not need the explicit form of the parameters.}.

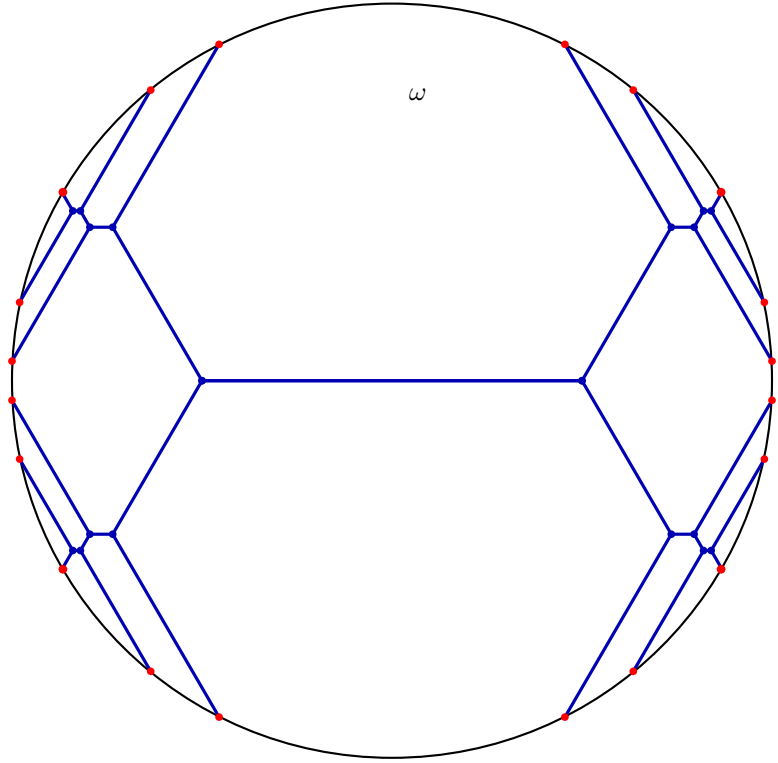
\begin{figure}[h]
    \centering
    \begin{tikzpicture}[scale=5]

  \pgfmathsetmacro{\rtthree}{1.7320508075688772} 
  \pgfmathsetmacro{\xC}{\rtthree/2}  
  \pgfmathsetmacro{\yC}{0.5}         
  \pgfmathsetmacro{\xs}{1/sqrt(3)}   

  \draw[thick] (0,0) circle (1);
  \node[below right] at (0.02,0.8) {$\omega$};

  \coordinate (UR) at  (\xC,\yC);     
  \coordinate (UL) at  (-\xC,\yC);    
  \coordinate (LL) at  (-\xC,-\yC);   
  \coordinate (LR) at  (\xC,-\yC);    

  \draw[very thick, blue!70!black] (0,0) coordinate(x0) --++ (0:0.5) coordinate (x1) --++ (60:0.47) coordinate (x2) --++ (120:0.56) coordinate (l1);
  \draw[very thick, blue!70!black] (x2) --++ (0:0.06) coordinate (x3) --++ (-60:0.41) coordinate (l2);
  \draw[very thick, blue!70!black] (x3) --++ (60:0.05) coordinate (x4) --++ (120:0.37) coordinate (l3);
  \draw[very thick, blue!70!black] (x4) --++ (0:0.02) coordinate (x5) --++ (-60:0.28) coordinate (l4);
  \draw[very thick, blue!70!black] (x5) --++ (60:0.065);
  
  \foreach \P in {l1,l2,l3,l4} \fill[red] (\P) circle (0.01012);
  \foreach \P in {x1,x2,x3,x4,x5} \fill[blue!70!black] (\P) circle (0.01012);

\begin{scope}[yscale=-1]
\draw[very thick, blue!70!black] (0,0) coordinate(x0) --++ (0:0.5) coordinate (x1) --++ (60:0.47) coordinate (x2) --++ (120:0.56) coordinate (l1);
  \draw[very thick, blue!70!black] (x2) --++ (0:0.06) coordinate (x3) --++ (-60:0.41) coordinate (l2);
  \draw[very thick, blue!70!black] (x3) --++ (60:0.05) coordinate (x4) --++ (120:0.37) coordinate (l3);
  \draw[very thick, blue!70!black] (x4) --++ (0:0.02) coordinate (x5) --++ (-60:0.28) coordinate (l4);
  \draw[very thick, blue!70!black] (x5) --++ (60:0.065);
  
  \foreach \P in {l1,l2,l3,l4} \fill[red] (\P) circle (0.01012);
  \foreach \P in {x1,x2,x3,x4,x5} \fill[blue!70!black] (\P) circle (0.01012);
\end{scope}

\begin{scope}[xscale=-1]
\draw[very thick, blue!70!black] (0,0) coordinate(x0) --++ (0:0.5) coordinate (x1) --++ (60:0.47) coordinate (x2) --++ (120:0.56) coordinate (l1);
  \draw[very thick, blue!70!black] (x2) --++ (0:0.06) coordinate (x3) --++ (-60:0.41) coordinate (l2);
  \draw[very thick, blue!70!black] (x3) --++ (60:0.05) coordinate (x4) --++ (120:0.37) coordinate (l3);
  \draw[very thick, blue!70!black] (x4) --++ (0:0.02) coordinate (x5) --++ (-60:0.28) coordinate (l4);
  \draw[very thick, blue!70!black] (x5) --++ (60:0.065);
  
  \foreach \P in {l1,l2,l3,l4} \fill[red] (\P) circle (0.01012);
  \foreach \P in {x1,x2,x3,x4,x5} \fill[blue!70!black] (\P) circle (0.01012);
\end{scope}

\begin{scope}[xscale=-1, yscale=-1]
\draw[very thick, blue!70!black] (0,0) coordinate(x0) --++ (0:0.5) coordinate (x1) --++ (60:0.47) coordinate (x2) --++ (120:0.56) coordinate (l1);
  \draw[very thick, blue!70!black] (x2) --++ (0:0.06) coordinate (x3) --++ (-60:0.41) coordinate (l2);
  \draw[very thick, blue!70!black] (x3) --++ (60:0.05) coordinate (x4) --++ (120:0.37) coordinate (l3);
  \draw[very thick, blue!70!black] (x4) --++ (0:0.02) coordinate (x5) --++ (-60:0.28) coordinate (l4);
  \draw[very thick, blue!70!black] (x5) --++ (60:0.065);
  
  \foreach \P in {l1,l2,l3,l4} \fill[red] (\P) circle (0.01012);
  \foreach \P in {x1,x2,x3,x4,x5} \fill[blue!70!black] (\P) circle (0.01012);
\end{scope}

  \foreach \P in {UL,UR,LL,LR} \fill[red] (\P) circle (0.0115);

\end{tikzpicture}
    \caption{A locally minimal tree with the same topology and symmetries as $\St_4$}
    \label{pic:4topology} 
\end{figure}

Finally, we can pass to the limit by setting $\mathcal{A}_\infty = \cup_{j=0}^\infty \mathcal{A}_j$. 
Note that a segment that appeared in $\St_{j}$ can only change (shorten) in $\St_{j+1}$, so we can define the limit tree $\St_\infty$, consisting of the segments that stabilize. By construction, $\St_\infty$ is a full tree of finite length connecting $\mathcal{A}_\infty$. Since $\mathcal{A}_j$ is contained in $\mathcal{A}_\infty$ we have
\[
\H (\St) \geq \H (\St_j)
\]
for any Steiner tree $\St$ with the terminal set $\mathcal{A}_\infty$. Passing to the limit, we get $\H (\St) \geq \H(\St_\infty)$, that is, $\St_\infty$ is a (possibly not unique) Steiner tree for $\mathcal{A}_\infty$.
\end{proof}

\section{Regularity} \label{sec:regularity}

\subsection{Lower bound}

We claim that for the vertices of the hypercube $H = \{-1,1\}^d \subset \mathbb{R}^d$, a Steiner tree is necessarily full.
By Lemma~\ref{lm:conv}, a Steiner tree $\St$ for $H$ lies in the convex hull of $H$, and for any points $x \in H$, $y,z \in \conv H$ the angle $\angle yxz$ is non-obtuse. 
Hence $x$ cannot have degree 2 or 3 in $\St$.
    
The length of a minimum spanning tree for $H$ is $2 (2^d-1)$, so by Proposition~\ref{pr:ratio}
\[
\H(\St) \geq \frac{2}{\sqrt{3}} \left (2^d-1 \right),
\]
where $\St$ is any Steiner tree for $H$. The hypercube $H$ is inscribed in a sphere of radius $\sqrt{d}$, which gives the estimate
\[
\H(\St_r) = \H(\St_r \cap B_1(o)) \geq \frac{2(2^d-1)}{\sqrt{3d}},
\]
where $\St_r$ is a Steiner tree for a rescaled hypercube, inscribed in $B_1(o)$ (so the scale factor is $1/\sqrt{d}$).

Now let us show that a main part of this example belongs to a smaller ball $B_\rho(o)$, where $\rho$ is close to~1.

\begin{lemma} \label{lm:lowerbound}
   Let $\St$ be a Steiner tree for $A = \left \{\frac{-1}{\sqrt{d}}, \frac{1}{\sqrt{d}} \right \}^d$. 
   Then 
   \[
   H^1(\St \cap B_{\rho}(o)) \geq \frac{2(2^d-1)}{\sqrt{3d}} \cdot \left (1 - 2\sqrt{3d(1-\rho^2)} \right),
   \]
   provided that $1 \geq \rho \geq 1 - \frac{1}{10d}$.
\end{lemma}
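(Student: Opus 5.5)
The plan is to bound from above the length of $\St$ lying in the shell $\overline{B_1(o)}\setminus B_\rho(o)$, and subtract this from the total length. The total length is already controlled by the discussion before the lemma: $\H(\St)\ge \frac{2(2^d-1)}{\sqrt{3d}}$.

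The shell part will be bounded by Lemma~\ref{lm:lengthbound} together with the convex hull property (Lemma~\ref{lm:conv}). By Lemma~\ref{lm:conv}, $\St\subset \conv A$, which is the cube $[-1/\sqrt d,1/\sqrt d]^d$. The portion of this cube outside $B_\rho(o)$ consists of $2^d$ disjoint small corner regions, one near each vertex $a\in A$. Each corner region lies within distance $h$ of $a$, where $h$ is computed as follows.

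Take a point $y$ of the cube with $|y|\ge\rho$. Writing $y=a-w$ with $w$ pointing into the cube, we have $\langle a,w\rangle\ge |w|_1/\sqrt d\cdot$(sign-adjusted). More concretely, $|y|^2=1-2\langle a,w\rangle+|w|^2$, and every coordinate of $w$ has the sign of the corresponding coordinate of $a$. This gives $\langle a,w\rangle=|w|_1/\sqrt d\ge |w|/\sqrt d$. So $\rho^2\le 1-2|w|/\sqrt d+|w|^2$, and hence $|w|\lesssim \tfrac{\sqrt d}{2}(1-\rho^2)(1+O(\cdot))$, using $\rho\ge 1-\frac1{10d}$ to control the quadratic term. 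Call this bound $h$. The corner regions are then disjoint (they have diameter much less than $2/\sqrt d$) and each is contained in $B_h(a)$.

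Next comes the replacement argument. Let $X=\St\setminus B_\rho(o)$. Let $Y$ be the union, over $a\in A$, of the segments from $a$ to the points of $\St\cap\partial B_\rho(o)$ that lie in the corner of $a$. Then $\St\cup A\cup Y\setminus X$ is connected: every component of $X$ sits in a single corner and touches $\partial B_\rho$ or $a$, and its boundary points get reconnected through $a$. This gives $\H(X)\le h\cdot\#(\St\cap\partial B_\rho(o))$.

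The main obstacle is controlling the number $N$ of crossing points. I would instead use the coarea inequality (Lemma~\ref{lm:coarea}) with a center at each vertex $a$: for a.e. $r\in(h,2h)$, by choosing a good sphere $\partial B_r(a)$ we get a crossing count at most $\H(\St\cap B_{2h}(a))/h$. The cut-and-reconnect step is then applied to $\St\cap B_r(a)$, replacing it by segments to $a$ of length $r\le 2h$. This yields a self-improving inequality $\ell_a\le 2\ell_a'$, which is not useful directly.

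A cleaner route is to compare globally. Replace $\St$ near each corner by a shortest tree, so that the Steiner tree has full topology with each terminal of degree one (as argued before the lemma). Also, $\St$ in the corner of $a$ is a single segment once the corner is small: by Corollary~\ref{cor:everyeps}-type finiteness, the degree-one pendant edge at $a$ should have length at least of order $1/\sqrt d$, because the angle condition and the non-obtuse corner force the pendant edge far inward. Granting that each corner contains only the pendant segment, its length there is at most $h$, so the total shell length is at most $2^d h$.

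Finally, $2^d h$ should be compared with $\frac{2(2^d-1)}{\sqrt{3d}}\cdot 2\sqrt{3d(1-\rho^2)}$, which is of order $2^{d+2}\sqrt{1-\rho^2}$. For $\rho$ close to $1$ we have $h\approx \tfrac{\sqrt d}2(1-\rho^2)\le 4\sqrt{1-\rho^2}$ whenever $1-\rho^2\le \frac{1}{5d}$, which is exactly the regime $\rho\ge1-\frac1{10d}$. The delicate step is justifying that $\St$ meets each corner region only along the single pendant edge. If that fails, I would fall back on the coarea/cutting estimate above, combined with the square-root slack that the stated bound allows.
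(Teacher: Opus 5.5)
There is a genuine gap: the proposal never bounds the number of crossing points $\#(\St\cap\partial B_\rho(o))$, and the final estimate needs exactly this.

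Your reduction is the right one and matches the paper. Take $X=\St\setminus B_\rho(o)$ and let $Y$ be the segments joining each point of $\St\cap\partial B_\rho(o)$ to the vertex of its corner. Then Lemma~\ref{lm:lengthbound} gives $\H(X)\le h\cdot\#(\St\cap\partial B_\rho(o))$. Your distance bound is also fine. In fact $h=\frac{1}{\sqrt d}\bigl(1-\sqrt{1-d(1-\rho^2)}\bigr)\le\sqrt{1-\rho^2}$ holds exactly, which is the bound the paper gets from $(a-b)^2\le a^2-b^2$.

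The claim that each corner meets $\St$ only along the pendant edge at its vertex is not proved. By convexity of the norm, an edge can meet a corner only through an endpoint lying in that corner. So your claim amounts to showing that no Steiner point lies in any corner. The heuristic that the pendant edge at $a$ is long does not address this. Your fallback does not close the gap either: as you note yourself, the coarea and re-cutting estimate around $a$ is circular.

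The missing idea is elementary and uses only facts you already have. $A$ is finite ($2^d$ points) and $\St$ is full, as argued before the lemma. Hence $\St$ is a finite tree made of exactly $2\cdot 2^d-3$ straight segments. Each segment meets the sphere $\partial B_\rho(o)$ in at most two points, so $\#(\St\cap\partial B_\rho(o))\le 4(2^d-1.5)<4(2^d-1)$. Combined with $h\le\sqrt{1-\rho^2}$, this gives
$\H(X)\le 4(2^d-1)\sqrt{1-\rho^2}=\frac{2(2^d-1)}{\sqrt{3d}}\cdot 2\sqrt{3d(1-\rho^2)}$.
Subtracting this from $\H(\St)\ge\frac{2(2^d-1)}{\sqrt{3d}}$ yields the lemma, and this is exactly the paper's proof. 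No information about which edges or Steiner points enter a corner is needed.
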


\begin{proof}
    Let $S$ be a connected component of the set $\left [\frac{-1}{\sqrt{d}}, \frac{1}{\sqrt{d}} \right]^d \setminus B_{\rho}(o)$.
    Clearly, $S$ consists of points with the same signs of coordinates (so there are $2^d$ such components); 
    indeed, any point with at least one zero coordinate inside the hypercube $[\frac{-1}{\sqrt{d}}, \frac{1}{\sqrt{d}}]^d$ has distance at most
    \[
    \sqrt{0 + 1/d + \dots + 1/d} = \sqrt{\frac{d-1}{d}} < 1 - \frac{1}{10d}
    \]
    from $o$ and thus belongs to $B_\rho(o)$.

    Without loss of generality, let $S$ contains point $v$ with all coordinates equal to $1/\sqrt{d}$. Then the distance from $v$ to any point from the set 
    $\partial S \cap \partial B_\rho (o)$ is at most
    \[
    \sqrt{\left (\frac{1}{\sqrt{d}}-x_1 \right)^2+\dots + \left (\frac{1}{\sqrt{d}}-x_d \right )^2} \leq \sqrt{1 - x_1^2 - x_2^2 -\dots - x_d^2} = \sqrt{1 - \rho^2},
    \]
    where we use $0 \leq x_i \leq 1/\sqrt{d}$ and $(a-b)^2 \leq a^2 - b^2$ for $0 < b < a$.

    Now recall that $\St$ is a full tree and thus it consists of $2 \cdot 2^d-3$ line segments. Since a segment intersects a sphere in at most two points, we have
    \[
    \# (\St \cap \partial B_\rho(o)) \leq 4 \cdot (2^d - 1.5).
    \]
    Apply Lemma~\ref{lm:lengthbound} with $X = \St \setminus B_\rho(o)$ and $Y$ being the union of all segments connecting points $\St \cap \partial     B_\rho(o)$ with a nearest vertex of a hypercube. The previous bounds give 
    \[
    \H(X) \leq \H(Y) \leq 4 (2^d - 1.5) \sqrt{1 - \rho^2} < 4 (2^d - 1) \sqrt{1 - \rho^2}.
    \]
    Thus
    \[
    \H(\St \cap B_\rho(o)) = \H(\St) - \H(X) \geq \frac{2(2^d-1)}{\sqrt{3d}} - 4 \cdot (2^d - 1) \sqrt{1 - \rho^2} \geq \frac{2(2^d-1)}{\sqrt{3d}} \left ( 1 - 2\sqrt{3d(1-\rho^2)}\right).
    \]
\end{proof}

\subsection{Upper bound} \label{subsec:mainproofs}

\begin{proof}[Proof of Theorem~\ref{th:main}]
    
By rescaling, put $s = 1$.
Define 
\[
t_r :=  \# (\partial B_r(x) \cap \St),
\]
\[
L_r := \mathcal{H}^1(B_r(x) \cap \St).
\]
By Corollary~\ref{cor:everyeps}, for every $r<1$ the set $\St \cap B_r(x)$ consists of a finite number of segments, so $t_r$ is finite and the quantity $L_r$ is continuous as a function of $r$ for $r \in [0,1)$.

By Theorem~\ref{th:tpoints} there exists a set $S$ connecting $\partial B_r(x) \cap \St$ with length at most $12r t_r^\frac{d-2}{d-1}$, so by Lemma~\ref{lm:lengthbound} for $X = B_r(x) \cap \St$ and $Y = S$, we have
\[
L_r \leq 12r t_r^\frac{d-2}{d-1},
\]
because $\St \setminus (B_r(x) \cap \St) \cup S$ is connected.
Thus
\[
t_r \geq  \frac{L_r^{\frac{d-1}{d-2}}}{144}
\]
for every $r \in [\rho,1)$. Lemma~\ref{lm:coarea} implies 
\begin{equation} \label{ineq:main}
L_R \geq L_\rho + \int_\rho^R t_r \,dr 
   \geq L_\rho + \frac{1}{144} \int_\rho^R L_r^{\frac{d-1}{d-2}} dr    
\end{equation}

Let $f(r)$ be the smallest function satisfying the inequality~\eqref{ineq:main}. Then
\[
f(z) = a + b \int_\rho^z f(y)^\alpha \, dy, 
\quad a = L_\rho, \quad b = \frac{1}{144}, \quad \alpha = 1 + \frac{1}{d-2}.
\]
Taking the derivative we obtain
\[
f'(z) = b \cdot f(z)^\alpha, \quad f(\rho) = a.
\]
Let us solve this differential equation:
\[
\frac{df}{dz} = b f^\alpha, \quad f^{-\alpha} df = b dz, \quad \frac{f^{1-\alpha}}{1-\alpha} = bz + C.
\]
So,
\[
f = \left ( C - \frac{bz}{d-2} \right )^{-(d-2)}
\]
and
\[
f(\rho) = a =  L_\rho = \left (C - \frac{b\rho}{d-2}  \right )^{-(d-2)}
\]
Thus,
\[
C = L_\rho^{-\tfrac{1}{d-2}} + \frac{b\rho}{d-2}
\]

If $C = \frac{bz}{d-2}$ for some $z \in (\rho,1)$ then $f$ and the length $L_R$ are unbounded, which is a contradiction.
Thus 
\[
z = \frac{C(d-2)}{b} \geq 1,
\]
which is equivalent to
\[
L_\rho \leq \left ( \frac{144(d-2)}{1-\rho} \right) ^{d-2}.
\]
After rescaling we obtain the desired bound.

\end{proof}

\begin{proof}[Proof of Corollary~\ref{cor:main}]
   Again, by rescaling, put $s = 1$ and choose $q$ in the interval $(\rho,1)$. Let $t$ be a minimum value of 
   \[
   \# ( \partial B_r(x) \cap \St)
   \]
   for $r \in [\rho,q]$; it exists because by Corollary~\ref{cor:everyeps} the set $\overline{B_q(x) \cap \St}$ consists of a finite number of line segments.
   By Lemma~\ref{lm:coarea} and Theorem~\ref{th:main} 
   \[
    (q-\rho) t \leq  \H (B_q(x) \cap \St) \leq   \left ( \frac{144d}{1-q} \right) ^{d-2},
   \]
   so
   \[
   t \leq \frac{144d^{d-2}}{(q-\rho)(1-q)^{d-2}}.
   \]
   Optimization over $q$ gives $q := \frac{1 + (d-2)\rho}{d-1}$ which implies
   \[
   t \leq \frac{144d^{d-2}(d-1)^{d-1}}{(1-\rho)^{d-1}(d-2)^{d-2}} < \frac{1}{2} \frac{144d^{d-1}}{(1-\rho)^{d-1}}. 
   \]
Let $u \in [\rho,q]$ be a radius for which $\# (\partial B_u(x) \cap \St) = t$. 
   The Steiner set $\St$ has no terminals inside $B_u(x)$, so the number of segments in the forest $\St \cap B_u(x)$ is at most
   \[
   2t-3 \leq \frac{144d^{d-1}}{(1-\rho)^{d-1}}.
   \]
   Convexity of the ball $B_\rho(x)$ implies that every such segment intersects $B_\rho(x)$ by a (possibly empty) segment.
\end{proof}

\begin{proof}[Proof of Corollary~\ref{cor:inNotationofPaoSte}]
  Since $x \in \St_\varepsilon$ we have
  \[
  B_\varepsilon (x) \cap \mathcal{A} = \emptyset,
  \]
  so we may apply Theorem~\ref{th:main} to $\St$, $x$ and $s = \varepsilon$.
  Then by the inclusion $\St_\varepsilon \subset \St$ we have
  \[
  \H (\St_\varepsilon \cap B_{\rho \varepsilon}(x) ) \leq \H (\St \cap B_{\rho \varepsilon}(x) )  \leq \varepsilon \left ( \frac{144d}{1-\rho} \right) ^{d-2}.
  \]
  The same inclusion allows us to use the bound from Corollary~\ref{cor:main} for the set $\St_\varepsilon \cap B_{\rho \varepsilon}(x)$.
\end{proof}

\subsection{Planar case} \label{subsec:planar}

The following theorem shows that the example from Section 3 is, in some sense, the worst possible in the plane.

\begin{theorem} \label{th:4pointsmax}
    Let $\mathcal{A}$ be a totally disconnected set lying on a circle $\omega \subset \mathbb{R}^2$.
    Suppose that a solution $\St$ to the Steiner problem for $\mathcal{A}$ is a full tree.
    Then $\mathcal{A}$ has at most 4 accumulation points.
\end{theorem}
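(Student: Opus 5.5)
Since $\St$ is full, every terminal $a\in\mathcal{A}$ has degree~1, and the edge at $a$ makes an angle $\beta_a$ with the radius $[oa]$. The plan is to show that accumulation points of $\mathcal{A}$ force $\cos\beta_a$ to be small, i.e.\ the edges hit $\omega$ almost tangentially (angle close to $\pi/3$ with the circle or smaller). We then run a winding or rotation count, modelled on Corollary~\ref{cor:windrose}, to show that only four such ``tangential clusters'' can exist.

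\textbf{Step 1 (local structure near an accumulation point).} Let $p$ be an accumulation point of $\mathcal{A}$ and choose a small ball $B_\eta(p)$. Since $\mathcal{A}$ is totally disconnected, there are terminals of $\mathcal{A}$ arbitrarily close to $p$ separated by gaps. Using Corollary~\ref{cor:everyeps} and the finiteness of $\H(\St)$, most of the length near $p$ is carried by finitely many segments. I would argue that the terminal edges at $a\to p$ must have directions converging to the tangent directions of $\omega$ at $p$, up to an angle of $\pi/3$. The reason is that two nearby terminals $a,a'$ on $\omega$ whose edges are roughly radial would meet at a branching point at distance $\gg |aa'|$, contradicting Lemma~\ref{lm:lengthbound} (replace the two long edges by $[aa']$ plus one edge). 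More precisely, for a cherry of two pendant terminals I would apply Melzak's reduction (Lemma~\ref{lm:Melzak}): the branching point must lie in the lens where $\angle a q a' = 2\pi/3$, so its distance to $\omega$ is $O(|aa'|)$. Iterating this over the tree near $p$, the edge directions should stabilise to one of the two directions making angle $\pi/3$ with the tangent line, exactly as in the construction of Theorem~\ref{th:example}.

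\textbf{Step 2 (direction bookkeeping).} Every edge of $\St$ is parallel to one of three directions locally, and these directions are transported rigidly along the tree through branching points. Near each accumulation point the edges form a ``comb'', as in Figure~\ref{pic:4topology}, whose spine has a fixed direction $e_p$ making angle $\pi/3$ with $\omega$ at $p$, with the teeth alternating between the other two $2\pi/3$-rotated directions. Since $\St$ is a full tree with countably many branching points and no loops, I expect a single global hexagonal direction system to govern all combs, so each comb direction $e_p$ lies in the fixed set $\{e^{ik\pi/3}\}$. Consequently the tangent line at $p$ makes angle $\pi/3$ with one of three fixed lines. Each fixed line gives exactly four points of $\omega$ where the tangent forms angle $\pi/3$ with it, so this alone only bounds the number by $12$.

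\textbf{Step 3 (reducing to 4).} Here I would use the Maxwell-type identity (Lemma~\ref{lm:Maxwell}, Corollary~\ref{cor:maxwellCocircular}) on finite truncations $\St\cap \overline{B_{1-\varepsilon}(o)}$, which is possible by Corollary~\ref{cor:everyeps}, together with Corollary~\ref{cor:5pointsaremany}. The latter says the complement of $\mathcal{A}$ must have two arcs of length at least $r$, so the terminals cannot be too spread out. At a comb, the spine must point inward and the orientation must alternate consistently around the circle. A cyclic ordering argument (planarity of the embedded tree) then shows that consecutive accumulation points along $\omega$ must have spines rotated by a definite amount. Combined with Corollary~\ref{cor:windrose} (the unit outward directions sum to zero), this should allow at most four clusters.

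The main obstacle is Step~1/2: rigorously deriving the asymptotic comb structure and the global direction constraint from the infinite tree, where only local finiteness (Corollary~\ref{cor:everyeps}) is available. I would first try to handle it by approximating with the finite trees $\St\cap\overline{B_{1-\varepsilon}(o)}$ and passing to limits.
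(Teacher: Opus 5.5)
There is a genuine gap: the proposal never finds the mechanism that localizes the accumulation points, and the reduction to four is not actually argued. That all edges of $\St$ lie in three fixed directions is immediate from fullness ($\St\setminus\mathcal A$ is connected and locally a segment or a regular tripod), so that is not the obstacle.

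The missing idea is a global length identity. By Corollary~\ref{cor:everyeps} each truncation $\St\cap\overline{B_{1-\varepsilon}(o)}$ is a finite full tree with leaves on a circle. So Corollary~\ref{cor:maxwellCocircular} gives $\H(\St\cap\overline{B_{1-\varepsilon}(o)})=(1-\varepsilon)\sum_k\cos\alpha_k$ with all summands positive, and this is at most $2\pi$ by Corollary~\ref{cor:cocircularlength}(i). Near any point of $\omega$ at which none of the three directions is tangent, $\cos\alpha_k$ is bounded below, so only boundedly many leaves lie there. Hence accumulation points can only be the six points where one of the three directions is tangent to $\omega$.

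Your Step~1 uses a local cherry/Melzak argument instead. It cannot control infinitely many terminals, and it reaches the wrong conclusion. Terminal edges meeting $\omega$ at an angle of about $\pi/3$ have $\cos\beta_a\approx\sqrt3/2$, and infinitely many of them would make the truncated lengths unbounded. The accumulating terminal edges must instead become tangent, as the edges to $z(x)_j$ do in Theorem~\ref{th:example}. Your count of $12$ is also a double count. Since the three lines are $\pi/3$ apart, ``the tangent makes angle $\pi/3$ with one of them'' is the same as ``the tangent is one of them'', which gives six points, not twelve.

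Step~3 contains no argument (``cyclic ordering'', ``spines rotated by a definite amount''), and Corollary~\ref{cor:windrose} cannot be applied to the infinite tree itself. Two ingredients are needed to go from six to four.

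First, every accumulation point is approached from both arcs of $\omega$. Apply Corollary~\ref{cor:windrose} to a finite component of $\St$ in an annulus $\overline{B_{1-\delta}(o)}\setminus B_{1-\varepsilon}(o)$ with $\delta<\varepsilon$. Near, say, $x=i$, all but finitely many of its leaves on $\partial B_{1-\delta}(o)$ have outer direction $\pm1$; the rest are finitely many by the length bound above. The value $+1$ occurs on one side of $x$ and $-1$ on the other. The sum is bounded independently of $\delta$, while the number of such leaves grows without bound as $\delta\to0$, so both signs must occur infinitely often.

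Second, the six candidate points are the vertices of a regular hexagon whose side equals the radius. So five two-sided accumulation points leave at most one component of $\omega\setminus\mathcal A$ of diameter greater than $1-\delta$, for some $\delta>0$. Corollary~\ref{cor:5pointsaremany} then says $\St$ has no Steiner points, contradicting fullness. You cite Corollary~\ref{cor:5pointsaremany} but never connect it to this hexagon geometry, and the two-sidedness statement is absent from your plan.
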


Obviously, Theorem~\ref{th:4pointsmax} implies that the set of branching points of $\St$ has at most (the same) 4 accumulation points.

\begin{proof}
   Without loss of generality, let $\omega$ be a unit circle centered at the origin. 
   Since $\St$ is a full tree, the edges of $\St$ have only 3 possible directions; let us rotate the picture so that one of these directions becomes the real axis.

   Now consider $\St_\varepsilon = \St \cap \overline{B_{1-\varepsilon} (o)}$, where $o$ is the origin and $\varepsilon > 0$.
   By Corollary~\ref{cor:everyeps} $\St_\varepsilon$ is a finite tree; let $\{p_1, \dots, p_n\} = \St \cap \partial B_{1-\varepsilon}(o)$. 
   Corollary~\ref{cor:maxwellCocircular} and rescaling give that $\H(\St_\varepsilon)$ is the sum of $(1-\varepsilon)\cos \alpha_k$, where $\alpha_k$ is the angle between the radius $[op_k]$ of $B_{1-\varepsilon}(o)$ and $\St_\varepsilon$ at the point $p_k$. Clearly, $\alpha_k$ belongs to $[0,\pi/2)$, so every summand is positive.

   Now consider an accumulation point $x \in \mathcal A$. If $x^6 \neq -1$ (here $x$ is considered as a complex number), then for a small enough $\delta > 0$ there exists $q_\delta > 0$ such that for every $y \in B_\delta(x)$ one has $\cos \alpha > q_\delta$, 
   where $\alpha$ is an angle between a segment with a possible direction of $\St$ passing through $y$ and the radius $[oy]$ of the circle $\partial B_{|y|}(o)$ containing $y$ and centered at the origin. 
   By Corollary~\ref{cor:cocircularlength}(i), $\H(\St) \leq 2\pi$.
   This means that for every $\varepsilon > 0$ the set $\St_\varepsilon$ has at most $2\pi / q_\delta$ leaves inside $B_\delta(x)$, so $x$ can be an accumulation point only if $x^6 = -1$.

   To reduce the upper bound from 6 to 4 we need the following lemma.

   \begin{lemma} \label{lm:emelevskaya}
       If $x \in \omega$ is an accumulation point of $\mathcal A$, then it is an accumulation point from both sides.
       Formally, for every $\varepsilon > 0$ every connected component of $(\omega \cap B_\varepsilon (x) \setminus \{x\})$ has a nonempty intersection with $\mathcal{A}$.
   \end{lemma}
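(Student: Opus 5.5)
We argue by contradiction. Suppose $x\in\omega$ is an accumulation point of $\mathcal A$ and, for some $\varepsilon>0$, one of the two arcs of $\omega\cap B_\varepsilon(x)\setminus\{x\}$, say $\gamma$, contains no point of $\mathcal A$. Then $\mathcal A$ accumulates at $x$ only from the other side, along the arc $\gamma'$. From the previous step of the proof of Theorem~\ref{th:4pointsmax} we know $x^6=-1$. So one of the three admissible edge directions of $\St$ is tangent to $\omega$ at $x$, and edges of $\St$ near $x$ meet the radii at angles $\alpha$ close to $\pi/2$.

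The plan is to exhibit a local competitor and invoke Lemma~\ref{lm:lengthbound}. Near $x$ the tree has infinitely many leaves (on $\gamma'$) and branching points accumulating at $x$. Since edges only take three directions, I will inspect the branch structure near $x$ as in the construction of Theorem~\ref{th:example}. Edges near $x$ come in two kinds:
\begin{itemize}
\item edges parallel to the tangent at $x$ (direction $\pm ix$);
\item edges at angle $\pi/3$ to it, which hit $\omega$ transversally.
\end{itemize}

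The key geometric point is as follows. Consider a terminal $a\in\mathcal A\cap\gamma'$ close to $x$, hence reached from the side of $\gamma'$. Full minimality forces the edge at $a$ to make an angle of at least $2\pi/3$ with any other structure arriving at $a$. Moreover, $\St\subset\conv\mathcal A$ by Lemma~\ref{lm:conv}, and there are no terminals on $\gamma$. Together these should force the tangential edges near $x$ to point towards $\gamma'$, that is, away from the empty side.

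I then expect a contradiction with the sign of the Maxwell summands. Use the same truncation $\St_\varepsilon=\St\cap\overline{B_{1-\varepsilon}(o)}$ and Corollary~\ref{cor:maxwellCocircular}. The leaves of $\St_\varepsilon$ near $x$ must all lie on one side, and their directions $c_k$ are nearly the same tangential vector. Corollary~\ref{cor:windrose} ($\sum \bar c_k=0$) applied to the subtree of $\St_\varepsilon$ cut off near $x$ should then fail. Concretely:
\begin{enumerate}
\item Take a small ball $U$ around $x$ and consider a component $K$ of $\St_\varepsilon\cap U$ carrying many leaves.
\item Its boundary points on $\partial U$ are few, since $\H(\St)\le 2\pi$ and there are finitely many of them by Corollary~\ref{cor:everyeps}.
\item The leaves on $\partial B_{1-\varepsilon}$ have direction sum tending to a large multiple of the single tangent direction, because they are one-sided.
\item This cannot be balanced by the boundedly many exits on $\partial U$.
\end{enumerate}

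An alternative route, if the direction count is clumsy, is Melzak's reduction (Lemma~\ref{lm:Melzak}) applied to the outermost pair of leaves on the side of $\gamma'$ nearest $x$. This replaces them by a point outside $\omega$, towards the empty arc $\gamma$. Repeating the reduction produces an edge that crosses the empty arc with a strictly obtuse configuration, and so a shortening.

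The main obstacle is the third step: turning one-sidedness into a quantitative imbalance. Leaves near $x$ could alternate between the two tangent-ish directions $e^{\pm i\pi/3}$ relative to $ix$, and on one side of a tangent point the Steiner edges could be oriented either way. To exclude that, I would try first to show the following. On the side $\gamma'$, an edge arriving at a terminal with the orientation pointing towards $\gamma$ must cross $\omega$ again inside $B_\varepsilon(x)$, since it is nearly tangent and $\omega$ is curved. That would contradict $\St\subset\conv\mathcal A\subset\overline{B_1(o)}$ combined with the absence of terminals on $\gamma$. Hence all orientations agree, and the imbalance in Corollary~\ref{cor:windrose} follows.
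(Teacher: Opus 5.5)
There is a genuine gap. Your overall mechanism is the one the paper uses: a windrose balance (Corollary~\ref{cor:windrose}) for a finite truncated piece of $\St$ near $x$ with boundedly many exits. But step~3 is not established, and your proposed repair targets the wrong thing.

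The argument rests on the sentence ``the leaves of $\St_\varepsilon$ near $x$ must all lie on one side'', which is never justified. These leaves are the points of $\St\cap\partial B_{1-\varepsilon}(o)$, not terminals. Between this circle and $\omega$ the tree branches infinitely often near $x$, so the absence of terminals on $\gamma$ does not by itself keep leaves off the $\gamma$-side.

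The orientation issue you isolate is not an independent obstacle. At a leaf $p$ the edge must leave the disk, so $\Re(c_k\bar p)>0$. Hence a tangential direction at a leaf has its sign dictated by the side of $x$ on which $p$ lies, and ``all orientations agree'' is just a restatement of one-sidedness of the leaves. Your convexity argument concerns edges ending at terminals of $\gamma'$ and says nothing about edges through leaves of the truncation. As phrased, it also ignores that a nearly tangential edge may stop at a branching point before meeting $\omega$ again. Finally, step~3 ignores leaves with the two transversal directions, so the sum is not ``a large multiple of the single tangent direction'' as claimed.

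The paper does not argue by contradiction here. With $x=i$, the leaf directions near $x$ lie in $\{1,-1,e^{i\pi/3},e^{2i\pi/3}\}$, with $+1$ only to the right of $x$ and $-1$ only to the left.
\begin{itemize}
\item The two transversal directions both have imaginary part $\sqrt3/2$, so the imaginary part of the windrose identity bounds their number by the number of exits.
\item The real part then forces the numbers of $+1$ and $-1$ leaves to differ by a bounded amount.
\item The total number of leaves near $x$ on $\partial B_{1-\delta}(o)$ tends to infinity as $\delta\to0$, so both counts do. Thus leaves occur on both sides of $x$, and the paper then reads off terminals on both arcs.
\end{itemize}
Your one-sidedness claim is exactly the contrapositive of this last passage from leaves to terminals. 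It is the step you would have to supply, and it does not follow from Lemma~\ref{lm:conv} alone.

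Two smaller points. The uniform bound on exits in step~2 should come from choosing the radius of $U$ via Lemma~\ref{lm:coarea}, so that $\#(\St\cap\partial U)<\infty$ independently of the truncation; Corollary~\ref{cor:everyeps} gives finiteness only for each fixed truncation. The Melzak alternative is not usable as stated, since Lemma~\ref{lm:Melzak} applies to finite full locally minimal trees and the claimed shortening is not specified.
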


   \begin{proof}
       Recall that $x^6 = -1$ provided that $\St$ contains a segment parallel to the real axis, so without loss of generality let $x = e^{\pi i/2}$.
       Consider a connected component $S$ of $\St \cap R(1,1-\varepsilon)$, where $R(1,1-\varepsilon)$ is the closed ring between $\omega = B_1(o)$ and $B_{1-\varepsilon}(o)$, such that $S$ has an infinite number of points at $\omega$. Such a component exists since $\St_\varepsilon$ is a finite graph, $\mathcal{A}$ is infinite and $\St$ is full. 

       Then consider a connected component $S_\delta$ of the intersection of the ring $R(1-\delta,1-\varepsilon)$ with $S$ for some $\delta < \varepsilon$. 
       Corollary~\ref{cor:windrose} applied to $S_\delta$ gives that the absolute value of the sum of $\bar{c_k}$ over vertices from $\partial B_{1-\delta}(o)$ is bounded by $\# (\partial B_{1-\varepsilon}(o) \cap S)$. Note that the coefficients $\bar{c_k}$ that correspond to the terminals belonging to a sufficiently small $\rho$-neighborhood of $x$ have to lie in the consecutive subset $\{\pm 1, e^{\pi i/3}, e^{2\pi i/3}\}$ of the possible set of 6-th roots of unity.
       Since the sum is bounded, all the summands, except a finite number $C_\varepsilon$ are either $-1$ or 1, which are produced by vertices from different arcs of 
       $(B_{1-\delta}(o) \cap B_\rho (x) \setminus \{x\})$. But as $\delta$ tends to 0, the number of summands of both types tends to infinity, so there are infinitely many points on both arcs of $(\omega \cap B_\rho (x) \setminus \{x\})$. 
   \end{proof}
   
   Suppose that $\mathcal{A}$ has at least 5 accumulation points. Then they must be five consecutive points, satisfying $x^6 = -1$. By Lemma~\ref{lm:emelevskaya} at most one component of $\omega \setminus \mathcal{A}$ has diameter more than $1 - \delta$ for some positive $\delta$. 
   Corollary~\ref{cor:5pointsaremany} completes the proof by contradiction with the fact that $\St$ is full.  

\end{proof}

\begin{remark}
    We cannot avoid this epsilondeltamanship because there is no analog of Corollary~\ref{cor:windrose} for infinite trees. Indeed, an infinite sum of numbers on the complex unit circle does not necessarily converge. Also, in the proof of Lemma~\ref{lm:Maxwell} it is possible to replace the summation over edges by the summation over vertices only if the sum of vertices (as complex numbers) converges absolutely. Again, this is not the case for terminals located on the complex unit circle.
\end{remark}

\begin{lemma} \label{lm:squarerootoftree}
    The length of a full Steiner tree with at least three terminals on a unit circle is at least $\sqrt{3}$.
\end{lemma}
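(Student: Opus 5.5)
The plan is to reduce the length bound to a statement about how spread out the terminals are. A Steiner tree is connected and contains any two of its terminals $a,a'$, so $\H(\St)\ge |aa'|$. A chord of the unit circle subtending a central angle $\theta$ has length $2\sin(\theta/2)$, which equals $\sqrt3$ at $\theta=2\pi/3$. It therefore suffices to prove the following.

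\emph{Claim.} The terminals of a full Steiner tree $\St$ on $\omega$ with at least three terminals do not all lie in an open arc of angular size $2\pi/3$. In particular, two terminals lie at angular distance at least $2\pi/3$, i.e. at distance at least $\sqrt3$.

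If $\mathcal A$ is infinite, I first pass to a finite subtree. I truncate $\St$ to $\St\cap\overline{B_{1-\varepsilon}(o)}$, a finite tree by Corollary~\ref{cor:everyeps}, as in the proof of Corollary~\ref{cor:5pointsaremany}, and let $\varepsilon\to0$. This should reduce the Claim to the finite case, up to a limiting argument for arcs.

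To prove the Claim, suppose all terminals lie in an arc $\gamma$ of angular size less than $2\pi/3$, and let $\ell$ be the chord spanning $\gamma$. By Lemma~\ref{lm:conv}, $\St$ lies in the circular segment $D$ bounded by $\gamma$ and $\ell$. I would use two angle facts.
\begin{itemize}
\item[(i)] At each Steiner point $b\in D$, the three edges leave $b$ in directions $120^\circ$ apart. One of them points into the closed half-plane on the side of $\ell$ away from $\gamma$, or at least makes an angle at most $60^\circ$ with the inward normal of $\ell$.
\item[(ii)] Every edge of a full tree is a straight segment, and every path in the tree turns by exactly $\pm 60^\circ$ at each Steiner point.
\end{itemize}
Start at any Steiner point and always follow the edge pointing most towards $\ell$. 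Since the tree is finite and acyclic, this walk must end at a terminal $a\in\gamma$. Its last edge arrives at $a$ from the $\ell$-side with direction within $60^\circ$ of the normal to $\ell$.

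The contradiction should then come from comparing this arrival direction with the circle. The segment $D$ is thin: its half-angle is less than $\pi/3$, so the tangent lines to $\omega$ along $\gamma$ make angles less than $\pi/3$ with $\ell$. Near the endpoints of $\gamma$, an edge arriving at $a$ within $60^\circ$ of the normal to $\ell$ would have to exit $D$, contradicting Lemma~\ref{lm:conv}. Alternatively, the angle at $a$ between the incoming edge and the chords to the two neighbouring terminals would force a shortening, contradicting minimality via Lemma~\ref{lm:lengthbound}.

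The main obstacle is making this last step rigorous. The simple ``steepest walk'' might end at a terminal in the middle of $\gamma$, where no contradiction is apparent. My first attempt would therefore be to run the walk so that it ends at an \emph{extreme} terminal of $\gamma$. Starting from the Steiner point adjacent to an endpoint of $\gamma$, I would use the planar Melzak reduction (Lemma~\ref{lm:Melzak}) to shrink the tree to a single edge. All Melzak points then lie outside $\omega$ on the far side. I would then check that the final edge cannot fit inside $D$ when the angular size of $\gamma$ is less than $2\pi/3$.

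If this fails, my fallback is the Maxwell-type formula (Corollary~\ref{cor:maxwellCocircular}) together with Corollary~\ref{cor:windrose}. These give $\H(\St)=\sum\cos\beta_i$ with $\sum \bar c_i=0$. Since the directions $\bar c_i$ take only three or six values and must sum to zero, the terminal directions are forced apart, and from this I would derive the arc spread directly.
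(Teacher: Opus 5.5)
There is a genuine gap. Your reduction is fine, and it matches the paper's endgame: it suffices to find two terminals at distance at least $\sqrt3$, since $\H(\St)\ge|aa'|$. But the Claim, which carries all the content, is never proved.

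You concede that the steepest walk can end at an interior terminal of $\gamma$, where nothing contradicts anything. Neither fallback closes this.
\begin{itemize}
\item \textbf{Maxwell route.} Direction counting alone cannot work. Take terminals at angles $-\pi/4,0,\pi/4$ with outer directions $e^{-2\pi i/3},1,e^{2\pi i/3}$. They satisfy Corollary~\ref{cor:windrose}, every $\beta_i<\pi/2$, yet they lie in an arc of size $\pi/2$. No such tree exists: the three inward lines meet outside the disc, beyond the middle terminal. The formula is a necessary identity and cannot detect this.
\item \textbf{Melzak route, as stated.} Lemma~\ref{lm:Melzak} needs a cherry, i.e.\ a Steiner point adjacent to two terminals, not the Steiner point next to an endpoint of $\gamma$. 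Once you reduce, the new terminal lies off $\omega$. The reduced trees are no longer Steiner sets for anything, so Lemma~\ref{lm:conv} says nothing about them. Hence ``the final edge cannot fit inside $D$'' is not a contradiction, and ``all Melzak points lie outside $\omega$'' is unjustified after the first step.
\end{itemize}

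The missing device, which the paper uses, is to \emph{truncate back to the circle after every reduction}.
\begin{itemize}
\item A full tree with at least four terminals has two cherries, and their terminal-free arcs $\breve{p_1p_2}$ are disjoint. So one of these arcs has measure at most $\pi<4\pi/3$.
\item For that cherry, the Melzak point $p$ lies outside the circle, and the prolonged edge $[qp]$ leaves the disc through the free arc. Cut it at the circle.
\item The result is a full locally minimal tree with one terminal fewer, all terminals still cocircular, and length no larger.
\item Iterating, you reach a regular tripod on three cocircular points. A full tripod forces every angle of the terminal triangle to be $<2\pi/3$, hence its diameter is at least $\sqrt3 r$. Together with $\H\ge\diam$ this gives the bound.
\end{itemize}
The paper does this on $\partial B_{1-\varepsilon}(o)$ and argues by contradiction on the length, which also handles infinite $\mathcal A$.

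The same induction also proves your arc Claim directly. The cut point lies on the free arc between $p_1$ and $p_2$. Choose the cherry whose free arc is not the big gap complementary to $\gamma$; then all terminals stay in $\gamma$. You end with three terminals in an arc of size $<2\pi/3$, whose inscribed angle at the middle terminal exceeds $2\pi/3$. This contradicts fullness.
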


\begin{proof}
    Suppose the contrary. Then there is a full Steiner tree $\St$ with the length at most $\sqrt{3}(1 - \varepsilon)$, which means that a finite full tree $S_\varepsilon$ inside $B_{1-\varepsilon}(o)$ also has length at most $\sqrt{3}(1 - \varepsilon)$.
    We may assume that $S_\varepsilon$ has at least 3 terminals, otherwise take smaller $\varepsilon$.
    
    So $S_\varepsilon$ is a locally minimal tree and one can apply Lemma~\ref{lm:Melzak} (Melzak reduction).
    By the pigeon-hole principle there are at least two choices of $p_1,p_2$ and $q$, where $p_1$ and $p_2$ are terminals adjacent to the branching point $q$. Since $S_\varepsilon$ is a tree exactly one of the arcs $\breve{p_1p_2}$ does not contain any other terminals. The interiors of these arcs for two choices are disjoint, so one of them has angular measure smaller than $4\pi/3$, and so the Melzak point $p$ lies outside the circle $\partial B_{1-\varepsilon}(o)$. Also, this arc $\breve{p_1p_2}$ belongs to the quadrilateral $pp_1qp_2$ and so Melzak reduction in this case produces a tree $S'_\varepsilon$. 
    Note that $S'_\varepsilon \cap B_{1-\varepsilon}(o)$ is a full tree with the length at most $\sqrt{3}(1 - \varepsilon)$, so we can repeat the reduction until we obtain a tree with 3 terminals $a_1$, $a_2$ and $a_3$.
    If the diameter of $\{a_1,a_2,a_3\}$ is at most $\sqrt{3}(1 - \varepsilon)$ then they can be covered by an arc $R$ of the angular measure $2\pi/3$; this gives that one of the angles, say $\angle a_1a_2a_3$, is at least $2\pi/3$.    
    This contradicts the existence of a full tree (in this case it is a regular tripod) with terminals $a_1$, $a_2$ and $a_3$.
    Finally, the length of any connected set containing $\{a_1,a_2,a_3\}$ is at least $\diam \{a_1,a_2,a_3\}$.
\end{proof}

\begin{corollary} \label{cor:2components}
    Let $\St$ be a Steiner tree for $\mathcal{A} \subset \mathbb{R}^2$ with $\H(\St) < \infty$.
    Assume that for some $s > 0$, the open ball $B_s(x)$ has empty intersection with $\mathcal{A}$. 
    Then at most two connected components of $\St \cap B_s(x)$ have branching points.
\end{corollary}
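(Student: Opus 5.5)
The plan is to argue by contradiction via a length comparison: three branched components inside $B_s(x)$ would together be too long to fit alongside the circle-replacement bound. Rescale so that $s=1$ and $x=o$. Let $F_1,F_2,F_3$ be three distinct connected components of $\St \cap B_1(o)$ that contain branching points.

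First I will reduce to components that are full trees with terminals on a circle. Because $B_1(o)\cap\mathcal{A}=\emptyset$, Corollary~\ref{cor:everyeps} shows that for each $\varepsilon>0$ the set $\St\cap\overline{B_{1-\varepsilon}(o)}$ is a finite union of segments. Fix $\varepsilon$ small enough that each $F_i$ still contains a branching point inside $B_{1-\varepsilon}(o)$. The component of $F_i\cap\overline{B_{1-\varepsilon}(o)}$ through that branching point is then a locally minimal finite tree with at least three terminals on $\partial B_{1-\varepsilon}(o)$. By Lemma~\ref{lm:lengthbound} it is a Steiner tree for its terminals.

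\emph{Fullness.} A terminal on the circle cannot have degree $2$ or $3$, since any two chords from it make an angle smaller than $2\pi/3$. If an interior segment leaves through a terminal, I split the tree there. At least one of the resulting pieces contains a branching point. That piece is a full Steiner tree with at least three cocircular terminals, because the branching point has three edges, each ending at distinct terminals.

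\emph{Length bound from Lemma~\ref{lm:squarerootoftree}.} After rescaling, each such piece has length at least $\sqrt3(1-\varepsilon)$. Summing over the three components gives
\[
\H(\St\cap B_1(o)) \ge 3\sqrt3(1-\varepsilon) + (\text{length of the remaining parts}).
\]
This alone is below $2\pi$, so Corollary~\ref{cor:cocircularlength}(i) is not enough by itself. I will instead use the sharper part, Corollary~\ref{cor:cocircularlength}(ii), together with its arc structure. By Corollary~\ref{cor:5pointsaremany}, each branched component leaves at least two arcs of diameter $\ge r$ free of its own terminals. The competitor built in (ii) then gives
\[
\sum_i \H(F_i) \le \tfrac{4\pi}{3}+1 \approx 5.188 < 3\sqrt3 \approx 5.196.
\]
Letting $\varepsilon\to0$ produces the contradiction. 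This margin is strikingly narrow, which strongly suggests it is the intended route.

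The main obstacle is making the lower bound honest. Lemma~\ref{lm:squarerootoftree} applies to a full tree with terminals on the circle, whereas each $F_i$ meets the sphere in a general way and may not be full. So the key step is the splitting at degree-$1$ boundary points and extracting a full subtree, with at least three terminals on a slightly smaller circle, inside each $F_i$. I also need to verify that the rescaled bound $\sqrt3(1-\varepsilon)$ and the upper bound from (ii), applied on the same radius $1-\varepsilon$, are compared consistently. If the pieces extracted this way are not full, the fallback is to apply Melzak-type reasoning directly, as in the proof of Lemma~\ref{lm:squarerootoftree}, to the branched part.
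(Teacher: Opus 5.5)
Your proposal is correct and follows the paper's proof. That proof is exactly the comparison of the lower bound $3\sqrt3$, obtained from Lemma~\ref{lm:squarerootoftree} applied to each of three branched components, with the upper bound $4\pi/3+1$ from Corollary~\ref{cor:cocircularlength}(ii). The paper applies the lemma directly to the components, so your passage to $\overline{B_{1-\varepsilon}(o)}$ just repeats the truncation already carried out inside that lemma's proof.

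One side remark is false as stated: two chords from a point of a circle can meet at any angle less than $\pi$, so fullness does not follow that way. However, a degree-$2$ boundary point of the truncated tree can only be a branching point of $\St$ lying exactly on $\partial B_{1-\varepsilon}(o)$. Your splitting handles this, as would choosing $\varepsilon$ to avoid the countably many radii through branching points.
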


\begin{proof}
   By Lemma~\ref{lm:squarerootoftree}, the presence of three such connected components gives the length of their union to be at least $3\sqrt{3} = 5.1916\dots$, which contradicts the bound from Corollary~\ref{cor:cocircularlength}(ii).
\end{proof}

In particular, Corollary~\ref{cor:2components} and Theorem~\ref{th:4pointsmax} imply that in the plane the set of branching points of $\St \cap B_s(x)$ has at most 8 accumulation points, provided that the terminal set $\mathcal{A}$ is disjoint from $B_s(x)$. This bound does not seem to be tight.

\section{Open questions} \label{sec:open}

Various questions on Steiner trees are collected in~\cite{teplitskaya2025open}.

We want to understand the behavior of the upper bounds in Theorem~\ref{th:main} and Corollary~\ref{cor:main} with respect to $d$ and $1-\rho$.
There is a large gap between the mentioned upper bounds and the lower bounds in the spirit of Lemma~\ref{lm:lowerbound}. 
One of the obstacles is the lack of nontrivial examples in high dimensions, see the discussion in~\cite{fleischmann2025steiner}.

Also, note that the results by Paolini and Stepanov work in a proper metric space $\mathcal X$. 
The quantitative picture in various metric spaces is expected to be diverse, as we can see even in the planar and non-planar Euclidean cases.

Quantitative regularity of other one-dimensional problems is also of interest.
One of the challenging examples is Branched Optimal Transport (Gilbert--Steiner problem).
The Gilbert--Steiner problem is a generalization of the Steiner tree problem and specific optimal mass transportation, which allows the use of additional (branching) point in a transport plan. A specific feature of the problem is that the cost of transporting a mass $m$ along a segment of length $l$ is equal to $m^p \cdot l$ for a fixed $0 \leq p < 1$ and segments may end at points not belonging to the supports of given measures (branching points). 
Our arguments in Theorem~\ref{th:main} and Corollary~\ref{cor:main} can be repeated word for word for the case $p=0$.
For more details, see~\cite{bernot2009optimal}.

\paragraph{Acknowledgments.} The authors are grateful to Ilya Bogdanov for his questions, which led to clarification of the statements.

\paragraph{Funding.} This work started at Summer Scientific Workshop ``Introduction to Modern Mathematics and Related Scientific Projects'' supported by the Bulgarian Ministry of Education and Science, Scientific Programme ``Enhancing the Research Capacity in Mathematical Sciences (PIKOM)'', No. D01-88/20.06.2025.
The work of P. Prozorov was supported by the Russian Science Foundation (project no. 25-11-00058). 
Y. Teplitskaya is supported by the French National Research Agency (ANR) under grant ANR-21-CE40-0013-01 (project GeMfaceT).

\paragraph{Data availability statement.} This manuscript has no associated data.

\paragraph{Conflict of Interest.} The authors have no relevant financial or non-financial interests to disclose.

\bibliography{main}
\bibliographystyle{plain}

\end{document}